\newtheorem{thm}{Theorem}[section]
\newtheorem{prop}[thm]{Proposition}
\newtheorem{lem}[thm]{Lemma}
\newtheorem{cor}[thm]{Corollary}
\newtheorem{remark}{Remark}
\renewcommand{\O}{\Omega}
\newcommand\E{{E}}  
\newcommand\Th{{\mathcal T}_h}
\newcommand\Eh{{\mathcal E}_h}
\newcommand\R{\mathbb{R}}
\newcommand\C{\mathbb{C}}
\newcommand\D{\mathbb{D}}
\renewcommand{\P}{{\mathcal P}}  
\def\decapita#1{}
\def\grecomultibold#1#2{\grecobolddef#1\def\secondobold{#2}%
    \ifx#2\finemultibold\let\next\relax\let\secondobold\relax
    \else\let\next\grecomultibold
    \fi\expandafter\next\secondobold}
\def\grecobolddef#1{%
  \edef\dadef{bf\expandafter\decapita\string#1}%
  \expandafter\def\csname\dadef\endcsname{{\neretto #1}}}
\def\neretto#1{\setbox0=\hbox{\mathsurround=0pt$#1$}%
  \kern.02em\copy0 \kern-\wd0
  \kern-.02em\copy0 \kern-\wd0
  \raise.03em\box0 \kern.02em}
\def\div{\mathop{\rm div}\nolimits}
\def\bdiv{\mathop{\bf div}\nolimits}
\def\curl{\mathop{\rm curl}\nolimits}
\def\bcurl{\mathop{\bf curl}\nolimits}
\def\teps{{\bfvarepsilon}}
\def\Ih{{\mathcal I}_h}
\def\IE{{\mathcal I}_E}
\def\wbox#1;#2;{\vbox{\hrule\hbox{\vrule height#1mm\kern#2mm\vrule
  height#1mm}\hrule}}
\let\phi\varphi
\let\b\b
\newcommand{\bba}   {\mathbf{a}}
\newcommand{\bbc}   {\mathbf{c}}
\newcommand{\bbf}   {\mathbf{f}}
\newcommand{\bbn}   {\mathbf{n}}
\newcommand{\bbp}   {\mathbf{p}}
\newcommand{\bbq}   {\mathbf{q}}
\newcommand{\bbr}   {\mathbf{r}}
\newcommand{\bbu}   {\mathbf{u}}
\newcommand{\bbv}   {\mathbf{v}}
\newcommand{\bbw}   {\mathbf{w}}
\newcommand{\bbx}   {\mathbf{x}}
\newcommand{\bbz}   {\mathbf{z}}
\def\C{\mathbb C}
\def\hpoint#1.#2.#3{{\underline{#1}}_{#2}\cdot
 {\underline{\mathop{\smash{#3}\vphantom{{#1}_{#2}}}}}}
\def\npointp#1.#2{{\underline{#1}}\cdot
 {\underline{\mathop{\smash{#2}\vphantom{{#1}}}}}}
\def\beq{\begin{equation}}
\def\enq{\end{equation}}
\def\bfzero{{\bf 0}}
\author{
E. Artioli\thanks{Department of Civil Engineering and Computer Science,
	University of Rome Tor Vergata,
	Via del Politecnico 1, 00133 Rome, Italy,
	{\tt artioli@ing.uniroma2.it}},
S. {d}e Miranda\thanks{DICAM, University of Bologna, Viale Risorgimento 2, 40136 Bologna, Italy,
	{\tt stefano.demiranda@unibo.it}},
C. Lovadina\thanks{
Dipartimento di Matematica, Universit\`a di Milano, Via Saldini 50, 20133 Milano, 
and IMATI del CNR, Via Ferrata 1, 27100 Pavia, Italy,
{\tt carlo.lovadina@unimi.it}},
L. Patruno\thanks{DICAM, University of Bologna, Viale Risorgimento 2, 40136 Bologna, Italy, {\tt luca.patruno@unibo.it}}
}
\date{}
\title{A Family of Virtual Element Methods for Plane Elasticity Problems Based on the Hellinger-Reissner Principle}
\begin{document}

\maketitle

\begin{abstract}

A family of Virtual Element schemes based on the Hellinger-Reissner variational principle is presented. A convergence and stability analysis is rigorously developed. Numerical tests confirming the theoretical predictions are performed. 
 
\end{abstract}

{
\section{Introduction}

In this paper we extend the study presented in our previous paper \cite{ADLP_HR}. More precisely, we design and study higher-order Virtual Element Methods (VEM) to approximate the solution of linear elasticity problems in 2D. We take the Hellinger-Reissner variational principle (see \cite{Hughes:book} or \cite{Bo-Bre-For}, for instance) as the basis of the discretization procedure. As it is well-known, imposing both the symmetry of the stress tensor and the continuity of the tractions at the inter-element is typically a great source of troubles in the framework of classical Galerkin schemes. For example, when Finite Element Methods are employed, one is essentially led to adopt either cumbersome elements, or to relax the stress symmetry (this latter choice means that the underlying variational principle is changed). The reason for this difficulty stands in the local polynomial approximation that can not easily accomplish for both the symmetry and continuity constraints mentioned above. More details about this issue can be found in \cite{Bo-Bre-For}.
 
As in \cite{ADLP_HR}, we exploit the great flexibility of VEM to present alternative methods, which provide symmetric stresses, continuous tractions and are reasonably cheap with respect to the delivered accuracy. VEMs reach this goal by abandoning the local polynomial approximation concept, a feature originally used to design conforming Galerkin schemes on general polytopal meshes, see \cite{volley}. Recently, this property has been found useful, in certain situations,  for the numerical treatment of internal or regularity constraints, such as incompressibility or inter-element regularity (see \cite{BLV}, \cite{BeiraodaVeiga-Manzini:hkp}).      

We also remark that VEM is experiencing a growing interest towards the applications to Structural Mechanics problems (see \cite{GTP14,BeiraoLovaMora,ABLS_part_I,ABLS_part_II,wriggers,BCP,ANR} and \cite{BeiraodaVeiga-Brezzi-Marini:2013,Brezzi-Marini:2012}, for example). Thus, this paper represents a contribution along that line. 

An outline of the paper is as follows.
In Section \ref{sec:1} we introduce the 2D elasticity problem using the mixed Hellinger-Reissner formulation. Section \ref{s:HR-VEM} details the discrete methods, by describing all the relevant projectors, bilinear and linear forms, together with the VEM approximation spaces. The stability and convergence analysis is  developed in Section \ref{s:theoretical}, while numerical experiments are provided in Section \ref{s:numer}. Concluding considerations are given in Section \ref{s:conclusions}.

Throughout the paper, given two quantities $a$ and $b$, we use the notation $a\lesssim b$ to mean: there exists a constant $C$, independent of the mesh-size, such that $a\leq C\, b$. In addition, given a set $\omega\subseteq \R$ (or $\omega\subseteq \R^2$), we denote with $\P_k(\omega)$ the space of polynomials up to degree $k$ defined on $\omega$. Moreover, we use standard notations for Sobolev spaces, norms and semi-norms (cf. \cite{Lions-Magenes}, for example).

\section{The linear 2D elasticity problem}\label{sec:1}

It is well-known, see for example \cite{Bo-Bre-For, Braess:book}, that the linear elasticity problem reads as follows.

 
\begin{equation}\label{strong}
\left\lbrace{
	\begin{aligned}
	&\mbox{Find } (\bfsigma,\bbu)~\mbox{such that}\\
	&-\bdiv \bfsigma= \bbf\qquad \mbox{in $\Omega$}\\
	& \bfsigma = \C \teps(\bbu)\qquad \mbox{in $\Omega$}\\
	&\bbu_{|\partial\Omega}=\bfzero 
	\end{aligned}
} \right.
\end{equation}
where homogeneous Dirichlet boundary conditions are here chosen only for the sake of simplicity. However, different conditions can be treated in standard ways. 
Introducing the $L^2$ scalar product $(\cdot,\cdot)$,
and $a(\bfsigma,\bftau):=(\D \bfsigma, \bftau)$, a mixed variational formulation of the problem is:
\begin{equation}\label{cont-pbl}
\left\lbrace{
\begin{aligned}
&\mbox{Find } (\bfsigma,\bbu)\in \Sigma\times U~\mbox{such that}\\
&a(\bfsigma,\bftau) + (\bdiv \bftau, \bbu)=0 \quad \forall \bftau\in \Sigma\\
& (\bdiv \bfsigma, \bbv) = -(\bbf,\bbv)\quad \forall \bbv\in U .
\end{aligned}
} \right.
\end{equation}
In this paper we confine to consider polygonal domains $\O\subset \R^2$. Furthermore, we set $\Sigma=H(\bdiv;\Omega)$, $U= L^2(\Omega)^2$, and we suppose that $\bbf \in L^2(\O)^2$.
The elasticity fourth-order symmetric tensor $\D:=\C^{-1}$ is assumed to be uniformly bounded, positive-definite and sufficiently regular. 
%
%
%
%
After having introduced a polygonal mesh $\Th$ of meshsize $h$, the bilinear form $a(\cdot,\cdot)$ in~\eqref{cont-pbl} can be split as
\begin{equation}\label{dec_a} a(\bfsigma,\bftau)=\sum_{{\E\in \Th}}a_E(\bfsigma,\bftau) \quad \textrm{ with } \quad
a_E(\bfsigma,\bftau) : = \int_E \D \bfsigma : \bftau \quad \forall \bfsigma,\bftau \in \Sigma .
\end{equation}

Similarly, it holds
\begin{equation}\label{dec_div}
(\bdiv \bftau, \bbv)=\sum_{{\E\in \Th}}(\bdiv \bftau, \bbv)_E \quad \textrm{with} \quad
(\bdiv \bftau,\bbv)_E : = \int_E \ \bdiv \bftau \cdot \bbv\quad \forall (\bftau,\bbv) \in \Sigma\times U .
\end{equation}
The divergence-free space is defined by:

\begin{equation}\label{eq:kernel}
K = \left\{ \bftau\in \Sigma \, :\, (\bdiv \bftau ,\bbv)=0 \quad \forall \bbv\in U \right\}.
\end{equation}

%
%
%
%
%
%

\section{The Virtual Element Methods}
\label{s:HR-VEM}

In this section we define our Virtual Element discretization of Problem \eqref{cont-pbl}. 
Let $\{\mathcal{T}_h\}_h$ be a sequence of decompositions of $\Omega$ into general polygonal elements $E$ with
\[
h_E := {\rm diameter}(E) , \quad
h := \sup_{E \in \mathcal{T}_h} h_E .
\]
We suppose that for all $h$, each element $E$ in $\mathcal{T}_h$ fulfils the following assumptions:
\begin{itemize}
	\item $\mathbf{(A1)}$ $E$ is star-shaped with respect to a ball of radius $ \ge\, \gamma \, h_E$, 
	\item $\mathbf{(A2)}$ the distance between any two vertexes of $E$ is $\ge c \, h_E$, 
\end{itemize}
where $\gamma$ and $c$ are positive constants. The hypotheses above, and in particular $\mathbf{(A2)}$, may be relaxed (see \cite{BLRXX}, where a scalar elliptic model problem in primal form is considered).

\subsection{The local spaces}\label{ss:E-spaces}

We first fix an integer $k\ge 1$.
Given a polygon $E\in\Th$ with $n_E$ edges, we introduce the space of local infinitesimal rigid body motions:

\begin{equation}\label{eq:rigid}
RM(E)=\left\{ \bbr(\bbx) = \bba + b(\bbx -\bbx_C)^\perp \quad \bba\in\R^2, \ b\in \R  \right\}.
\end{equation}
Above, given $\bbc=(c_1,c_2)^T\in\R^2$, $\bbc^\perp$ is the clock-wise rotated vector  $\bbc^\perp=(c_2,-c_1)^T$, and $\bbx_C$ is the centroid of $E$. We also introduce the space

\begin{equation}\label{eq:rigid_orth}
RM_k^\perp(E)=\left\{ \bbp_k(\bbx)\in \P_k(E)^2 \ : \ \int_E \bbp_k\cdot\bbr = 0 \quad \forall \bbr\in RM(E)  \right\}.
\end{equation}
Hence, the following $L^2$-orthogonal decomposition holds:

\begin{equation}\label{eq:decomp}
\P_k(E)^2 = RM(E)\bigoplus RM_k^\perp(E) .
\end{equation}

Our local approximation space for the  stress field is then defined by

\begin{equation}\label{eq:local_stress}
\begin{aligned}
 \Sigma_h(E)=\Big\{ \bftau_h\in & H(\bdiv;E)\ :\ \exists \bbw^\ast\in H^1(E)^2 \mbox{ such that } \bftau_h=\C\teps(\bbw^\ast);\\  &(\bftau_h\,\bbn)_{|e}\in \P_k(e)^2 \quad \forall e\in \partial E;\quad
\bdiv\bftau_h\in \P_k(E)^2 \Big\}.
\end{aligned}
\end{equation}
   
\begin{remark} Alternatively, the space \eqref{eq:local_stress} can be defined as follows.
	\begin{equation}\label{eq:local_stress-alt}
	\begin{aligned}
	 \Sigma_h(E)=\Big\{ \bftau_h\in & H(\bdiv;E)\ :\ \bftau_h=\bftau_h^T;\quad \curl \bcurl(\D \bftau_h) = 0 ; \\  &(\bftau_h\,\bbn)_{|e}\in \P_k(e)^2 \quad \forall e\in \partial E;\quad
	 \bdiv\bftau_h\in \P_k(E)^2 \Big\}.
	\end{aligned}
	\end{equation}
Here above, the equation $\curl \bcurl (\D \bftau_h) = 0$ is to be intended in the distribution sense.	
\end{remark}   

We remark that, due to the decomposition \eqref{eq:decomp}, we may write $\bdiv\bftau_h = \bbr_\bftau + \bbp_\bftau$ for a unique couple $(\bbr_\bftau,\bbp_\bftau)\in RM(E)\times RM_k^\perp(E)$.
   
We now notice that the $RM(E)$-component $\bbr_\bftau$  of $\bdiv\bftau_h$ is completely determined once 
$(\bftau_h\,\bbn)_{|e}:=\bbp_{k,e}\in \P_k(e)^2$ is given for all $e\in\partial E$. Indeed, let us denote with $\bfvarphi:\partial E\to \R^2$ the function such that $\bfvarphi_{|e}:=\bbp_{k,e}$. Using  the obvious compatibility condition and the orthogonal decomposition \eqref{eq:decomp}, we have:

\begin{equation}\label{eq:compat}
\int_E \bbr_\bftau\cdot \bbr=\int_E \bdiv\bftau_h\cdot \bbr =\int_{\partial E}\bftau_h\bbn\cdot \bbr =
\int_{\partial E}\bfvarphi\cdot \bbr \qquad \forall \bbr\in RM(E) ,
\end{equation}  
which allows to compute $\bbr_\bftau$ using the $\bbp_{k,e}$'s. More precisely, setting (cf \eqref{eq:rigid})

\begin{equation}\label{eq:div1}
\bbr_\bftau = \bfalpha_E + \beta_E (\bbx -\bbx_C)^\perp ,
\end{equation}
from \eqref{eq:compat} we infer

\begin{equation}\label{eq:div2}
\left\lbrace{
	\begin{aligned}
&\bfalpha_E =\frac{1}{|E|}\int_{\partial E}\bfvarphi = \frac{1}{|E|}\sum_{e\in\partial E}\int_e \bbp_{k,e} \\
&\beta_E = \frac{1}{ \int_E | \bbx -\bbx_C |^2 }\int_{\partial E} \bfvarphi\cdot (\bbx -\bbx_C)^\perp  = \frac{1}{ \int_E | \bbx -\bbx_C |^2 }\sum_{e\in\partial E}\int_e \bbp_{k,e}\cdot (\bbx -\bbx_C)^\perp.
\end{aligned}
} \right.
\end{equation}

The equations above suggest to take the following functionals as degrees of freedom in $\Sigma_h(E)$.

\begin{itemize}
	
\item For each edge $e\in\partial E$, given $\bftau_h\in \Sigma_h(E)$:

\begin{equation}\label{eq:edge-dofs}
\bftau_h \longrightarrow \int_e \bftau_h\bbn\cdot\bbp_k \qquad\forall \bbp_k \in \P_k(e)^2 .
\end{equation}	

\item In the polygon $E$, given $\bftau_h\in \Sigma_h(E)$:

\begin{equation}\label{eq:div-dofs}
\bftau_h \longrightarrow \int_E \bdiv\bftau_h\cdot\bfpsi_k \qquad\forall \bfpsi_k \in RM_k^\perp(E) .
\end{equation}
	
\end{itemize}

Indeed, we have:

\begin{lem}\label{lm:local-inter-well}
	If $\bftau_h\in\Sigma_h(E)$, then
	
	\begin{equation}\label{eq:interpol_cond}
	\left\lbrace{
		\begin{aligned}
		&\int_e \bftau_h \bbn\cdot\bbp_k = 0\qquad 
		\forall \bbp_k\in \P_k(e)^2 ,\quad 
		\forall e\in \partial E ;\\
		&\int_{E} \bdiv\bftau_h\cdot \bfpsi_k = 0 \qquad \forall \bfpsi_k\in  RM_k^\perp(E) ,
		\end{aligned}
	} \right.
	\end{equation} 
	imply $\bftau_h=\bfzero$.
	
\end{lem}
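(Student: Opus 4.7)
The plan is to peel off the information carried by the two families of degrees of freedom, in the natural order: first use the edge conditions to kill the boundary traction, then use the interior conditions to kill the divergence, and finally exploit the elastic structure $\bftau_h=\C\teps(\bbw^\ast)$ together with an integration by parts to conclude.

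First I would observe that the definition of $\Sigma_h(E)$ guarantees $(\bftau_h\,\bbn)_{|e}\in\P_k(e)^2$ for every $e\in\partial E$. Therefore the vanishing of the edge moments $\int_e \bftau_h\bbn\cdot\bbp_k=0$ against every $\bbp_k\in\P_k(e)^2$ can be applied with $\bbp_k=(\bftau_h\bbn)_{|e}$ itself, giving $(\bftau_h\bbn)_{|e}=\bfzero$ for all $e\in\partial E$; hence $\bftau_h\,\bbn=\bfzero$ on the whole $\partial E$.

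Next I would write $\bdiv\bftau_h=\bbr_\bftau+\bbp_\bftau$ following the $L^2$-orthogonal decomposition \eqref{eq:decomp}. The compatibility identity \eqref{eq:compat} (and the explicit formulas \eqref{eq:div1}-\eqref{eq:div2}) shows that $\bbr_\bftau$ is completely determined by the normal tractions on $\partial E$ through boundary integrals; since these tractions vanish, we get $\bbr_\bftau=\bfzero$ and thus $\bdiv\bftau_h=\bbp_\bftau\in RM_k^\perp(E)$. The second family of degrees of freedom \eqref{eq:div-dofs}, tested against $\bfpsi_k=\bbp_\bftau$, then gives $\int_E|\bbp_\bftau|^2=0$, so $\bdiv\bftau_h=\bfzero$ in $E$.

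The main (and only nontrivial) step is the last one: we must now exploit the \emph{structural} constraint $\bftau_h=\C\teps(\bbw^\ast)$ for some $\bbw^\ast\in H^1(E)^2$. Since $\bdiv\bftau_h=\bfzero$ in $E$ and $\bftau_h\bbn=\bfzero$ on $\partial E$, I would integrate by parts:
\begin{equation*}
0=-\int_E \bdiv\bftau_h\cdot\bbw^\ast=\int_E \bftau_h:\teps(\bbw^\ast)-\int_{\partial E}(\bftau_h\bbn)\cdot\bbw^\ast=\int_E \C\teps(\bbw^\ast):\teps(\bbw^\ast).
\end{equation*}
The uniform positive-definiteness of $\C$ (equivalently, of $\D=\C^{-1}$) then forces $\teps(\bbw^\ast)=\bfzero$ in $E$, so $\bbw^\ast$ is a rigid body motion and consequently $\bftau_h=\C\teps(\bbw^\ast)=\bfzero$, as required. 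The only delicate point is making sure the integration by parts is justified for $\bbw^\ast\in H^1(E)^2$ and $\bftau_h\in H(\bdiv;E)$ with vanishing normal trace, which is standard since $(\bftau_h\bbn)\cdot\bbw^\ast$ may be interpreted through the $H^{-1/2}$--$H^{1/2}$ duality on $\partial E$.
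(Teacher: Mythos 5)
Your proof is correct and follows essentially the same route as the paper's: the edge moments annihilate the (polynomial) traction, the compatibility relations \eqref{eq:div1}--\eqref{eq:div2} then kill the $RM(E)$-part of the divergence, the interior moments kill the $RM_k^\perp(E)$-part, and the structural constraint $\bftau_h=\C\teps(\bbw^\ast)$ forces $\bftau_h=\bfzero$. The only difference is that you make explicit the final integration-by-parts and positive-definiteness argument, which the paper leaves implicit behind its citation of the definition \eqref{eq:local_stress}.
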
  

\begin{proof}
	
	The first (boundary) conditions of \eqref{eq:interpol_cond} leads to infer, see \eqref{eq:div1} and \eqref{eq:div2}:
	
	\begin{equation}\label{eq:edge-cond}
	\bftau_h\bbn = \bfzero \quad \mbox{on }
	\partial E ,\qquad
	\bdiv\bftau_h \in RM_k^\perp(E) .
	\end{equation} 
	From \eqref{eq:edge-cond} and the second set of conditions in \eqref{eq:interpol_cond}, we deduce $\bdiv\bftau_h=\bfzero$. Therefore, $\bftau_h\in \Sigma_h(E)$ satisfies, $\bftau_h\bbn = \bfzero$ on $\partial E$, and $\bdiv\bftau_h=\bfzero$, which imply $\bftau_h=\bfzero$ (cf. \eqref{eq:local_stress}).
	
\end{proof}

Alternatively, one may consider the following degrees of freedom (useful for the implementation purposes).

\begin{itemize}
	
	\item For each edge $e\in\partial E$, given $\bftau_h\in \Sigma_h(E)$, the first subset of degrees of freedom is the set of values of $\bftau_h\bbn$ at $k+1$ distinct points in $e$ (for instance, the $k+1$ Gauss-Lobatto nodes). Another possible choice, which has been used in our numerical tests of Section \ref{s:numer} is the following: for each $e$, we introduce a local linear coordinate $s\in[-1,1]$; for both components of $\bftau_h\in \Sigma_h(E)$, the degrees of freedom are the $k+1$ coefficients of their expansion with respect to the basis $\{1,s,s^2,\ldots, s^k  \}$.    
	
	These $2(k+1)$ values account for the degrees of freedom described in \eqref{eq:edge-dofs}.

	\item In the polygon $E$, let us choose a basis $\{\bfvarphi_i \}$ ($i=1,\dots,(k+1)(k+2)-3$) for $RM_k^\perp$, see \eqref{eq:decomp}. Then, for each $\bftau_h\in\Sigma_h(E)$, we may write

	\begin{equation}\label{eq:div-dofs-alt}
		 \bdiv\bftau_h = \bfalpha +\beta(\bbx -\bbx_C)^\perp + \sum_{i=1}^{m_k}\gamma_{i}\bfvarphi_i(\bbx) ,
	\end{equation}
	where $m_k := (k+1)(k+2)-3$.
	
	The values $\{\gamma_1,\dots, \gamma_{m_k}\}$ can be taken as the second subset of degrees of freedom. They account for the degrees of freedom described in \eqref{eq:div-dofs}, while $\{\bfalpha,\beta\}$ are computed according with \eqref{eq:div2}.
	
\end{itemize}
   
The local approximation space for the displacement field is simply defined by, see \eqref{eq:rigid}:

\begin{equation}\label{eq:local_displ}
U_h(E)=\Big\{ \bbv_h\in  L^2(E)^2\ :\ \bbv_h\in \P_k(E)^2 \Big\},
\end{equation} 
and a set of degrees of freedom can be defined in a standard way.

We notice that $\dim( \Sigma_h(E))=2(k+1)\,n_E + m_k$, while $\dim(U_h(E))=(k+1)(k+2)$. In Figure \ref{fig:dofs} the local degrees of freedom for stresses and displacements are schematically depicted for $k=1$: arrows represent traction degrees of freedom (cf. \eqref{eq:edge-dofs}), bullets represent the divergence degrees of freedom (cf. \eqref{eq:div-dofs}), crosses represent the displacement degrees of freedom (cf. \eqref{eq:local_displ}).  

\begin{figure}[h!]
	\centering
	\includegraphics[width=0.75\textwidth]{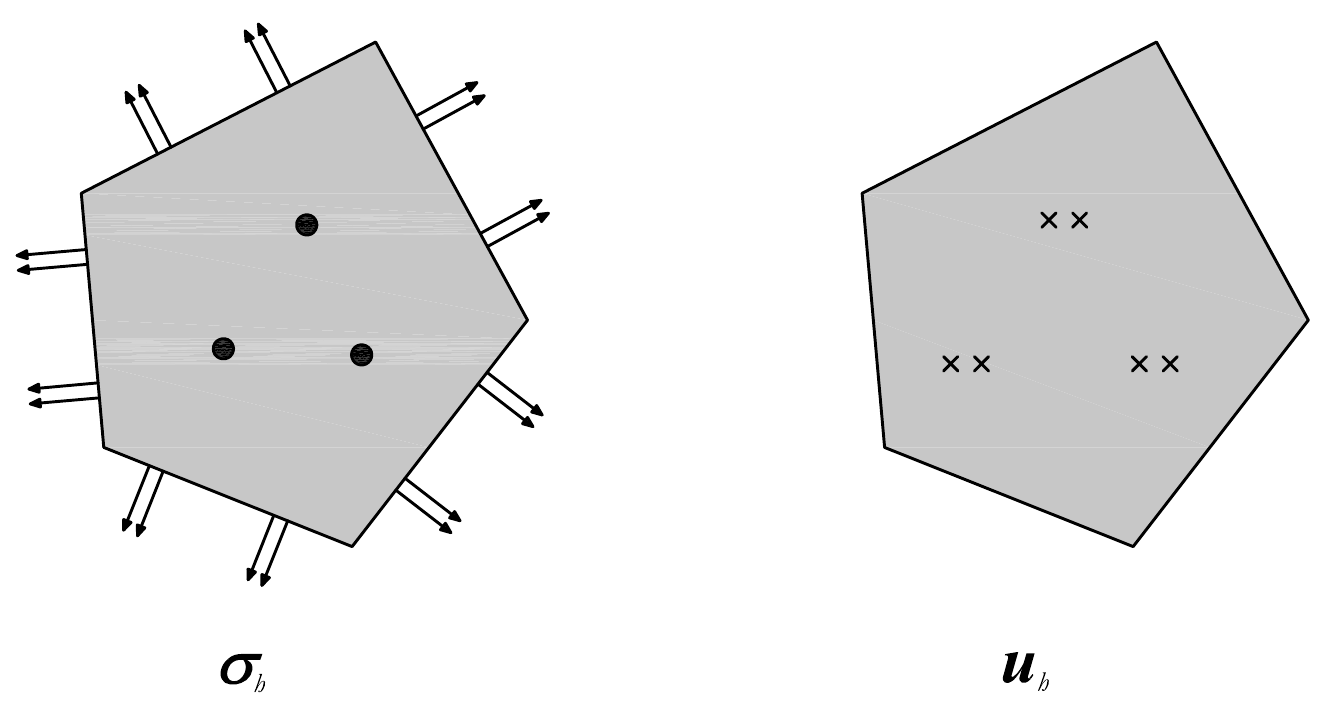}
	\caption{Schematic description of the local degrees of freedom for $k=1$.}
	\label{fig:dofs}
\end{figure}


\subsection{The local bilinear forms}\label{ss:E-bforms}

We begin by noticing that, for every $\bftau_h\in  \Sigma_h(E)$ and $\bbv_h\in U_h(E)$, the term

\begin{equation}\label{eq:div-ex}
\int_E \bdiv \bftau_h\cdot \bbv_h
\end{equation}
is computable. As a consequence, the terms $(\bdiv \bftau, \bbu)$ and $(\bdiv \bfsigma, \bbv)$ in problem \eqref{cont-pbl} are left unaltered. Instead, the term 

\begin{equation}
a_E(\bfsigma_h,\bftau_h)  = \int_E \D \bfsigma_h : \bftau_h
\end{equation}
is not computable for a general couple $(\bfsigma_h,\bftau_h)\in \Sigma_h(E)\times \Sigma_h(E)$. In the spirit of the VEM approach (see \cite{volley}, for instance), we define a suitable approximation $a_E^h(\cdot,\cdot)$. 
Given $E\in \Th$, following \cite{ADLP_HR}, we first introduce the space: 

\begin{equation}\label{eq:sigma-comp}
\widetilde\Sigma(E):=\left\{ \bftau\in H(\bdiv;E)\ : \ \exists \bbw\in H^1(E)^2 \mbox{ \rm such that } \bftau=\C\teps(\bbw) \right\} ,
\end{equation}
and the global space $\widetilde\Sigma$ as 

\begin{equation}\label{eq:sigma-glob}
\widetilde\Sigma:=\left\{ \bftau\in H(\bdiv;\Omega)\ : \ \exists \bbw\in H^1(\O)^2 \mbox{ \rm such that } \bftau=\C\teps(\bbw) \right\} .
\end{equation}

We then introduce the projection operator $\Pi_E^k$ onto the space 

\begin{equation}\label{eq:pspace_0}
T_k(E):=\C\,\teps(\P_{k+1}(E)^2) =\left\{ \C\,\teps( \bbp_{k+1})\ :\ \bbp_{k+1}\in \P_{k+1}(E)^2 \right\} 
\end{equation}

by setting (cf. \eqref{eq:sigma-comp}):
\begin{equation}\label{eq:proj}
\left\{
\begin{aligned}
& \Pi_E^k \, :\,  \widetilde\Sigma(E)\to T_k(E)\\
& \bftau \mapsto \Pi_E^k\bftau\\
&a_E(\Pi_E^k\bftau,\bfpi_k)  = a_E(\bftau,\bfpi_k)\qquad \forall\bfpi_k \in T_k(E) .
\end{aligned}
\right.
\end{equation}
We remark that \eqref{eq:proj} is equivalent to find $\bbp_{k+1}\in \P_{k+1}(E)^2$ such that 

\begin{equation}\label{eq:proj_2}
\int_E \C\,\teps(\bbp_{k+1})\,:\, \teps(\bbq_{k+1}) = \int_E \bftau\,:\, \teps(\bbq_{k+1}) \qquad \forall \, \bbq_{k+1}\in \P_{k+1}(E)^2 .
\end{equation}

\begin{remark}\label{rm:projrem}
Obviously, $\bbp_{k+1}$ is defined up to a term in $RM(E)$. In addition, we observe that for $\C$ constant in $E$, it holds $T_1(E)=\P_1(E)^4_s$; for $\C$ varying in $E$, $T_{k}(E)$ is not even a polynomial space. 
\end{remark}

We have the following proposition.

\begin{prop}\label{pr:projapprox}
	Fix $k\ge 1$, and let $r$ be such that $0\le r\le k+1$.
	Under assumptions $\mathbf{(A1)}$ and $\mathbf{(A2)}$,
	for the projection operator $\Pi_E^k$ defined in \eqref{eq:proj}, the following estimates hold:
	
	\begin{equation}\label{eq:l2est-proj}
		|| \bftau -\Pi_E^k\bftau||_{0,E}\lesssim h_E^r |\bbw|_{r+1,E} \qquad \forall \bftau\in  \widetilde\Sigma(E) \cap H^r(E)^4_s \, \mbox{ with $\bftau=\C\teps(\bbw)$}.
	\end{equation}

\end{prop}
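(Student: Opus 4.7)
My plan is to combine a standard quasi-optimality estimate for the energy projection with Dupont--Scott polynomial approximation on star-shaped domains.

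First, I would establish the quasi-optimality estimate. By the uniform positive-definiteness and boundedness of $\D$ (and hence of $\C$), the bilinear form $a_E(\cdot,\cdot)$ is an inner product on the space of symmetric $L^2$-tensors, whose induced norm is equivalent to $\|\cdot\|_{0,E}$ with constants independent of $E$. Since by \eqref{eq:proj} the element $\Pi_E^k\bftau$ is the $a_E$-orthogonal projection of $\bftau$ onto $T_k(E)$, a C\'ea-type argument yields
$$
\|\bftau - \Pi_E^k\bftau\|_{0,E} \;\lesssim\; \inf_{\bfpi_k \in T_k(E)} \|\bftau - \bfpi_k\|_{0,E}.
$$

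Next, given $\bftau = \C\teps(\bbw)$ with $\bbw \in H^{r+1}(E)^2$ (otherwise the right-hand side in the statement is infinite), I would exhibit a specific competitor. By the Dupont--Scott / Bramble--Hilbert polynomial approximation theory on domains star-shaped with respect to a ball, which applies thanks to assumption $\mathbf{(A1)}$, there exists $\bbp_{k+1} \in \P_{k+1}(E)^2$ with
$$
|\bbw - \bbp_{k+1}|_{1,E} \;\lesssim\; h_E^r \, |\bbw|_{r+1,E},
$$
the hidden constant depending on $\gamma$ but not on $h_E$. Setting $\bfpi_k := \C\teps(\bbp_{k+1}) \in T_k(E)$, the uniform boundedness of $\C$ gives
$$
\|\bftau - \bfpi_k\|_{0,E} \;=\; \|\C\teps(\bbw - \bbp_{k+1})\|_{0,E} \;\lesssim\; |\bbw - \bbp_{k+1}|_{1,E} \;\lesssim\; h_E^r\,|\bbw|_{r+1,E}.
$$
Chaining the two displayed bounds concludes the proof.

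The only delicate issue is making sure every hidden constant is independent of $E$ (and in particular of $h_E$): Dupont--Scott achieves this through the uniform star-shape constant $\gamma$ in $\mathbf{(A1)}$, while the norm equivalence between $a_E(\cdot,\cdot)^{1/2}$ and $\|\cdot\|_{0,E}$ relies on the qualitative assumptions on $\C,\D$ stated in Section \ref{sec:1}. I do not expect $\mathbf{(A2)}$ to enter this particular estimate; it will instead be used later for the edge-based/trace arguments.
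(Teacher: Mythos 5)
Your argument is correct and is essentially the paper's own proof in slightly different clothing: the paper identifies $\Pi_E^k\bftau=\C\teps(\bbp_{k+1})$ with the Galerkin solution of a pure-traction Neumann problem in $\P_{k+1}(E)^2/RM(E)$ and then invokes ``standard Galerkin arguments plus polynomial approximation,'' which is precisely your C\'ea quasi-optimality for the $a_E$-orthogonal projection combined with Dupont--Scott approximation of $\bbw$ under $\mathbf{(A1)}$. Your closing observation that $\mathbf{(A2)}$ plays no role here is also consistent with the paper.
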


\begin{proof}
If $\bftau\in  \widetilde\Sigma(E)$, there exists (cf. \eqref{eq:sigma-comp}) $\bbw\in H^1(E)^2$ such that $\bftau=\C\teps(\bbw)$. Inspecting \eqref{eq:proj_2}, we realize that $\Pi_E^k\bftau= \C\teps(\bbp_{k+1})$, where $\bbp_{k+1}$ is the Galerkin solution in $\P_{k+1}(E)^2/RM(E)$ of the following Neumann problem.

\begin{equation}\label{eq:proj-neumann}
\left\{
\begin{aligned}
& \mbox{Find $\bbz\in H^1(E)^2/RM(E)$ s.t.:}& \\
& \div \left(\C\teps(\bbz) \right) = \div \left(\C\teps(\bbw)\right) &\mbox{in $E$} \\
& \C\teps(\bbz)\bbn=\C\teps(\bbw)\bbn &\mbox{on $\partial E$}. 
\end{aligned}
\right.
\end{equation}
Estimate \eqref{eq:l2est-proj} now follows from standard arguments of the Galerkin technique combined with polynomial approximation results.   
\end{proof} 

With the operator $\Pi_E^k$ at hand, we set 

\begin{equation}\label{eq:ah1}
\begin{aligned}
a_E^h(\bfsigma_h,\bftau_h)  &= 
a_E(\Pi_E^k\bfsigma_h,\Pi_E^k\bftau_h) + s_E\left( (Id-\Pi_E^k)\bfsigma_h, (Id-\Pi_E^k)\bftau_h \right)\\
&=\int_E \D (\Pi_E^k\bfsigma_h) : (\Pi_E^k\bftau_h)  + s_E\left( (Id-\Pi_E^k)\bfsigma_h, (Id-\Pi_E^k)\bftau_h \right) ,
\end{aligned}
\end{equation}
where $s_E(\cdot,\cdot)$ is a suitable stabilization term. We propose the following choice:
	
	\begin{equation}\label{eq:stab1}
	s_E(\bfsigma_h,\bftau_h) : = \kappa_E\, h_E\int_{\partial E} \bfsigma_h\bbn\cdot \bftau_h\bbn  ,
	\end{equation}
	where $\kappa_E$ is a positive constant to be chosen (for instance, any norm of $\D_{|E}$).

%

%
%
	

		%


\subsection{The local loading terms}\label{ss:E-load}

The loading term, see \eqref{cont-pbl}, is simply:

\begin{equation}\label{eq:fh}
(\bbf,\bbv_h)=\int_{\Omega}\bbf\cdot\bbv_h =\sum_{E\in\Th}\int_{E}\bbf\cdot\bbv_h .
\end{equation}
Computing \eqref{eq:fh} is possible once a suitable quadrature rule is available for polygonal domains (see for instance \cite{NME2759,Sommariva2007,Mousavi2011}).  

%


\subsection{The discrete scheme}\label{ss:discrete}
 
 We introduce a global approximation space for the stress field, by glueing the local approximation spaces, see \eqref{eq:local_stress}:

\begin{equation}\label{eq:global-stress}
\Sigma_h=\Big\{ \bftau_h\in  H(\bdiv;\Omega)\ :\ \bftau_{h|E}\in  \Sigma_h(E)\quad \forall E\in\Th \Big\}.
\end{equation}

For the global approximation of the displacement field, we take, see \eqref{eq:local_displ}:

\begin{equation}\label{eq:global-displ}
U_h=\Big\{ \bbv_h\in  L^2(\Omega)^2\ :\ \bbv_{h|E}\in U_h(E)\quad \forall E\in\Th \Big\}.
\end{equation}
 
In addition, given a local approximation of $a_E(\cdot,\cdot)$, see \eqref{eq:ah1}, we set

\begin{equation}\label{eq:global-ah}
a_h(\bfsigma_h,\bftau_h):= \sum_{E\in\Th}a_E^h(\bfsigma_h,\bftau_h) .
\end{equation}

%
%
%
%
The method we consider is then defined by

\begin{equation}\label{eq:discr-pbl-ls}
\left\lbrace{
	\begin{aligned}
	&\mbox{Find } (\bfsigma_h,\bbu_h)\in \Sigma_h\times U_h~\mbox{such that}\\
	&a_h(\bfsigma_h,\bftau_h)    + (\bdiv \bftau_h, \bbu_h)= 0 \quad \forall \bftau_h\in \Sigma_h\\
	& (\bdiv \bfsigma_h, \bbv_h) = -(\bbf,\bbv_h)\quad \forall \bbv_h\in U_h .
	\end{aligned}
} \right.
\end{equation}
%

%
%
%
%
%
%
%
%
%



\section{Stability and convergence analysis}\label{s:theoretical}

{\em 
Since some results of the analysis follow the guidelines of the theory developed in \cite{ADLP_HR}, we do not provide full details of all the proofs. However, the treatment of the variable material coefficient case is different, and it is reflected in Theorem \ref{th:main_convergence}. Its proof is thus thoroughly provided.}

In the sequel, given a measurable subset $A\subseteq \Omega$ and $r > 2$, we will use the space

\begin{equation}\label{eq:regspace}
	W^r(A):=\left\{  \bftau  \ : \bftau\in L^r(A)^4_s \ , \ \bdiv\bftau\in L^2(A)^2    \right\} ,
\end{equation} 
equipped with the obvious norm.

\subsection{An interpolation operator for stresses}\label{ss:interpol-oper}

We now introduce a local interpolation operator $\IE : W^r(E)\to  \Sigma_h(E)$, the higher-order version of the one introduced in \cite{ADLP_HR}. Given $\bftau\in W^r(E)$, $\IE\bftau\in \Sigma_h(E)$ is determined by:

\begin{equation}\label{eq:loc-interp_0}
\left\{
\begin{aligned}
	& \int_{\partial E} (\IE \bftau) \bbn\cdot \bfvarphi_k = \int_{\partial E}  \bftau\bbn\cdot \bfvarphi_k \qquad \forall \bfvarphi_k\in  R_k(\partial E) \\
	& \int_{E} \bdiv(\IE \bftau)\cdot \bfpsi_k = \int_{E} \bdiv \bftau\cdot \bfpsi_k \qquad \forall \bfpsi_k\in  RM_k^\perp(E) .
\end{aligned}	
\right.
\end{equation} 
Above, the space $R_k(\partial E)$ is defined by:

\begin{equation}\label{eq:Rast} 
	R_k(\partial E) = \left\{  
	\bfvarphi_k\in L^2(\partial E)^2 \,: \, 
	\bfvarphi_{k | e} \in \P_k(e)^2, \ \forall e\in\partial E  \right\}.
\end{equation}
If $\bftau$ is not sufficiently regular, the integral in the right-hand side of \eqref{eq:loc-interp_0} must be seen as a duality between $W^{-\frac{1}{r},r}(\partial E)^2$ and $W^{\frac{1}{r},r'}(\partial E)^2$. If $\bftau$ is a regular function, the above conditions are equivalent to require: 

\begin{equation}\label{eq:loc-interp}
	\left\lbrace{
		\begin{aligned}
			&\int_e (\IE \bftau) \bbn\cdot\bbq_k = \int_e  \bftau\bbn \cdot\bbq_k\qquad 
			\forall \bbq_k\in \P_k(e)^2 ,\quad 
			\forall e\in \partial E ;\\
			&\int_{E} \bdiv(\IE \bftau)\cdot \bfpsi_k = \int_{E} \bdiv \bftau\cdot \bfpsi_k \qquad \forall \bfpsi_k\in  RM_k^\perp(E).
		\end{aligned}
	} \right.
\end{equation} 

We remark  that, from Lemma \ref{lm:local-inter-well}, $\IE\bftau\in\Sigma_h(E)$ is well-defined by conditions \eqref{eq:loc-interp_0}.
The global interpolation operator $\Ih : W^r(\O)\to \Sigma_h$ is then defined by simply glueing the local contributions provided by $\IE$. More precisely, we set $(\Ih\tau)_{|E} :=\IE\bftau_{|E}$ for every $E\in\Th$ and $\bftau\in W^r(\O)$.

The following {\em commuting diagram property} is one of the key points in the analysis of the methods.

\begin{prop}\label{pr:comm-prop}
	Given $k\ge 1$, for the operator $\Ih : W^r(\O)\to \Sigma_h$ introduced above, it holds:
	
\begin{equation}\label{eq:comm-diagr}
\bdiv (\Ih\bftau) = P_h^k(\bdiv\bftau)\qquad \forall\, \bftau\in W^r(\O) ,
\end{equation}	 
where $P_h^k$ denotes the $L^2$-projection operator onto the piecewise polynomial functions of degree $\le k$. 

\end{prop}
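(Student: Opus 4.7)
\medskip
\noindent\textbf{Proof plan.} The plan is to reduce the global commuting diagram to an element-by-element identity and then verify it by testing against the $L^2$-orthogonal splitting $\P_k(E)^2 = RM(E)\oplus RM_k^\perp(E)$.

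\medskip
\noindent\textbf{Step 1 (reduction to local statement).} First I would observe that, by construction, for every $E\in\Th$ we have $\bdiv(\IE\bftau)\in\P_k(E)^2$ (this is built into the definition of $\Sigma_h(E)$). Moreover, the edge conditions in \eqref{eq:loc-interp_0} guarantee that the normal tractions of $\IE\bftau$ match across inter-element edges (both sides equal the $L^2$-projection onto $\P_k(e)^2$ of the single-valued normal trace $\bftau\bbn$), so $\Ih\bftau\in H(\bdiv;\Omega)$ and $\bdiv(\Ih\bftau)_{|E}=\bdiv(\IE\bftau)$. Since $P_h^k$ acts elementwise, it is therefore enough to prove
\[
\int_E \bigl(\bdiv(\IE\bftau) - \bdiv\bftau\bigr)\cdot \bbp_k = 0 \qquad \forall\,\bbp_k\in \P_k(E)^2,\ \forall\,E\in\Th.
\]

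\medskip
\noindent\textbf{Step 2 (splitting the test space).} I would split $\bbp_k = \bbr + \bfpsi_k$ along the decomposition \eqref{eq:decomp}, with $\bbr\in RM(E)$ and $\bfpsi_k\in RM_k^\perp(E)$. For the $RM_k^\perp(E)$-component, the identity is exactly the second line of \eqref{eq:loc-interp_0}, so nothing is left to prove.

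\medskip
\noindent\textbf{Step 3 (the rigid-motion component).} The remaining case $\bbp_k=\bbr\in RM(E)$ is the substantive one. Here I would integrate by parts on both $\IE\bftau$ and $\bftau$: since $\IE\bftau$ is symmetric (because $\Sigma_h(E)\subset \widetilde\Sigma(E)$, cf.\ \eqref{eq:local_stress}--\eqref{eq:sigma-comp}) and $\bftau$ is symmetric by the definition \eqref{eq:regspace} of $W^r(E)$, while $\teps(\bbr)=0$ for $\bbr\in RM(E)$, the bulk terms $\int_E (\IE\bftau):\teps(\bbr)$ and $\int_E \bftau:\teps(\bbr)$ both vanish. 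This leaves
\[
\int_E \bdiv(\IE\bftau)\cdot\bbr = \int_{\partial E}(\IE\bftau)\bbn\cdot\bbr, \qquad
\int_E \bdiv\bftau\cdot\bbr = \int_{\partial E}\bftau\bbn\cdot\bbr,
\]
where the second equality is understood in the $W^{-1/r,r}$--$W^{1/r,r'}$ duality when $\bftau$ lacks pointwise traces. Finally, on each edge $e\in\partial E$, $\bbr_{|e}$ is affine, hence belongs to $\P_k(e)^2$ for $k\ge 1$; thus the first line of \eqref{eq:loc-interp_0} yields
\[
\int_{\partial E}(\IE\bftau)\bbn\cdot\bbr = \int_{\partial E}\bftau\bbn\cdot\bbr,
\]
concluding the local identity.

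\medskip
\noindent\textbf{Main obstacle.} The only non-routine point is the rigid-motion component of Step~3: one must simultaneously exploit the symmetry of $\IE\bftau$ (which is encoded implicitly through $\Sigma_h(E)\subset \widetilde\Sigma(E)$), the kinematic fact $\teps(\bbr)=0$, and the inclusion $RM(E)_{|e}\subset\P_k(e)^2$. All three together are what make the edge degrees of freedom \eqref{eq:edge-dofs} "see" the boundary flux of the rigid-motion part of the divergence. Once this identity is in hand, gluing across elements via $\Ih\bftau\in H(\bdiv;\Omega)$ gives the global statement \eqref{eq:comm-diagr}.
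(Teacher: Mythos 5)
Your proposal is correct and follows essentially the same route as the paper: reduce to a single element, split the test polynomial as $\bbq_k=\bbr+\bfpsi_k$ via \eqref{eq:decomp}, dispatch the $RM_k^\perp(E)$ part with the second interpolation condition, and handle the rigid-motion part by integrating by parts on both $\bftau$ and $\IE\bftau$ (using symmetry and $\teps(\bbr)=\bfzero$) and matching the boundary terms through the edge conditions, since $\bbr_{|e}\in\P_k(e)^2$ for $k\ge 1$. Your Step 3 simply makes explicit the symmetry and duality points that the paper leaves implicit.
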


\begin{proof} It is sufficient to prove property \eqref{eq:comm-diagr} locally, in each element $E\in\Th$. Fix now $\bbq_k\in\P_k(E)^2$ and $\bftau\in W^r(E)$. By the decomposition \eqref{eq:decomp}, we write $\bbq_k = \bbr + \bfpsi_k$, where $\bbr\in RM(E)$ and $\bfpsi_k\in RM_k^\perp(E)$. We have:
	
\begin{equation}\label{eq:comm-proof1}
\begin{aligned}
\int_E \bdiv\bftau \cdot \bbq_k &= \int_E \bdiv\bftau \cdot \bbr + \int_E \bdiv\bftau \cdot \bfpsi_k & &\\
&= 
\int_{\partial E} \bftau\bbn \cdot \bbr + \int_E \bdiv\bftau \cdot \bfpsi_k& &\\
& = \int_{\partial E} (\IE\bftau)\bbn \cdot \bbr + \int_E \bdiv(\IE\bftau) \cdot \bfpsi_k & & \mbox{ (by \eqref{eq:loc-interp})}\\
& = \int_{E} \bdiv (\IE\bftau) \cdot \bbr + \int_E \bdiv(\IE\bftau) \cdot \bfpsi_k & & \mbox{ (integration by parts)}\\
& = \int_E \bdiv(\IE\bftau) \cdot \bbq_k.  & &
\mbox{ (since $\bbp_k = \bbr + \bfpsi_k$)} 
\end{aligned}
\end{equation}
From \eqref{eq:comm-proof1} and the definition of $L^2$-projection operator, we get $\bdiv (\IE\bftau) = P_h^k(\bdiv\bftau)$ on $E$.
	
\end{proof}

\subsection{Approximation estimates}\label{ss:approx}

For the interpolation operator $\Ih$, using similar steps to the ones detailed in \cite{ADLP_HR}, one can prove the error estimate stated here below.

\begin{prop}\label{pr:approxest}
	Fix $k\ge 1$, and let $r$ be such that $1\le r\le k+1$.
	 Under assumptions $\mathbf{(A1)}$ and $\mathbf{(A2)}$,
	for the interpolation operator $\IE$ defined in \eqref{eq:loc-interp}, the following estimates hold:
	
	\begin{equation}\label{eq:l2est}
		|| \bftau -\IE\bftau||_{0,E}\lesssim h_E^r |\bftau|_{r,E} \qquad \forall \bftau\in  \widetilde\Sigma(E) \cap H^r(E)^4_s ,
	\end{equation}
	
	\begin{equation}\label{eq:divest}
		\begin{aligned}
			|| \bdiv(\bftau -\IE\bftau)||_{r,E}  \lesssim h_E^r |\bdiv\bftau|_{r,E}   \ \
			\forall \bftau\in \widetilde\Sigma(E) \cap H^r(E)^4_s 
			\mbox{ \rm s.t.  $\bdiv\bftau\in H^r(E)^2$}.
		\end{aligned}
	\end{equation}	
	
\end{prop}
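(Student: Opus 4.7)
The strategy is to treat the two estimates separately: the divergence bound reduces to a projection error via the commuting diagram, while the $L^2$ bound requires combining a Bramble-Hilbert-type argument with local stability of $\IE$.

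For the divergence estimate, I would apply Proposition \ref{pr:comm-prop} directly. Since $\bdiv(\IE\bftau) = P_h^k(\bdiv\bftau)$, we get
\[
\bdiv(\bftau - \IE\bftau) \;=\; (\mathrm{Id} - P_h^k)(\bdiv\bftau),
\]
so the bound reduces to the $L^2$-projection error estimate for piecewise polynomials of degree $\le k$. Under assumptions $\mathbf{(A1)}$ and $\mathbf{(A2)}$, this is a classical Deny-Lions/Bramble-Hilbert result on star-shaped domains, yielding $h_E^r\,|\bdiv\bftau|_{r,E}$. (I note in passing that the inequality as stated in the proposition has $||\cdot||_{r,E}$ on the left but should read $||\cdot||_{0,E}$ for the conclusion to be meaningful; the plan above delivers the latter.)

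For the $L^2$ estimate on the stress itself, my plan is the standard three-step technique for mixed interpolation operators. First, I would establish a local stability estimate of the form
\[
\|\IE\bftau\|_{0,E} \;\lesssim\; \|\bftau\|_{0,E} + h_E\,\|\bdiv\bftau\|_{0,E} + h_E^{1-2/r}\,\|\bftau\|_{L^r(E)},
\]
by bounding $\IE\bftau$ through its degrees of freedom \eqref{eq:edge-dofs}--\eqref{eq:div-dofs}, using a trace inequality for the edge contributions and duality for the divergence contributions, with the appropriate $h_E$-scaling coming from $\mathbf{(A1)}$--$\mathbf{(A2)}$. Second, I would select a polynomial $\bfpi\in \widetilde\Sigma(E)$ approximating $\bftau$ to order $h_E^r$ in a subspace that is preserved by $\IE$, namely (modulo variations of $\C$) a polynomial symmetric tensor whose traction on each edge is in $\P_k(e)^2$ and whose divergence lies in $\P_k(E)^2$. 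Third, using $\IE$-invariance, I would write
\[
\bftau - \IE\bftau \;=\; (\bftau - \bfpi) + \IE(\bfpi - \bftau),
\]
apply the triangle inequality and the stability bound from the first step, and conclude via the approximation estimate on $\bftau - \bfpi$.

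The main obstacle lies in the interplay of the first two steps when $\C$ is variable. When $\C$ is constant, $\P_k(E)^4_{\rm s}$ is naturally contained in $\Sigma_h(E)$, and standard polynomial approximation yields $\bfpi$. When $\C$ varies, however, $\C\teps(\bbp_{k+1})\bbn$ is generally not a polynomial on edges, so the polynomial invariance used in step two breaks down, and one cannot appeal to $\Sigma_h(E)$-invariance of a classical polynomial space. The natural way around this, which I would pursue, is to first prove the estimate for $\C$ frozen to its average $\overline\C_E$ on $E$ (recovering the constant-coefficient situation and hence polynomial invariance), and then treat the variation $\C - \overline\C_E$ as a perturbation absorbed in the right-hand side via the regularity of $\C$ and a factor $h_E$ from its first-order Taylor remainder. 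The stability bound from step one is robust with respect to this substitution, so the final $h_E^r$ rate is preserved, matching the conclusion stated in \eqref{eq:l2est}.
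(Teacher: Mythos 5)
Your treatment of \eqref{eq:divest} coincides with what the paper actually does: the authors give no proof beyond the remark that it is a direct consequence of the commuting property \eqref{eq:comm-diagr} and standard approximation results, which is precisely your reduction $\bdiv(\bftau-\IE\bftau)=(\mathrm{Id}-P_h^k)(\bdiv\bftau)$ followed by a Bramble--Hilbert estimate for the $L^2$-projection on star-shaped elements; your observation that the left-hand side of \eqref{eq:divest} should carry the $\|\cdot\|_{0,E}$ norm rather than $\|\cdot\|_{r,E}$ is also correct. For \eqref{eq:l2est} the paper provides no argument at all (it defers to \cite{ADLP_HR}), and your stability-plus-polynomial-invariance outline is the standard route one would follow there, including the freezing of $\C$ to handle variable coefficients, which mirrors the device the paper itself uses in the proof of Theorem \ref{th:main_convergence}.

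There is, however, a genuine error in your step two for $k\ge 2$: the claim that for constant $\C$ one has $\P_k(E)^4_s\subset\Sigma_h(E)$ is false. Membership in $\Sigma_h(E)$ requires $\bftau_h=\C\teps(\bbw^\ast)$ for some $\bbw^\ast\in H^1(E)^2$, equivalently the Saint-Venant compatibility condition $\curl\bcurl(\D\bftau_h)=0$, and a generic symmetric polynomial tensor of degree $\ge 2$ violates it: for $\bftau_h=\mathrm{diag}(y^2,0)$ with $\D$ proportional to the identity the compatibility expression reduces to $\partial_{22}(y^2)=2\neq 0$. Hence $\IE\bfpi=\bfpi$ fails for your candidate $\bfpi$ and the invariance step collapses exactly where the higher-order case departs from the lowest-order one. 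The repair is to take the invariant polynomial subspace to be $T_k(E)=\C\,\teps(\P_{k+1}(E)^2)$ of \eqref{eq:pspace_0}, which does lie in $\Sigma_h(E)$ for constant $\C$ (its normal traces are in $\P_k(e)^2$ and its divergence in $\P_{k-1}(E)^2$), and whose $O(h_E^r)$ approximation property is essentially Proposition \ref{pr:projapprox}, upgraded from $|\bbw|_{r+1,E}$ to $|\bftau|_{r,E}$ on the right-hand side via a Korn-type inequality using $\teps(\bbw)=\D\bftau$. With that substitution your stability bound, the perturbation argument for variable $\C$, and the triangle inequality go through; note that for $k=1$ your original claim happens to hold, since $T_1(E)=\P_1(E)^4_s$ by Remark \ref{rm:projrem}.
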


We remark that, in particular, estimate \eqref{eq:divest} is a direct consequence of the commuting property \eqref{eq:comm-diagr} and standard approximation results, see \cite{scott-dupont}.

\subsection{The {\em ellipticity-on-the-kernel} and the {\em inf-sup} conditions}\label{ss:elker}

%

We first notice that (see \eqref{eq:global-stress}, \eqref{eq:local_stress} and \eqref{eq:global-displ}, \eqref{eq:local_displ}): 

\begin{equation}\label{eq:kern-incl}
	\bdiv(\Sigma_h)\subseteq U_h .
\end{equation}
As a consequence, introducing the discrete kernel $K_h\subseteq \Sigma_h$:

\begin{equation}\label{eq:div_incl}
	K_h =\{ \bftau_h\in\Sigma_h\, :\, (\bdiv \bftau_h,\bbv_h)=0 \quad \forall \bbv_h\in U_h  \},
\end{equation}
we infer that $K_h\subseteq K$, i.e. $\bftau_h\in K_h$ implies $\bdiv \bftau_h=\bfzero$ (cf. \eqref{eq:kernel}). Hence, it holds:

\begin{equation}\label{eq:l2-hdiv}
	|| \bftau_h ||_\Sigma = ||\bftau_h ||_0\qquad \forall \bftau_h\in K_h .
\end{equation}
This is essentially the property that leads to the {\em ellipticity-on-the-kernel} condition:

\begin{prop}\label{pr:elker}
	For the method described in Section \ref{s:HR-VEM}, there exists a constant $\alpha >0$ such that
	
	\begin{equation}\label{eq:elker}
		a_h(\bftau_h,\bftau_h)\ge \alpha\, || \bftau_h||^2_\Sigma\qquad \forall \bftau_h\in K_h .
	\end{equation}
	
\end{prop}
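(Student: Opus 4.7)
The plan is to work locally: using \eqref{eq:global-ah}, the symmetry of $a_E^h(\cdot,\cdot)$, and the fact that $\tau_h\in K_h$ satisfies $\bdiv\tau_{h|E}=0$ on each $E\in\Th$ (so that $\|\tau_h\|_\Sigma = \|\tau_h\|_0$ by \eqref{eq:l2-hdiv}), it suffices to establish a uniform constant $\alpha>0$ such that
\[
a_E^h(\tau_h,\tau_h)\ \geq\ \alpha\,\|\tau_h\|_{0,E}^2 \qquad \forall\,\tau_h\in\Sigma_h(E)\ \text{with}\ \bdiv\tau_h=0.
\]
From the definition \eqref{eq:ah1} and the uniform positive-definiteness of $\D$, I would immediately infer
\[
a_E^h(\tau_h,\tau_h)\ \gtrsim\ \|\Pi_E^k\tau_h\|_{0,E}^2\ +\ s_E\bigl((I-\Pi_E^k)\tau_h,(I-\Pi_E^k)\tau_h\bigr),
\]
so the job reduces to controlling the $L^2(E)$-norm of $\tau_h$ by the right-hand side.

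The central ingredient is an \emph{inverse-trace-type} inequality for divergence-free elements of $\Sigma_h(E)$: if $\tau_h\in\Sigma_h(E)$ with $\bdiv\tau_h=0$, then by definition $\tau_h=\C\teps(\bbw^*)$ where $\bbw^*\in H^1(E)^2/RM(E)$ solves the homogeneous Neumann elasticity problem on $E$ with boundary data $\tau_h\bbn\in L^2(\partial E)^2$. Using elliptic regularity on polygons together with a scaling argument based on the mesh assumptions $\mathbf{(A1)}$--$\mathbf{(A2)}$, I would derive
\[
\|\tau_h\|_{0,E}^2\ \lesssim\ \|\bbw^*\|_{H^1(E)/RM}^2\ \lesssim\ h_E\,\|\tau_h\bbn\|_{L^2(\partial E)}^2 .
\]
This is the delicate step, as it relies on regularity of the Neumann problem being uniform over the shape-regular family of polygons.

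To conclude, I would bridge the right-hand side of the inverse-trace inequality to the quantities appearing in $a_E^h$. A triangle inequality gives
\[
\|\tau_h\bbn\|_{L^2(\partial E)}^2\ \lesssim\ \|(I-\Pi_E^k)\tau_h\cdot\bbn\|_{L^2(\partial E)}^2\ +\ \|(\Pi_E^k\tau_h)\bbn\|_{L^2(\partial E)}^2 ,
\]
where the first term is $(\kappa_E h_E)^{-1}s_E((I-\Pi_E^k)\tau_h,(I-\Pi_E^k)\tau_h)$, and the second is controlled by a standard trace-inverse inequality on the finite-dimensional space $T_k(E)=\C\teps(\P_{k+1}(E)^2)$ (valid under $\mathbf{(A1)}$--$\mathbf{(A2)}$ and smoothness of $\C$):
\[
\|(\Pi_E^k\tau_h)\bbn\|_{L^2(\partial E)}^2\ \lesssim\ h_E^{-1}\,\|\Pi_E^k\tau_h\|_{0,E}^2.
\]
Combining everything yields
\[
\|\tau_h\|_{0,E}^2\ \lesssim\ \frac{1}{\kappa_E}\,s_E\bigl((I-\Pi_E^k)\tau_h,(I-\Pi_E^k)\tau_h\bigr)\ +\ \|\Pi_E^k\tau_h\|_{0,E}^2\ \lesssim\ a_E^h(\tau_h,\tau_h),
\]
with constants depending only on $\gamma,c,\kappa_E$ and the bounds on $\D$. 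Summing over $E\in\Th$ and using \eqref{eq:l2-hdiv} produces the desired coercivity with a uniform $\alpha>0$. The main obstacle will be verifying the inverse-trace inequality for divergence-free virtual stresses in Step 2 with constants uniform in $h_E$, since it requires a careful Neumann-regularity/scaling argument on general shape-regular polygons.
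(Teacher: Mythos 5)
Your plan is correct, and it is essentially the argument the paper relies on: note that the paper states Proposition \ref{pr:elker} \emph{without} proof, deferring to the lowest-order analysis of \cite{ADLP_HR}, and the chain you propose (localization via $\|\bftau_h\|_\Sigma=\|\bftau_h\|_0$ on $K_h$, control of $\|\bftau_h\|_{0,E}$ through the boundary traction, then splitting $\bftau_h\bbn$ into its $\Pi_E^k$ and $(I-\Pi_E^k)$ parts so as to recover $s_E$ and $\|\Pi_E^k\bftau_h\|_{0,E}^2$) is exactly how that reference proceeds. One remark on the step you flag as delicate: no elliptic regularity on the polygon is needed. Since $\bdiv\bftau_h=\bfzero$ and $\bftau_h=\C\teps(\bbw^\ast)$ with $\D=\C^{-1}$, integration by parts gives
\begin{equation*}
\|\bftau_h\|_{0,E}^2\ \lesssim\ \int_E \D\bftau_h:\bftau_h\ =\ \int_E\bftau_h:\teps(\bbw^\ast)\ =\ \int_{\partial E}\bftau_h\bbn\cdot\bbw^\ast ,
\end{equation*}
and, choosing the representative of $\bbw^\ast$ orthogonal to $RM(E)$, the scaled trace inequality combined with the Korn and Poincar\'e inequalities (whose constants are uniform under $\mathbf{(A1)}$--$\mathbf{(A2)}$) yields $\|\bbw^\ast\|_{0,\partial E}\lesssim h_E^{1/2}\|\teps(\bbw^\ast)\|_{0,E}\lesssim h_E^{1/2}\|\bftau_h\|_{0,E}$, whence $\|\bftau_h\|_{0,E}^2\lesssim h_E\|\bftau_h\bbn\|_{0,\partial E}^2$. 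This is only the energy (Lax--Milgram) stability of the Neumann problem, not a regularity statement, so the uniformity you worry about is automatic. The remaining ingredients are sound: by \eqref{eq:stab1} the first boundary term is $(\kappa_E h_E)^{-1}s_E\bigl((I-\Pi_E^k)\bftau_h,(I-\Pi_E^k)\bftau_h\bigr)$, and for variable $\C$ the trace-inverse bound on $T_k(E)$ follows by writing $\Pi_E^k\bftau_h=\C\teps(\bbp_{k+1})$ and using the uniform boundedness and positive-definiteness of $\C$ to pass to the polynomial $\teps(\bbp_{k+1})$ and back.
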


%

\begin{remark}\label{rm:incopmress_h}
	Our methods satisfy $K_h\subset K$, where $K$ is defined by \eqref{eq:kernel}. As discussed in \cite{Bo-Bre-For}, this property leads to schemes which do not suffer from volumetric locking (see \cite{Hughes:book}, for instance) and can be used also for nearly incompressible materials.
\end{remark}



%
%
%
%

To continue, the following discrete {\em inf-sup} condition is a consequence of the {\em commuting diagram property} (see Proposition \ref{pr:comm-prop}), and of the theory developed in \cite{ADLP_HR}. 

\begin{prop}\label{pr:inf-sup} Fix an integer $k\ge 1$. Suppose that  assumptions $\mathbf{(A1)}$ and $\mathbf{(A2)}$ are fulfilled. There exists $\beta>0$ such that
	
	\begin{equation}\label{eq:inf-sup}
		\sup_{\bftau_h\in \Sigma_h}\frac{(\bdiv \bftau_h,\bbv_h)}{|| \bftau_h||_{\Sigma}}\ge \beta ||\bbv_h||_{U}\qquad \forall\, \bbv_h\in U_h .
	\end{equation}
	
\end{prop}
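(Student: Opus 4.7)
\medskip

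\noindent\textbf{Proof plan.} The strategy is the classical Fortin argument: for every $\bbv_h\in U_h$ I would produce a single $\bftau_h\in \Sigma_h$ making $(\bdiv\bftau_h,\bbv_h)$ large and $\|\bftau_h\|_\Sigma$ controlled, and take this particular $\bftau_h$ as test function in the supremum. The two ingredients are (i) the continuous surjectivity of $\bdiv:\Sigma\to U$, and (ii) the commuting diagram property of the interpolation operator $\Ih$ established in Proposition~\ref{pr:comm-prop}.

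\medskip

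\noindent\textbf{Step 1: Continuous lift.} Given $\bbv_h\in U_h\subset L^2(\Omega)^2$, I would solve the auxiliary homogeneous Dirichlet elasticity problem
\[
-\bdiv\bigl(\C\teps(\bbw)\bigr)= \bbv_h \ \text{ in } \Omega,\qquad \bbw=\bfzero \ \text{ on } \partial\Omega,
\]
and set $\bftau:=\C\teps(\bbw)\in\widetilde\Sigma$ (see \eqref{eq:sigma-glob}). Classical theory yields $\bdiv\bftau=-\bbv_h$ together with the continuous stability bound $\|\bftau\|_\Sigma\lesssim \|\bbv_h\|_U$. Moreover, standard regularity results for elliptic systems on polygonal domains give $\bftau\in W^r(\Omega)$ (see \eqref{eq:regspace}) for some $r>2$, with $\|\bftau\|_{W^r(\Omega)}\lesssim \|\bbv_h\|_U$. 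This regularity is what makes the edge degrees of freedom of $\Ih$ meaningful and bounded.

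\medskip

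\noindent\textbf{Step 2: Discrete lift via $\Ih$.} I then set $\bftau_h:=-\Ih\bftau\in \Sigma_h$. Using Proposition~\ref{pr:comm-prop} and the fact that $\bbv_h$ is already piecewise polynomial of degree $\le k$ (so $P_h^k \bbv_h=\bbv_h$), I get
\[
\bdiv\bftau_h = -P_h^k(\bdiv\bftau) = P_h^k(\bbv_h) = \bbv_h,
\]
whence $(\bdiv\bftau_h,\bbv_h)=\|\bbv_h\|_U^2$. The remaining task is the uniform bound $\|\bftau_h\|_\Sigma\lesssim \|\bbv_h\|_U$. For this I would argue locally: on each element $E$, $\|\IE\bftau\|_{0,E}\lesssim \|\bftau\|_{W^r(E)}$ by scaling, the trace inequality on $\partial E$ under assumptions $\mathbf{(A1)}$--$\mathbf{(A2)}$, and the bounded-degree structure of $\Sigma_h(E)$, while $\|\bdiv\IE\bftau\|_{0,E}=\|P_h^k(\bdiv\bftau)\|_{0,E}\le \|\bdiv\bftau\|_{0,E}$ comes for free from the commuting property. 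Summing over $E\in\Th$ and combining with Step~1 yields $\|\bftau_h\|_\Sigma\lesssim \|\bftau\|_{W^r(\Omega)}\lesssim \|\bbv_h\|_U$, and hence
\[
\sup_{\bfeta_h\in\Sigma_h}\frac{(\bdiv\bfeta_h,\bbv_h)}{\|\bfeta_h\|_\Sigma}
\ge \frac{(\bdiv\bftau_h,\bbv_h)}{\|\bftau_h\|_\Sigma}
\ge \frac{\|\bbv_h\|_U^2}{C\|\bbv_h\|_U}=\beta\|\bbv_h\|_U.
\]

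\medskip

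\noindent\textbf{Expected main obstacle.} The delicate point is the uniform $\Sigma$-stability of $\Ih$ applied to functions of only $W^r$-regularity (with $r>2$). Naively one has only $L^2$ and $H(\bdiv)$ bounds, but the definition \eqref{eq:loc-interp_0} uses edge dualities that are not continuous on $H(\bdiv;E)$ alone; this is precisely why the space $W^r$ of \eqref{eq:regspace} was introduced. Deriving the local bound with the correct scaling in $h_E$, uniformly over the polygonal family verifying $\mathbf{(A1)}$--$\mathbf{(A2)}$, is the technical heart of the argument and is the place where I would invoke in detail the machinery already carried out in \cite{ADLP_HR}.
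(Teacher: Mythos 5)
Your proposal is correct and follows essentially the same route the paper intends: the paper itself gives no detailed proof but states that the inf-sup condition ``is a consequence of the commuting diagram property (Proposition~\ref{pr:comm-prop}) and of the theory developed in \cite{ADLP_HR}'', which is precisely your Fortin construction (continuous lift via an auxiliary elasticity problem with the $W^r$-regularity needed to apply $\Ih$, discrete lift via the commuting property, uniform stability of $\Ih$ deferred to \cite{ADLP_HR}). You have also correctly identified the genuine technical point --- the uniform $\Sigma$-stability of the interpolation operator on functions of only $W^r$ regularity --- which is exactly the part the paper outsources to the earlier work.
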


\subsection{Error estimates}\label{ss:errest}



We need the following estimate, that can be found in \cite{ADLP_HR}. However, we prove it here in details, for the sake of completeness.

\begin{lem}\label{lm:trace-est}
 Under assumptions $\mathbf{(A1)}$ and $\mathbf{(A2)}$, for every $\bftau_h\in \Sigma_h(E)$ it holds
 
\begin{equation}\label{eq:trest1}
h_E^{1/2} || (I - \Pi_E^k)\bftau_h\bbn||_{0,\partial\,E}\lesssim  || (I - \Pi_E^k)\bftau_h ||_{0,E} + h_E || \bdiv ((I - \Pi_E)\bftau_h) ||_{0,E} .
\end{equation} 
 
\end{lem}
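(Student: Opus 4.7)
Set $\bfeta:=(I-\Pi_E^k)\bftau_h$. Since $\bftau_h\in\Sigma_h(E)$ and $\Pi_E^k\bftau_h\in T_k(E)=\C\,\teps(\P_{k+1}(E)^2)$, we have $\bfeta\in H(\bdiv;E)$ with trace $\bfeta\bbn\in L^2(\partial E)^2$. My plan is a duality-plus-lifting argument: bound $\|\bfeta\bbn\|_{0,\partial E}^2$ by pairing $\bfeta\bbn$ with a suitable $H^1(E)^2$-extension of itself, then apply Green's formula to transfer everything onto $E$, where the right-hand side norms of \eqref{eq:trest1} are available.

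The key ingredient is a boundary-to-interior lifting: under $\mathbf{(A1)}$--$\mathbf{(A2)}$, there exists $\bbv\in H^1(E)^2$ with $\bbv=\bfeta\bbn$ on $\partial E$ and
\[
\|\bbv\|_{0,E}\lesssim h_E^{1/2}\|\bfeta\bbn\|_{0,\partial E},\qquad
\|\teps(\bbv)\|_{0,E}\lesssim h_E^{-1/2}\|\bfeta\bbn\|_{0,\partial E}.
\]
Using $\bbv$ as a test function in Green's formula (exploiting the symmetry of $\bfeta$)
\[
\int_{\partial E}(\bfeta\bbn)\cdot\bbv \;=\; \int_E\bdiv\bfeta\cdot\bbv \;+\; \int_E\bfeta:\teps(\bbv),
\]
and applying Cauchy--Schwarz yield
\[
\|\bfeta\bbn\|_{0,\partial E}^2 \;\le\; \|\bdiv\bfeta\|_{0,E}\|\bbv\|_{0,E}+\|\bfeta\|_{0,E}\|\teps(\bbv)\|_{0,E}
\;\lesssim\; \bigl(h_E^{1/2}\|\bdiv\bfeta\|_{0,E}+h_E^{-1/2}\|\bfeta\|_{0,E}\bigr)\|\bfeta\bbn\|_{0,\partial E}.
\]
Dividing by $\|\bfeta\bbn\|_{0,\partial E}$ and multiplying through by $h_E^{1/2}$ produces \eqref{eq:trest1}.

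The main obstacle is therefore the construction of the lifting with the two stated $h_E$-scalings on a general polygon. For boundary data that are piecewise polynomial of bounded degree on each edge, such a lifting is standard: one subdivides $E$ into shape-regular triangles (possible thanks to $\mathbf{(A1)}$--$\mathbf{(A2)}$) and builds $\bbv$ edge by edge with explicit polynomial bubble functions supported near each edge, scaling back via the reference-element trace inequality. In the present setting the trace of $\bftau_h$ is indeed edgewise polynomial, whereas $(\Pi_E^k\bftau_h)\bbn$ fails to be polynomial when $\C$ varies; the regularity assumed on $\C$ lets one freeze its value at a reference point of $E$, run the polynomial lifting for the resulting boundary datum, and absorb the first-order perturbation into the hidden constant. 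The same estimate is already proved in \cite{ADLP_HR} for $k=0$, and no essentially new idea is needed for $k\ge 1$ beyond tracking the polynomial degree.
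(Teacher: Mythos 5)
Your overall strategy --- pair $\bfeta\bbn$ with an $H^1(E)^2$ function built from it, integrate by parts, and exploit the $h_E$-scalings of that function --- is the same as the paper's. However, the key ingredient as you state it is false: in general there is \emph{no} $\bbv\in H^1(E)^2$ with $\bbv=\bfeta\bbn$ on $\partial E$, because the outward normal $\bbn$ changes direction at every vertex of $E$, so $\bfeta\bbn$ has jump discontinuities there and therefore does not belong to the trace space $H^{1/2}(\partial E)^2$ (a jump has infinite $H^{1/2}$ Gagliardo seminorm). Your own construction paragraph betrays the problem: a function assembled from edge bubbles vanishes at the vertices, so it cannot coincide with $\bfeta\bbn$ on $\partial E$, and no choice of lifting can.

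The repair is exactly what the paper does. Triangulate $E$ from the star-center given by $\mathbf{(A1)}$--$\mathbf{(A2)}$, set $\bfvarphi_{|T_e}=b_e\,\bfeta\bbn$ with $b_e$ the edge bubble of $T_e$ (so $\bfvarphi$ vanishes on the internal edges and is globally in $H^1(E)^2$), and replace the exact identity $\int_{\partial E}\bfeta\bbn\cdot\bbv=\|\bfeta\bbn\|_{0,\partial E}^2$ by the one-sided equivalence $\|\bfeta\bbn\|_{0,\partial E}^2\lesssim\int_{\partial E}b_e\,|\bfeta\bbn|^2=\int_{\partial E}\bfeta\bbn\cdot\bfvarphi$, which holds for edgewise data ranging in a fixed finite-dimensional space; this is where the polynomial structure of $\bftau_h\bbn$ on each edge, and your idea of freezing $\C$ to handle the non-polynomial trace of $\Pi_E^k\bftau_h$, genuinely enter. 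After this substitution the rest of your computation --- Green's formula with the symmetry of $\bfeta$, Cauchy--Schwarz, and the scalings $\|\bfvarphi\|_{0,E}\lesssim h_E^{1/2}\|\bfeta\bbn\|_{0,\partial E}$ and $\|\teps(\bfvarphi)\|_{0,E}\lesssim h_E^{-1/2}\|\bfeta\bbn\|_{0,\partial E}$ --- goes through essentially verbatim and reproduces the paper's proof.
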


\begin{proof}
For $\bftau_\in \Sigma_h(E)$, set $\bfxi_h := (I - \Pi_E^k)\bftau_h$. By assumptions $\mathbf{(A1)}$, take $\bbx_S\in E$ as the center of the circle with respect to which E is star-shaped. Using also $\mathbf{(A2)}$, $E$ can be regularly triangulated by joining $\bbx_S$ and the vertices of $E$, thus obtaining a set of triangles $T_e$, one per each side $e\in \partial E$. For every triangle $T_e$, let $b_e$ be the standard edge bubble for $e$ (i.e. $b_e=4\,\lambda_1\lambda_2$, if the $\lambda_i$'s are the barycentric coordinates of the two vertices of $e$). Finally, define $\bfvarphi\in H^1(E)^2$ by setting $\bfvarphi_{|T_e}= b_e\bfxi_h\bbn $. We have:

\begin{equation}\label{eq:trest2}
\begin{aligned}
|| \bfxi_h\bbn ||_{0,\partial\,E}^2  \lesssim \int_{\partial\,E} \bfxi_h\bbn\cdot \bfvarphi & =
\int_E \bdiv\bfxi_h\cdot\bfvarphi + \int_E \bfxi_h : \teps(\bfvarphi)\\
& \lesssim h_E || \bdiv\bfxi_h ||_{0,E} h_E^{-1}|| \bfvarphi ||_{0,E}
+ || \bfxi_h ||_{0,E} || \teps(\bfvarphi)||_{0,E}\\
&\lesssim \left( 
h_E || \bdiv\bfxi_h ||_{0,E} 
+ || \bfxi_h ||_{0,E} 
\right) h_E^{-1}|| \bfvarphi ||_{0,E}\\
&\lesssim \left( 
h_E || \bdiv\bfxi_h ||_{0,E} 
+ || \bfxi_h ||_{0,E} 
\right) h_E^{-1/2}|| \bfxi_h\bbn ||_{0,\partial\,E} .
\end{aligned}
\end{equation}
Hence, we get
\begin{equation}\label{eq:trest3}
h_E^{1/2}|| \bfxi_h\bbn ||_{0,\partial\,E}\lesssim 
h_E || \bdiv\bfxi_h ||_{0,E} 
+ || \bfxi_h ||_{0,E} ,
\end{equation}
which is exactly \eqref{eq:trest1}.
\end{proof}	

Another useful bound is provided in the lemma that follows.
 
\begin{lem}\label{lm:first-inverse}
	Under assumptions $\mathbf{(A1)}$ and $\mathbf{(A2)}$, for every $\bftau_h\in \Sigma_h(E)$ the following inverse estimate holds
	
	\begin{equation}\label{eq:firstinv1}
	 || \bdiv\, (\Pi_E^k\bftau_h) ||_{0,E}\lesssim h_E^{-1} || \bftau_h ||_{0,E} .
	\end{equation} 
	
\end{lem}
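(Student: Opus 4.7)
The plan is to bound $\bdiv(\Pi_E^k\bftau_h)$ by unpacking the projection via its potential representation $\Pi_E^k\bftau_h=\C\,\teps(\bbp_{k+1})$ and then invoking a polynomial inverse estimate on $\bbp_{k+1}$. If $\C$ were constant on $E$, then $\Pi_E^k\bftau_h$ itself would be a polynomial of degree $k$ and a one-line inverse estimate on polynomial spaces would close the argument; the variable-coefficient case forces a detour through the potential, which is in fact the only real obstacle.

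First I would establish the $L^2$-stability of $\Pi_E^k$. The local form $a_E(\cdot,\cdot)$ defined in \eqref{dec_a} is uniformly $L^2$-coercive and $L^2$-continuous on $\widetilde\Sigma(E)$ by the assumed uniform bounds on $\D$. Testing the defining orthogonality \eqref{eq:proj} with $\bfpi_k=\Pi_E^k\bftau_h\in T_k(E)$ gives
\begin{equation*}
\|\Pi_E^k\bftau_h\|_{0,E}^2 \lesssim a_E(\Pi_E^k\bftau_h,\Pi_E^k\bftau_h) = a_E(\bftau_h,\Pi_E^k\bftau_h) \lesssim \|\bftau_h\|_{0,E}\,\|\Pi_E^k\bftau_h\|_{0,E},
\end{equation*}
hence $\|\Pi_E^k\bftau_h\|_{0,E}\lesssim\|\bftau_h\|_{0,E}$.

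By the definition of $T_k(E)$ in \eqref{eq:pspace_0}, I would then write $\Pi_E^k\bftau_h=\C\,\teps(\bbp_{k+1})$ for some $\bbp_{k+1}\in\P_{k+1}(E)^2$ (defined up to $RM(E)$, with $\teps(\bbp_{k+1})$ uniquely determined). Expanding via the Leibniz rule,
\begin{equation*}
\bdiv\bigl(\C\,\teps(\bbp_{k+1})\bigr) = (\nabla\C):\teps(\bbp_{k+1}) + \C:\nabla\teps(\bbp_{k+1}),
\end{equation*}
and using $\|\nabla\C\|_{L^\infty(E)}\lesssim 1$ (from the regularity of $\D=\C^{-1}$) together with the polynomial inverse inequality $\|\nabla\teps(\bbp_{k+1})\|_{0,E}\lesssim h_E^{-1}\|\teps(\bbp_{k+1})\|_{0,E}$ (valid on the star-shaped polygon $E$ thanks to $\mathbf{(A1)}$), one obtains
\begin{equation*}
\|\bdiv(\Pi_E^k\bftau_h)\|_{0,E}\lesssim \bigl(1+h_E^{-1}\bigr)\|\teps(\bbp_{k+1})\|_{0,E}\lesssim h_E^{-1}\|\teps(\bbp_{k+1})\|_{0,E}.
\end{equation*}
The uniform positive-definiteness of $\C$ (equivalently, the uniform boundedness of $\D$) yields $\|\teps(\bbp_{k+1})\|_{0,E}\lesssim\|\C\,\teps(\bbp_{k+1})\|_{0,E}=\|\Pi_E^k\bftau_h\|_{0,E}$, which combined with the $L^2$-stability of the previous paragraph gives the claim.

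The main technical point, as anticipated, is exactly the variable-coefficient case: $\Pi_E^k\bftau_h$ is not polynomial when $\C$ varies in $E$, so inverse estimates cannot be applied to it directly, and one must unpack through the potential $\bbp_{k+1}$ and split via Leibniz. The price is invoking the $W^{1,\infty}$-regularity of $\C$, which is implicit in the ``sufficiently regular'' assumption on $\D$.
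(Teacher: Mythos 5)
Your proof is correct and follows essentially the same route as the paper's: write $\Pi_E^k\bftau_h=\C\,\teps(\bbp_{k+1})$, split $\bdiv(\C\,\teps(\bbp_{k+1}))$ by the Leibniz rule into a $|\C|_{W^{1,\infty}}$ term and a $|\C|_{L^{\infty}}$ term, apply the polynomial inverse inequality to $\teps(\bbp_{k+1})$, and conclude via the $L^2$-stability of the projector. The only difference is that you spell out the $L^2$-stability step $\|\Pi_E^k\bftau_h\|_{0,E}\lesssim\|\bftau_h\|_{0,E}$ and the bound $\|\teps(\bbp_{k+1})\|_{0,E}\lesssim\|\C\,\teps(\bbp_{k+1})\|_{0,E}$, which the paper leaves implicit in its final chain of inequalities.
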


\begin{proof}
Let $\Pi_E^k\, \bftau_h= \C \teps(\bbp_{k+1})$ for a suitable $\bbp_{k+1}\in \P_{k+1}(E)^2$, see \eqref{eq:proj}-\eqref{eq:proj_2}. A direct computation shows that 

\begin{equation}\label{eq:firstinv2}
|| \bdiv\, (\C \teps(\bbp_{k+1})) ||_{0,E}\lesssim 
|\C|_{W^{1,\infty}(E)}\,|| \teps(\bbp_{k+1}) ||_{0,E}
+ |\C|_{L^{\infty}(E)}\,| \teps(\bbp_{k+1}) |_{1,E} .
\end{equation} 

Since $\teps(\bbp_{k+1})$ is a polynomial of degree at most $k$ for each component, using the techniques in \cite{BLRXX}, we get | $ \teps(\bbp_{k+1}) |_{1,E}\lesssim h_E^{-1}|| \teps(\bbp_{k+1}) ||_{0,E}$. Therefore, from \eqref{eq:firstinv2}, we obtain

\begin{equation}\label{eq:firstinv3}
\begin{aligned}
|| \bdiv\, (\C \teps(\bbp_{k+1})) ||_{0,E}&\lesssim \left(
|\C|_{W^{1,\infty}(E)}
+ h_E^{-1}|\C|_{L^{\infty}(E)}\right) || \teps(\bbp_{k+1}) ||_{0,E}\\
&\lesssim 
h_E^{-1} || \teps(\bbp_{k+1}) ||_{0,E}\lesssim 
h_E^{-1} || \bftau_h ||_{0,E},
\end{aligned}
\end{equation}
and the proof is complete.
\end{proof}

We are now ready to present our main convergence result.

\begin{thm}\label{th:main_convergence} Let $k$ be an integer with $k\ge 1$, and $r$ such that $1\le r\le k+1$.
	Let $(\bfsigma,\bbu)\in\Sigma\times U$ be the solution of Problem \eqref{cont-pbl}, and let $(\bfsigma_h,\bbu_h)\in\Sigma_h\times U_h$ be the solution of the discrete problem \eqref{eq:discr-pbl-ls}. Suppose that  assumptions $\mathbf{(A1)}$ and $\mathbf{(A2)}$ are fulfilled.
	Assuming $\bfsigma$ and $\bbu$ sufficiently regular, the following estimate holds true:
	
	\begin{equation}\label{eq:main_conv-est}
	|| \bfsigma - \bfsigma_h||_{\Sigma} + || \bbu - \bbu_h||_U \lesssim h^r\, \Big( | \bfsigma |_{r} + |\bdiv\,\bfsigma|_{r} + |\C |_{W^{r,\infty}}  || \bfsigma ||_{0} + |\bbu|_{r} \Big) .
	\end{equation}
	%
\end{thm}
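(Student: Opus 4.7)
My plan is the standard mixed-method argument adapted to the VEM framework: I use the commuting property of $\Ih$ (Proposition~\ref{pr:comm-prop}) to reduce the stress error to a discrete-kernel estimate, apply ellipticity on the kernel (Proposition~\ref{pr:elker}), control the resulting consistency error by Propositions~\ref{pr:projapprox} and~\ref{pr:approxest} together with Lemmas~\ref{lm:trace-est} and~\ref{lm:first-inverse}, and finally recover the displacement error through the inf--sup condition (Proposition~\ref{pr:inf-sup}).

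\textbf{Reduction to the discrete kernel.} I set $\bfxi_h := \bfsigma_h - \Ih\bfsigma$. From the second equation in \eqref{eq:discr-pbl-ls} and the inclusion $\bdiv(\Sigma_h)\subseteq U_h$, I get $\bdiv\bfsigma_h = -P_h^k\bbf$, while the commuting diagram yields $\bdiv(\Ih\bfsigma) = P_h^k(\bdiv\bfsigma) = -P_h^k\bbf$. Hence $\bfxi_h \in K_h$, so $||\bfxi_h||_\Sigma = ||\bfxi_h||_{0}$ by \eqref{eq:l2-hdiv}. Testing the first discrete equation with $\bfxi_h$ gives $a_h(\bfsigma_h,\bfxi_h) = 0$, and Proposition~\ref{pr:elker} then implies $\alpha\,||\bfxi_h||_\Sigma^2 \le -a_h(\Ih\bfsigma,\bfxi_h)$.

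\textbf{Consistency manipulation.} The idempotency of $\Pi_E^k$ in the $a_E$-inner product gives $a_E(\Pi_E^k\Ih\bfsigma,\Pi_E^k\bfxi_h) = a_E(\Pi_E^k\Ih\bfsigma,\bfxi_h)$. Splitting $\Pi_E^k\Ih\bfsigma = \Pi_E^k(\Ih\bfsigma - \bfsigma) + \Pi_E^k\bfsigma$ and writing $\Pi_E^k\bfsigma = \bfsigma - (I-\Pi_E^k)\bfsigma$, the continuous equation $a(\bfsigma,\bfxi_h) = 0$ (valid since $\bdiv\bfxi_h = \bfzero$) supplies the crucial cancellation, producing
\[
-a_h(\Ih\bfsigma,\bfxi_h) = \sum_{E\in\Th}\Big[-a_E(\Pi_E^k(\Ih\bfsigma-\bfsigma),\bfxi_h) + a_E((I-\Pi_E^k)\bfsigma,\bfxi_h) - s_E((I-\Pi_E^k)\Ih\bfsigma,(I-\Pi_E^k)\bfxi_h)\Big].
\]
By Cauchy--Schwarz the first two terms are controlled by $||\bfxi_h||_{0,E}$ times appropriate approximation errors: Proposition~\ref{pr:approxest} gives $||\Ih\bfsigma - \bfsigma||_{0,E} \lesssim h_E^r|\bfsigma|_{r,E}$, and Proposition~\ref{pr:projapprox} gives $||(I-\Pi_E^k)\bfsigma||_{0,E} \lesssim h_E^r|\bbw|_{r+1,E}$. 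A Leibniz-type estimate applied to $\teps(\bbw) = \D\bfsigma$ converts $|\bbw|_{r+1,E}$ into $|\bfsigma|_{r,E} + |\C|_{W^{r,\infty}(E)}\,||\bfsigma||_{0,E}$, which is where the variable-coefficient contribution to \eqref{eq:main_conv-est} enters.

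\textbf{The stabilization bound is the main obstacle.} Applying Lemma~\ref{lm:trace-est} in both slots and observing that for $\bfxi_h \in K_h$ one has $\bdiv((I-\Pi_E^k)\bfxi_h) = -\bdiv(\Pi_E^k\bfxi_h)$, so that Lemma~\ref{lm:first-inverse} yields $h_E^{1/2}||(I-\Pi_E^k)\bfxi_h\,\bbn||_{0,\partial E}\lesssim ||\bfxi_h||_{0,E}$, the $\bfxi_h$ slot is already in the right form. For the $\Ih\bfsigma$ slot I split $(I-\Pi_E^k)\Ih\bfsigma = (I-\Pi_E^k)(\Ih\bfsigma - \bfsigma) + (I-\Pi_E^k)\bfsigma$. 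The $L^2$ contribution follows from Propositions~\ref{pr:approxest} and~\ref{pr:projapprox}; the divergence contribution is the delicate piece, and I decompose it further: the commuting diagram gives $\bdiv(\Ih\bfsigma - \bfsigma) = (P_h^k - I)(\bdiv\bfsigma)$ treated by standard polynomial approximation, $\bdiv(\Pi_E^k(\Ih\bfsigma - \bfsigma))$ is bounded by Lemma~\ref{lm:first-inverse} combined with the $L^2$-stability of $\Pi_E^k$, and $\bdiv((I-\Pi_E^k)\bfsigma) = \bdiv(\C\teps(\bbw - \bbp_{k+1}))$ is bounded by a Leibniz estimate on top of the Galerkin error bounds for the Neumann problem \eqref{eq:proj-neumann}. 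Both $|\bdiv\bfsigma|_{r}$ and a further $|\C|_{W^{r,\infty}}$ contribution make their appearance at this step. Collecting everything gives $||\bfxi_h||_{0}\lesssim h^r\,\Theta$ with $\Theta$ as in the right-hand side of \eqref{eq:main_conv-est}, and the triangle inequality together with Proposition~\ref{pr:approxest} closes the stress estimate.

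\textbf{Displacement estimate.} Proposition~\ref{pr:inf-sup} gives $\beta\,||\bbu_h - P_h^k\bbu||_U \le \sup_{\bftau_h\in\Sigma_h}(\bdiv\bftau_h, \bbu_h - P_h^k\bbu)/||\bftau_h||_\Sigma$. Since $\bdiv\bftau_h \in U_h$, $L^2$-orthogonality of $P_h^k$ lets me replace $P_h^k\bbu$ by $\bbu$ in the numerator. Subtracting the first discrete equation from the first continuous one yields $(\bdiv\bftau_h,\bbu_h - \bbu) = a(\bfsigma,\bftau_h) - a_h(\bfsigma_h,\bftau_h)$, which I decompose exactly as in the stress analysis and bound by $||\bftau_h||_\Sigma$ times an $O(h^r)$ factor depending on the regularity terms on the right-hand side of \eqref{eq:main_conv-est} and on the already-controlled quantity $||\bfsigma - \bfsigma_h||_\Sigma$. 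A triangle inequality with $||\bbu - P_h^k\bbu||_U \lesssim h^r|\bbu|_r$ completes the proof.
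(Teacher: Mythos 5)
Your argument is correct, and its skeleton coincides with the paper's: reduction of $\bfsigma_h-\Ih\bfsigma$ to the discrete kernel via the commuting diagram, ellipticity on the kernel, a consistency split exploiting the $a_E$-orthogonality of $\Pi_E^k$, control of the stabilization through Lemmas \ref{lm:trace-est} and \ref{lm:first-inverse}, and the \emph{inf-sup} argument for the displacement. The one genuine divergence is in the treatment of the critical term \eqref{eq:hardterm}, which is exactly where the variable coefficients bite (by Remark \ref{rm:projrem}, $\Pi_E^k\bfsigma_I$ is not a polynomial, so no bare inverse estimate applies). The paper introduces a polynomial approximation $\C^k_h$ of the elasticity tensor satisfying \eqref{eq:er-ests5} and splits the term as $D_1+D_2$ in \eqref{eq:er-ests6}, so that the inverse estimate in $D_1$ acts on the genuine polynomial $\bdiv(\bfsigma_I-\C^k_h\teps(\bbq))$ and $D_2$ is a Leibniz computation on the polynomial $\teps(\bbq)$; only $L^2$-type bounds on the projection error are ever used. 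You instead decompose through $\bfsigma$: the commuting diagram handles $\bdiv(\Ih\bfsigma-\bfsigma)=(P_h^k-I)(\bdiv\bfsigma)$, Lemma \ref{lm:first-inverse} handles $\bdiv(\Pi_E^k(\Ih\bfsigma-\bfsigma))$ --- note that this requires extending the lemma from $\Sigma_h(E)$ to $\widetilde\Sigma(E)$, which its proof permits since it only uses $\Pi_E^k\bftau=\C\teps(\bbp_{k+1})$ and the stability $\|\teps(\bbp_{k+1})\|_{0,E}\lesssim\|\bftau\|_{0,E}$ --- and the remaining piece $\bdiv(\C\teps(\bbw-\bbp_{k+1}))$ requires, after Leibniz, the bound $|\teps(\bbw-\bbp_{k+1})|_{1,E}\lesssim h_E^{r-1}|\bbw|_{r+1,E}$, i.e.\ a second-order error estimate for the Galerkin solution of \eqref{eq:proj-neumann} that is not contained in Proposition \ref{pr:projapprox} and should be stated and justified (it follows from the energy estimate combined with an inverse inequality and best polynomial approximation). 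In short, your route avoids introducing $\C^k_h$ at the price of needing approximation theory beyond $L^2$ for the local Neumann problem, while the paper's route does the opposite; both lead to \eqref{eq:main_conv-est}.
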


\begin{proof}
We first consider any approximated material tensor $\C^k_h$, for which it holds

\begin{equation}\label{eq:er-ests5}
h_E\, |\C -\C^k_h|_{W^{1,\infty}(E)} +  ||\C -\C^k_h||_{L^{\infty}(E)}  \lesssim h_E^r |\C |_{W^{r,\infty}(E)}\ ,  \quad 1\le r\le k+1, \quad \forall E\in \Th.
\end{equation} 
For instance, on each $E\in\Th$, one may take $\C^k_h$ as the component-wise averaged Taylor expansion of $\C_h$ (see \cite{Brenner_Scott}, for example).

Take now $\bfsigma_I=\Ih\bfsigma$, and notice that $(\bfsigma_h -\bfsigma_I) \in K_h\subseteq K$. Hence, using \eqref{cont-pbl} and \eqref{eq:discr-pbl-ls}, we infer 

$$
a(\bfsigma, \bfsigma_h -\bfsigma_I ) = a_h(\bfsigma_I, \bfsigma_h -\bfsigma_I) = 0 . 
$$
Therefore, it holds

\begin{equation}\label{eq:er-ests1}
\begin{aligned}
||\bfsigma_h -& \bfsigma_I||_\Sigma^2  =
||\bfsigma_h -\bfsigma_I||_0^2  \lesssim a_h(\bfsigma_h -\bfsigma_I, \bfsigma_h -\bfsigma_I )\\
& = a(\bfsigma, \bfsigma_h -\bfsigma_I ) - a_h(\bfsigma_I, \bfsigma_h -\bfsigma_I)\\
& = a(\bfsigma -\Pi_h\bfsigma_I,\bfsigma_h -\bfsigma_I) - \sum_{E\in\Th}s_E ( (I-\Pi_E^k)\bfsigma_I, (I-\Pi_E^k)(\bfsigma_h -\bfsigma_I)  )\\
& = T_1 + T_2,
\end{aligned}
\end{equation}
where we have denoted with $\Pi_h^k$ the global projector made up by the local contributions $\Pi_E^k$, see \eqref{eq:proj}. For the term $T_1$, we simply have (cf. \eqref{eq:l2est-proj}):

\begin{equation}\label{eq:er-ests2.0}
T_1 \lesssim || \bfsigma -\Pi_h\bfsigma_I  ||_{0} \, || \bfsigma_h -\bfsigma_I  ||_{0} \lesssim h^r \, |\bfsigma|_r
\, || \bfsigma_h -\bfsigma_I  ||_{0}\lesssim h^r \, |\bfsigma|_r
\, || \bfsigma_h -\bfsigma_I  ||_{\Sigma}.
\end{equation}

The term $T_2$ is more involved. Let us treat it locally, on each polygon $E$.
Using Lemma \ref{lm:trace-est}, we get

\begin{equation}\label{eq:er-ests2}
\begin{aligned}
- s_E ( (I-\Pi_E^k)&\bfsigma_I,  (I-\Pi_E^k)(\bfsigma_h -\bfsigma_I)  )\\ &\lesssim h_E^{1/2} || (I - \Pi_E^k)\bfsigma_I\bbn||_{0,\partial\,E}
 \, h_E^{1/2} || (I - \Pi_E^k)(\bfsigma_h-\bfsigma_I)\bbn||_{0,\partial\,E}\\
& \lesssim \Big( || (I - \Pi_E^k)\bfsigma_I ||_{0,E} + h_E || \bdiv ((I - \Pi_E^k)\bfsigma_I) ||_{0,E} \Big) \times \\
&\quad \Big( || (I - \Pi_E^k)(\bfsigma_h-\bfsigma_I) ||_{0,E} + h_E || \bdiv ((I - \Pi_E^k)(\bfsigma_h-\bfsigma_I)) ||_{0,E} \Big)
\end{aligned}
\end{equation}

Recalling that $\bdiv (\bfsigma_h-\bfsigma_I)=\bfzero$ and using Lemma \ref{lm:first-inverse}, we infer

\begin{equation}\label{eq:er-ests3}
 h_E || \bdiv ((I - \Pi_E^k)(\bfsigma_h-\bfsigma_I)) ||_{0,E} \lesssim || \bfsigma_h-\bfsigma_I ||_{0,E} .
\end{equation}

The $L^2$ continuity of $\Pi_E^k$, combined with \eqref{eq:er-ests2} and \eqref{eq:er-ests3}, gives

\begin{equation}\label{eq:er-ests4}
\begin{aligned}
- s_E ( (I-\Pi_E^k)&\bfsigma_I,  (I-\Pi_E^k)(\bfsigma_h -\bfsigma_I)  )\\ 
& \lesssim \Big( || (I - \Pi_E^k)\bfsigma_I ||_{0,E} + h_E || \bdiv ((I - \Pi_E^k)\bfsigma_I) ||_{0,E} \Big) 
 || \bfsigma_h-\bfsigma_I ||_{0,E} \\
& \lesssim \Big( || (I - \Pi_E^k)\bfsigma_I ||_{0,E} + h_E || \bdiv ((I - \Pi_E^k)\bfsigma_I) ||_{0,E} \Big) 
|| \bfsigma_h-\bfsigma_I ||_{\Sigma(E)}. 
\end{aligned}
\end{equation} 

We now estimate $|| (I - \Pi_E^k)\bfsigma_I ||_{0,E} + h_E || \bdiv ((I - \Pi_E^k)\bfsigma_I) ||_{0,E} $. We first have

\begin{equation}\label{eq:er-ests4bis}
\begin{aligned}
|| (I - \Pi_E^k)\bfsigma_I ||_{0,E} &\leq || (I - \Pi_E^k)(\bfsigma_I-\bfsigma) ||_{0,E} + || (I - \Pi_E^k) \bfsigma ||_{0,E} \\
&\lesssim  || \bfsigma_I-\bfsigma ||_{0,E} + h_E^r\,| \bfsigma |_{r,E}\lesssim h_E^r\,| \bfsigma |_{r,E},
\end{aligned}
\end{equation}  
where we have also used estimates \eqref{eq:l2est-proj} and \eqref{eq:l2est}. To treat the term 

\begin{equation} \label{eq:hardterm}
h_E || \bdiv ((I - \Pi_E^k)\bfsigma_I) ||_{0,E}
\end{equation} 
we argue as follows.

Let $\bbq\in \P_{k+1}(E)^2 $ be such that $\C\,\teps(\bbq)=\Pi_E^k\bfsigma_I$. 
We write

\begin{equation}\label{eq:er-ests6}
\begin{aligned}
h_E\, ||& \bdiv ((I  - \Pi_E^k)\bfsigma_I) ||_{0,E} =h_E\, || \bdiv\, \bfsigma_I - \bdiv\,(\C\teps(\bbq)) ||_{0,E} \\
& \leq h_E\, || \bdiv\, (\bfsigma_I - \C^k_h\teps(\bbq)) ||_{0,E}  +h_E\, || \bdiv\,((\C^k_h-\C)\teps(\bbq)) ||_{0,E}= D_1 + D_2 .
\end{aligned}
\end{equation} 
Since $\bdiv\, (\bfsigma_I - \C^k_h\teps(\bbq))$ is polynomial in $E$, the techniques of \cite{BLRXX} can be used to get the inverse estimate

\begin{equation}\label{eq:er-ests7}
D_1\lesssim  || \bfsigma_I - \C^k_h\teps(\bbq) ||_{0,E} .
\end{equation}
Since, using \eqref{eq:l2est-proj}, \eqref{eq:l2est} and \eqref{eq:er-ests5}, we get 

\begin{equation}\label{eq:er-ests8}
\begin{aligned}
|| \bfsigma_I & - \C^k_h\teps(\bbq) ||_{0,E}  \leq || \bfsigma_I - \Pi_E^k\bfsigma_I ||_{0,E} + ||(\C - \C^k_h)\teps(\bbq) ||_{0,E} \\
&  \leq || (I - \Pi_E^k)(\bfsigma_I - \bfsigma ) ||_{0,E} + || (I - \Pi_E^k)\bfsigma ||_{0,E} + h_E^r |\C |_{W^{r,\infty}(E)}\, ||\teps(\bbq)||_{0,E} \\
&\lesssim || \bfsigma_I - \bfsigma  ||_{0,E} + h_E^r | \bfsigma |_{r,E} 
+ h_E^r |\C |_{W^{r,\infty}(E)}\, ||\bfsigma_I||_{0,E}
\lesssim h_E^r (1+ |\C |_{W^{r,\infty}(E)}) | \bfsigma |_{r,E} , 
\end{aligned}
\end{equation}
we infer that it holds

\begin{equation}\label{eq:er-ests9}
D_1\lesssim  h_E^r (1+ |\C |_{W^{r,\infty}(E)}) | \bfsigma |_{r,E}\lesssim  h_E^r  | \bfsigma |_{r,E}.
\end{equation}

To treat $D_2$, a direct computation shows that

\begin{equation}\label{eq:er-ests10}
D_2\lesssim  h_E\, \left( |\C -\C^k_h|_{W^{1,\infty}(E)} || \bfsigma_I ||_{0,E} +  ||\C -\C^k_h||_{L^{\infty}(E)}  | \teps(\bbq) |_{1,E} \right) .
\end{equation} 
Using an inverse estimate for polynomials on polygons, see \cite{BLRXX}, and estimate \eqref{eq:er-ests5}, we get

\begin{equation}\label{eq:er-ests11}
D_2\lesssim  h_E\,  |\C -\C^k_h|_{W^{1,\infty}(E)} || \bfsigma_I ||_{0,E} +  ||\C -\C^k_h||_{L^{\infty}(E)}  | \teps(\bbq) |_{0,E} 
\lesssim  h_E^r |\C |_{W^{r,\infty}(E)} \, || \bfsigma_I ||_{0,E} .
\end{equation} 
From estimate \eqref{eq:l2est} and the triangle inequality we obtain

\begin{equation}\label{eq:er-ests12}
D_2\lesssim   h_E^r |\C |_{W^{r,\infty}(E)} \, \left( || \bfsigma ||_{0,E} +h_E^r \, | \bfsigma |_{r,E} \right) .
\end{equation}

Combining \eqref{eq:er-ests6}, \eqref{eq:er-ests9} and \eqref{eq:er-ests12}, we have

\begin{equation}\label{eq:er-ests13.0}
h_E\, || \bdiv ((I  - \Pi_E^k)\bfsigma_I) ||_{0,E} \lesssim
h_E^r\, \Big( | \bfsigma |_{r,E} + |\C |_{W^{r,\infty}(E)}  || \bfsigma ||_{0,E}  \Big) 
\end{equation}
From estimate \eqref{eq:er-ests4}, \eqref{eq:er-ests4bis} and \eqref{eq:er-ests13.0} we deduce

\begin{equation}\label{eq:er-ests13}
 - s_E ( (I-\Pi_E^k) \bfsigma_I,  (I-\Pi_E^k)(\bfsigma_h -\bfsigma_I)  ) \lesssim h_E^r\, \Big( | \bfsigma |_{r,E} + |\C |_{W^{r,\infty}(E)}  || \bfsigma ||_{0,E}  \Big) 
|| \bfsigma_h-\bfsigma_I ||_{\Sigma(E)}.
\end{equation} 
Summing up all the local contributions (cf. also \eqref{eq:er-ests1}), one gets

\begin{equation}\label{eq:er-ests14}
T_2 \lesssim h^r\, \Big( | \bfsigma |_{r} + |\C |_{W^{r,\infty}}  || \bfsigma ||_{0}  \Big) 
|| \bfsigma_h-\bfsigma_I ||_{\Sigma}.
\end{equation}

Using \eqref{eq:er-ests2.0}, \eqref{eq:er-ests14}, from \eqref{eq:er-ests1} we infer

\begin{equation}\label{eq:er-ests15}
|| \bfsigma_h-\bfsigma_I ||_{\Sigma} \lesssim h^r\, \Big( | \bfsigma |_{r} + |\C |_{W^{r,\infty}}  || \bfsigma ||_{0}  \Big) 
.
\end{equation}
The triangle inequality and estimates \eqref{eq:l2est}-\eqref{eq:divest} now give

\begin{equation}\label{eq:er-ests16}
|| \bfsigma-\bfsigma_h ||_{\Sigma} \lesssim h^r\, \Big( | \bfsigma |_{r} + |\bdiv\,\bfsigma|_{r} + \C |_{W^{r,\infty}}  || \bfsigma ||_{0}  \Big) 
.
\end{equation} 

We now estimate $|| \bbu -\bbu_h ||_U$. We set $\bbu_I$ as the $L^2$-projection of $\bbu$ onto the subspace $U_h$. The {\em inf-sup} condition \eqref{eq:inf-sup} implies that there exists $\bftau_h\in \Sigma_h$ such that

\begin{equation}\label{eq:er-ests17}
|| \bbu_h-\bbu_I ||_{U} \lesssim (\bdiv\bftau_h,\bbu_h-\bbu_I)=
(\bdiv\bftau_h,\bbu_h-\bbu)\ ,  \qquad \bftau_h\lesssim 1.
\end{equation}
Using \eqref{eq:discr-pbl-ls} and \eqref{cont-pbl}, we get

\begin{equation}\label{eq:er-ests18}
\begin{aligned}
|| \bbu_h-\bbu_I ||_{U} \lesssim 
a(\bfsigma,\bftau_h) - a_h(\bfsigma_h,\bftau_h).
\end{aligned}
\end{equation} 
To estimate the right-hand side of \eqref{eq:er-ests18}, we proceed locally on each polygon $E$. We thus have to consider

\begin{equation}\label{eq:er-ests19}
a_E(\bfsigma,\bftau_h) - a_E(\Pi_E^k\bfsigma_h,\Pi_E^k\bftau_h) -
  s_E ( (I-\Pi_E^k) \bfsigma_h,  (I-\Pi_E^k)\bftau_h  ).
\end{equation}
It is immediate to see that

\begin{equation}\label{eq:er-ests20}
a_E(\bfsigma,\bftau_h) - a_E(\Pi_E^k\bfsigma_h,\Pi_E^k\bftau_h) \lesssim ||
\bfsigma -\bfsigma_h||_{\Sigma(E)}\, ||\bftau_h||_{\Sigma(E)} .
\end{equation}
Similarly to \eqref{eq:er-ests2}, we have

\begin{equation}\label{eq:er-ests21}
\begin{aligned}
- s_E ( (I-\Pi_E^k)&\bfsigma_h,  (I-\Pi_E^k)\bftau_h  ) \\
& \lesssim \Big( || (I - \Pi_E^k)\bfsigma_h ||_{0,E} + h_E || \bdiv ((I - \Pi_E^k)\bfsigma_h ||_{0,E} \Big) \times \\
&\quad \Big( || (I - \Pi_E^k)\bftau_h ||_{0,E} + h_E || \bdiv ((I - \Pi_E^k)\bftau_h ||_{0,E} \Big) .
\end{aligned}
\end{equation}
Using the same arguments as in \eqref{eq:er-ests10}-\eqref{eq:er-ests11}, we get

 \begin{equation}\label{eq:er-ests22}
 h_E || \bdiv ((I - \Pi_E^k)\bftau_h ||_{0,E} \lesssim  h_E || \bdiv \bftau_h ||_{0,E} + ||  \bftau_h ||_{0,E} \lesssim ||\bftau_h||_{\Sigma(E)} .
 \end{equation}
Due to the $L^2$-continuity of $\Pi_E^k$, we deduce

 \begin{equation}\label{eq:er-ests23}
|| (I - \Pi_E^k)\bftau_h ||_{0,E} + h_E || \bdiv ((I - \Pi_E^k)\bftau_h) ||_{0,E} \lesssim ||\bftau_h||_{\Sigma(E)} .
\end{equation}
The term $|| (I - \Pi_E^k)\bfsigma_h ||_{0,E} + h_E || \bdiv ((I - \Pi_E^k)\bfsigma_h ||_{0,E}$ can be treated by adding and subtracting $\bfsigma_I$, and then by using \eqref{eq:er-ests3}, \eqref{eq:er-ests4bis} \eqref{eq:er-ests13.0}. Hence, we have

\begin{equation}\label{eq:er-ests24}
\begin{aligned}
|| (I - \Pi_E^k)& \bfsigma_h ||_{0,E} + h_E || \bdiv ((I - \Pi_E^k)\bfsigma_h ||_{0,E}\\ 
& \leq  || (I - \Pi_E^k) (\bfsigma_h -\bfsigma_I)||_{0,E} + h_E || \bdiv ((I - \Pi_E^k)(\bfsigma_h - \bfsigma_I) ||_{0,E}\\
& \qquad + || (I - \Pi_E^k) \bfsigma_I ||_{0,E} + h_E || \bdiv ((I - \Pi_E^k)\bfsigma_I ||_{0,E}\\
& \lesssim ||\bfsigma_h -\bfsigma_I ||_{\Sigma(E)} +
h_E^r\, \Big( | \bfsigma |_{r,E} + |\C |_{W^{r,\infty}(E)}  || \bfsigma ||_{0,E}  \Big).
\end{aligned}
\end{equation} 
From \eqref{eq:er-ests21}, \eqref{eq:er-ests23} and \eqref{eq:er-ests23}, summing over all the contributions and using \eqref{eq:er-ests15}, we deduce

\begin{equation}\label{eq:er-ests25}
- \sum_{E \in \Th} s_E ( (I-\Pi_E^k)\bfsigma_h,  (I-\Pi_E^k)\bftau_h  ) \lesssim h^r\, \Big( | \bfsigma |_{r} + |\C |_{W^{r,\infty}}  || \bfsigma ||_{0}  \Big)||\bftau_h||_{\Sigma}.
\end{equation}
From \eqref{eq:er-ests18}, \eqref{eq:er-ests20} \eqref{eq:er-ests25}, recalling that $||\bftau_h||_{\Sigma}\lesssim 1$, and using \eqref{eq:er-ests16}, we get

\begin{equation}\label{eq:er-ests26}
\begin{aligned}
|| \bbu_h-\bbu_I ||_{U} \lesssim h^r\, \Big( | \bfsigma |_{r} + |\bdiv\,\bfsigma|_{r} + \C |_{W^{r,\infty}}  || \bfsigma ||_{0}  \Big) .
\end{aligned}
\end{equation}
Now, the triangle inequality, standard approximation estimates, together with bounds \eqref{eq:er-ests16} and \eqref{eq:er-ests26}, give \eqref{eq:main_conv-est}.
\end{proof}

%

%

Supposing full regularity of the analytical solution, Theorem \ref{th:main_convergence}
obviously implies the convergence of order $k+1$:

\begin{cor}\label{cor:main_convergence} Let $k$ be an integer with $k\ge 1$.
	Let $(\bfsigma,\bbu)\in\Sigma\times U$ be sufficiently regular. Let $(\bfsigma_h,\bbu_h)\in\Sigma_h\times U_h$ be the solution of the discrete problem \eqref{eq:discr-pbl-ls}. Suppose that  assumptions $\mathbf{(A1)}$ and $\mathbf{(A2)}$ are fulfilled. Then

	\begin{equation}\label{eq:coroll-est}
	|| \bfsigma - \bfsigma_h||_{\Sigma} + || \bbu - \bbu_h||_U \lesssim h^{k+1}.
	\end{equation}
	%
\end{cor}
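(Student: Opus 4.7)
The plan is simply to invoke Theorem \ref{th:main_convergence} at the highest admissible regularity index, namely $r = k+1$. The theorem gives the bound
\begin{equation*}
|| \bfsigma - \bfsigma_h||_{\Sigma} + || \bbu - \bbu_h||_U \lesssim h^r\, \Big( | \bfsigma |_{r} + |\bdiv\,\bfsigma|_{r} + |\C |_{W^{r,\infty}}  || \bfsigma ||_{0} + |\bbu|_{r} \Big)
\end{equation*}
for any $1 \le r \le k+1$. Since the statement assumes that $(\bfsigma,\bbu)$ is ``sufficiently regular'', we are entitled to pick $r = k+1$ and to regard all the semi-norms appearing on the right-hand side as finite constants independent of the mesh-size $h$. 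The elasticity tensor $\C$ was already assumed to be ``sufficiently regular'' in Section \ref{sec:1}, so $|\C|_{W^{k+1,\infty}}$ is bounded as well.

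Hence the only thing to do is to set $r = k+1$ and absorb the fixed quantity $| \bfsigma |_{k+1} + |\bdiv\,\bfsigma|_{k+1} + |\C|_{W^{k+1,\infty}} || \bfsigma||_0 + |\bbu|_{k+1}$ into the hidden constant of the symbol $\lesssim$, which yields \eqref{eq:coroll-est}. There is no genuine obstacle here: the real work was already done in Theorem \ref{th:main_convergence}, where the interpolation estimates of Proposition \ref{pr:approxest}, the projection estimate of Proposition \ref{pr:projapprox}, and the treatment of the variable coefficient term $D_2$ (via the approximation $\C_h^k$ of $\C$ satisfying \eqref{eq:er-ests5}) were combined; the corollary is merely the specialization of that bound to the optimal exponent.
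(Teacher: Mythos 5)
Your proposal is correct and coincides with the paper's own argument: the paper presents the corollary as an immediate consequence of Theorem \ref{th:main_convergence}, obtained exactly as you do by choosing $r=k+1$ and absorbing the fixed regularity-dependent quantities $|\bfsigma|_{k+1}$, $|\bdiv\,\bfsigma|_{k+1}$, $|\C|_{W^{k+1,\infty}}\,\|\bfsigma\|_0$ and $|\bbu|_{k+1}$ into the constant hidden in $\lesssim$. Nothing further is needed.
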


\begin{remark}\label{rm:split-err}
	Another consequence of $K_h\subset K$ (cf. Remark \ref{rm:incopmress_h})
	is that the error estimate on the stress field does not depend on the displacement approximation space, see \eqref{eq:er-ests16}. For details about such a point in an abstract framework, we refer to \cite{Bo-Bre-For}, for instance.
\end{remark}

\section{Numerical results}\label{s:numer}
In this section the proposed methodology is tested by assessing its accuracy on a selection of problems. Numerical results confirm the soundness of the proposed approach and its optimal performance.

\subsection{Accuracy assessment}
\label{ss:analytical}
Three boundary value problems are considered on the unit square domain $\Omega = [0, 1]^2$ assumed to be in plane strain regime.
Firstly, the material is assumed to be homogeneous and isotropic with material parameters assigned in terms of the Lam\'e constants, here set as $\lambda = 1$ and $\mu = 1$ \cite{BeiraoLovaMora,ABLS_part_I}. A required solution is chosen in terms of displacement fields and the corresponding body load $\bbf$ is consequently obtained as synthetically indicated in the following:
\begin{itemize}
	\item[$\bullet$]
	Test $a$
	\begin{eqnarray}
		\label{eq:test_2a_sol}
		\left\{
		\begin{array}{l}
			u_1 = x^3 - 3 x y^2 \\
			u_2 = y^3 - 3 x^2 y \\
			\bbf = \bfzero
		\end{array}
		\right .
	\end{eqnarray}
	\item[$\bullet$]
	Test $b$
	\begin{eqnarray}
		\label{eq:test_2b_sol}
		\left\{
		\begin{array}{l}
			u_1 = u_2 = \sin(\pi x)  \sin(\pi y) \\
			f_1 = f_2 = -\pi^2 \left[ -(3 \mu + \lambda ) \sin(\pi x) \sin ( \pi y) + ( \mu + \lambda ) \cos ( \pi x) \cos ( \pi y ) \right] .
		\end{array}
		\right.
	\end{eqnarray}
\end{itemize}
It can be noticed that Test $a$ is a problem with polynomial solution, non-homogeneous Dirichlet boundary conditions and zero loading; whereas Test $b$ has a trigonometric solution, homogeneous Dirichlet boundary conditions and trigonometric distributed loads.

In addition, a case with non-homogeneous material properties is analysed and denoted as Test $c$ in the following. Such a case shares the same analytical solution adopted for Test $a$ in terms of displacement fields, but material properties are imposed as:
\begin{equation}
	\lambda = \mu = 1 - d_c^2(\mathbf{x}),
\end{equation}
\noindent being $\mathbf{x}$ the position vector and $d_c^2$ the square distance from the centre of the domain (i.e. the point $\mathbf{x} = [0.5, 0.5]$).

Eight meshes characterized by different element topologies and distortion (see Fig. \ref{fig:meshes}) are adopted in order to assess the robustness of the proposed approach. The first four meshes are {\em structured} and composed of triangles, quadrilaterals, hexagons and a mix of convex and concave quadrilaterals, respectively. Such meshes are denoted in the following by the letter "S". The second four meshes, representing {\em unstructured} analogues of the first ones, are composed of triangles, quadrilaterals, random polygons and a mix of convex and concave hexagons. Such meshes are denoted in the following by the letter "U". The mesh size parameter used in the following numerical tests is chosen as the average edge length, denoted with $\bar{h}_e$. It is noticed that, under mesh assumptions $\mathbf{(A1)}$ and $\mathbf{(A2)}$ and for a quasi-uniform family of mesh, $\bar{h}_e$ is equivalent to both $h_E$ and $h$. It should be remarked that, with the exception of Test $c$, the same meshes and tests have been used by the authors for the assessment of the low order counterpart of the present general formulation, see \cite{ADLP_HR}.

The following error norms are used in order to asses the accuracy and the convergence rate:

\begin{itemize}
	\item[$\bullet$] Discrete error norms for the stress field:
	\begin{equation}
		\label{eq:stress_err_norm}
		E_{\bfsigma}   :=\left( \sum_{e \in \Eh} |e|\int_{e} \kappa\ | (\bfsigma - \bfsigma_h)\bbn  |^2\right)^{1/2} ,
	\end{equation}
	where $\kappa=\frac{1}{2} {\rm tr}(\D)$.
	We remark that the quantity above scales like the internal elastic energy, with respect to the size of the domain and of the elastic coefficients.  	
	
	We make also use of the $L^2$ error on the divergence:
	
	\begin{equation}
		\label{eq:stress_div_err_norm}
		E_{\bfsigma, \bdiv}   :=\left( \sum_{E \in \Th} \int_E | \bdiv(\bfsigma - \bfsigma_h)  |^2\right)^{1/2} .
	\end{equation}

	\item[$\bullet$] $L^2$ error norm for the displacement field:
	\begin{equation}
		\label{eq:displ_err_norm}
		E_{\bbu} :=\left( \sum_{E \in \Th} \int_{E}  | \bbu - \bbu_h |^2 \right)^{1/2} = ||\bbu - \bbu_h ||_0.
	\end{equation}
	
\end{itemize}

\begin{figure}[h!]
	\centering
	\renewcommand{\thesubfigure}{}
	\subfigure[Tri (S)]{\includegraphics[width=0.24\textwidth,trim = 20mm 3mm 25mm 3mm, clip]{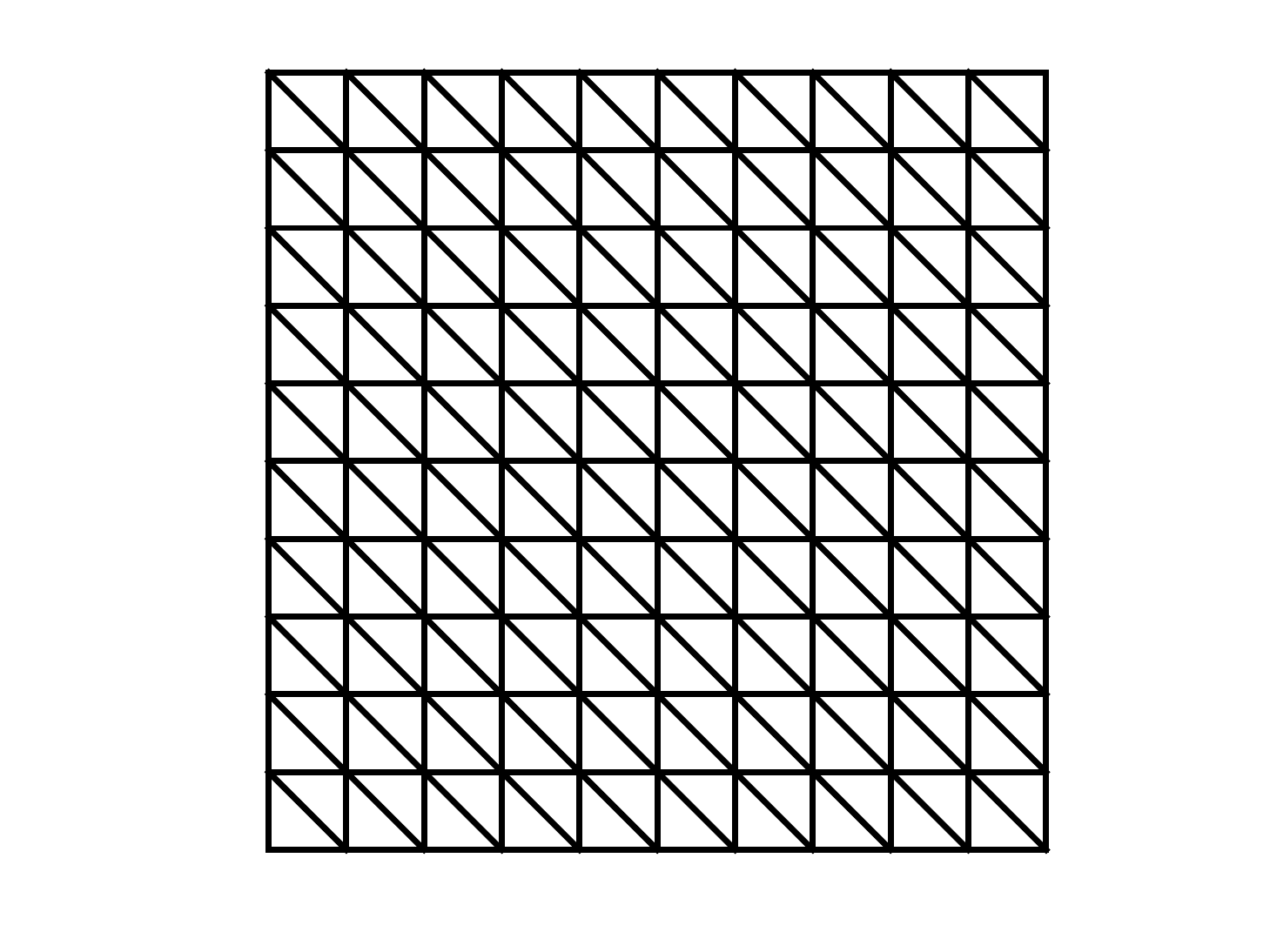}}
	\subfigure[Quad (S)]{\includegraphics[width=0.24\textwidth,trim = 20mm 3mm 25mm 3mm, clip]{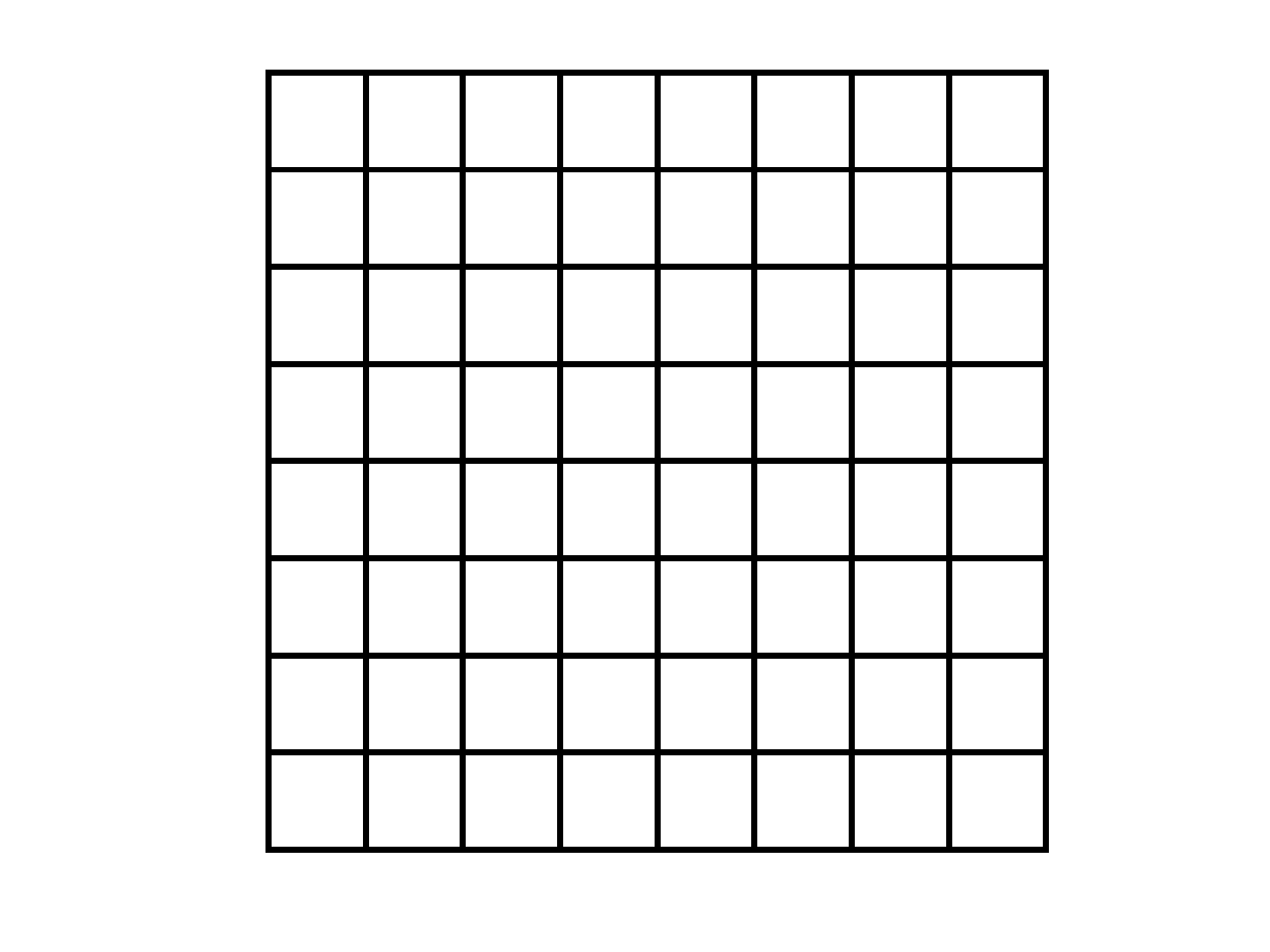}}
	\subfigure[Hex (S)]{\includegraphics[width=0.24\textwidth,trim = 20mm 3mm 25mm 3mm, clip]{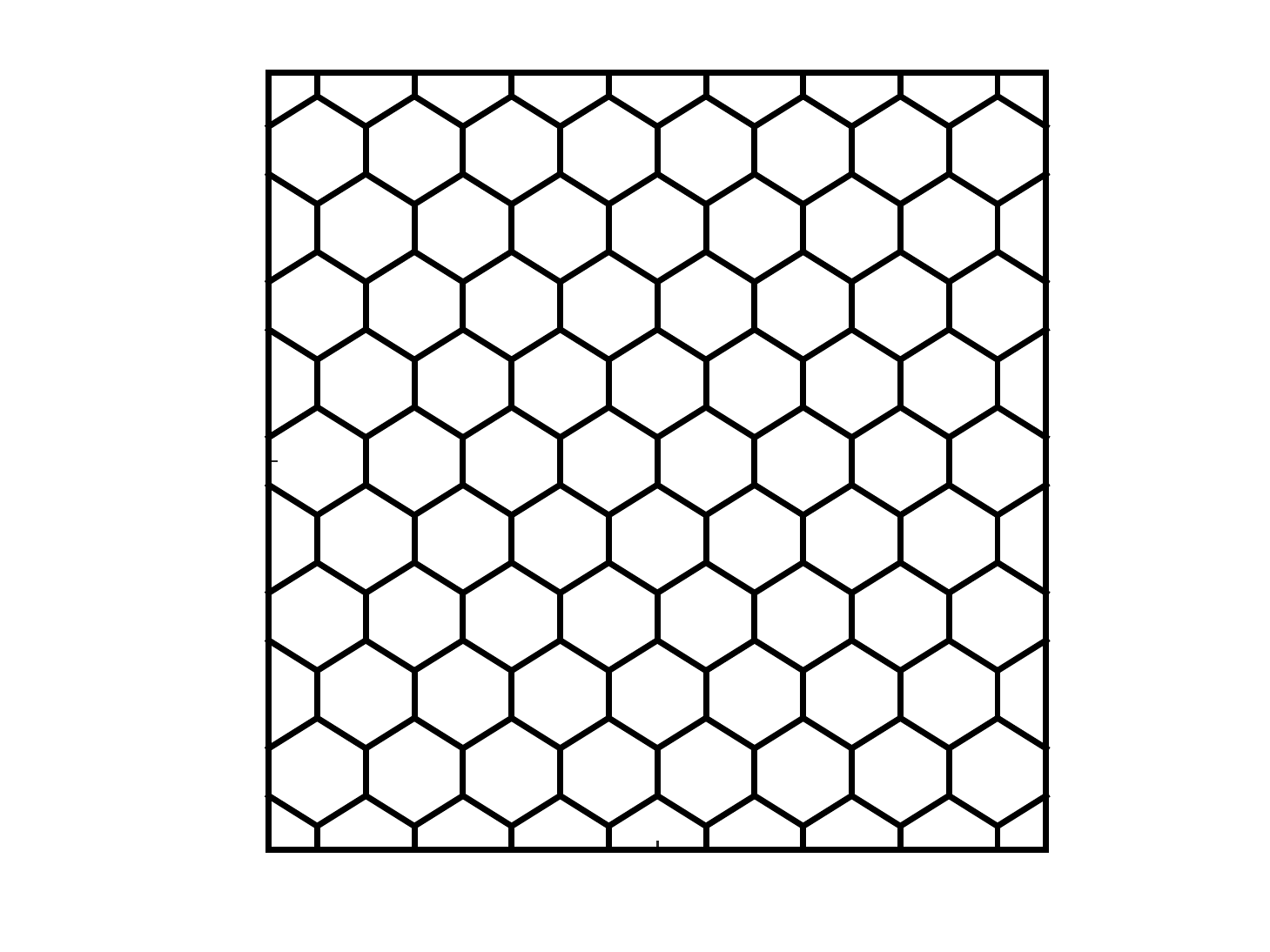}}
	\subfigure[Conc (S)]{\includegraphics[width=0.24\textwidth,trim = 20mm 3mm 25mm 3mm, clip]{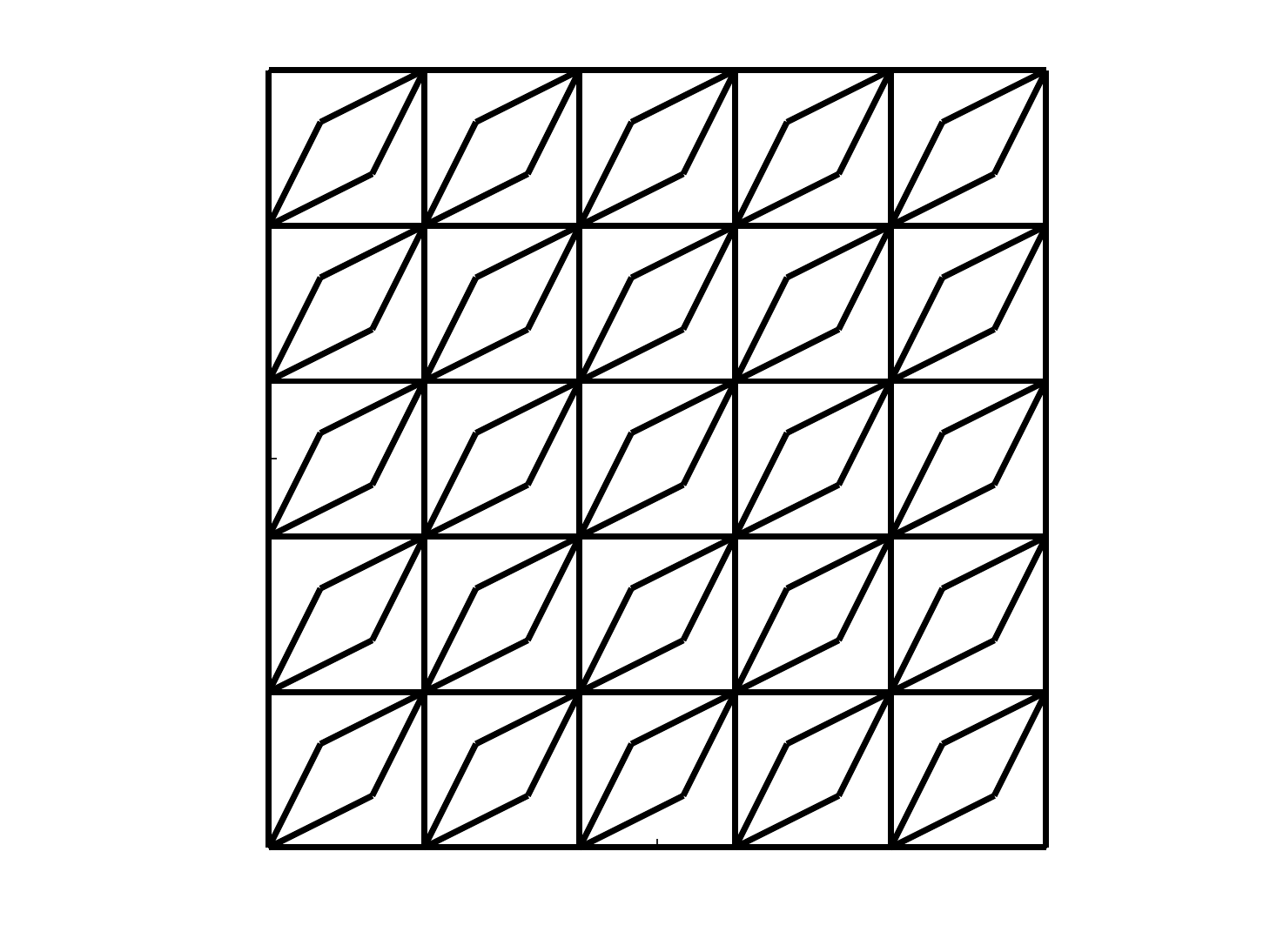}}
	\subfigure[Tri (U)]{\includegraphics[width=0.24\textwidth,trim = 20mm 3mm 25mm 3mm, clip]{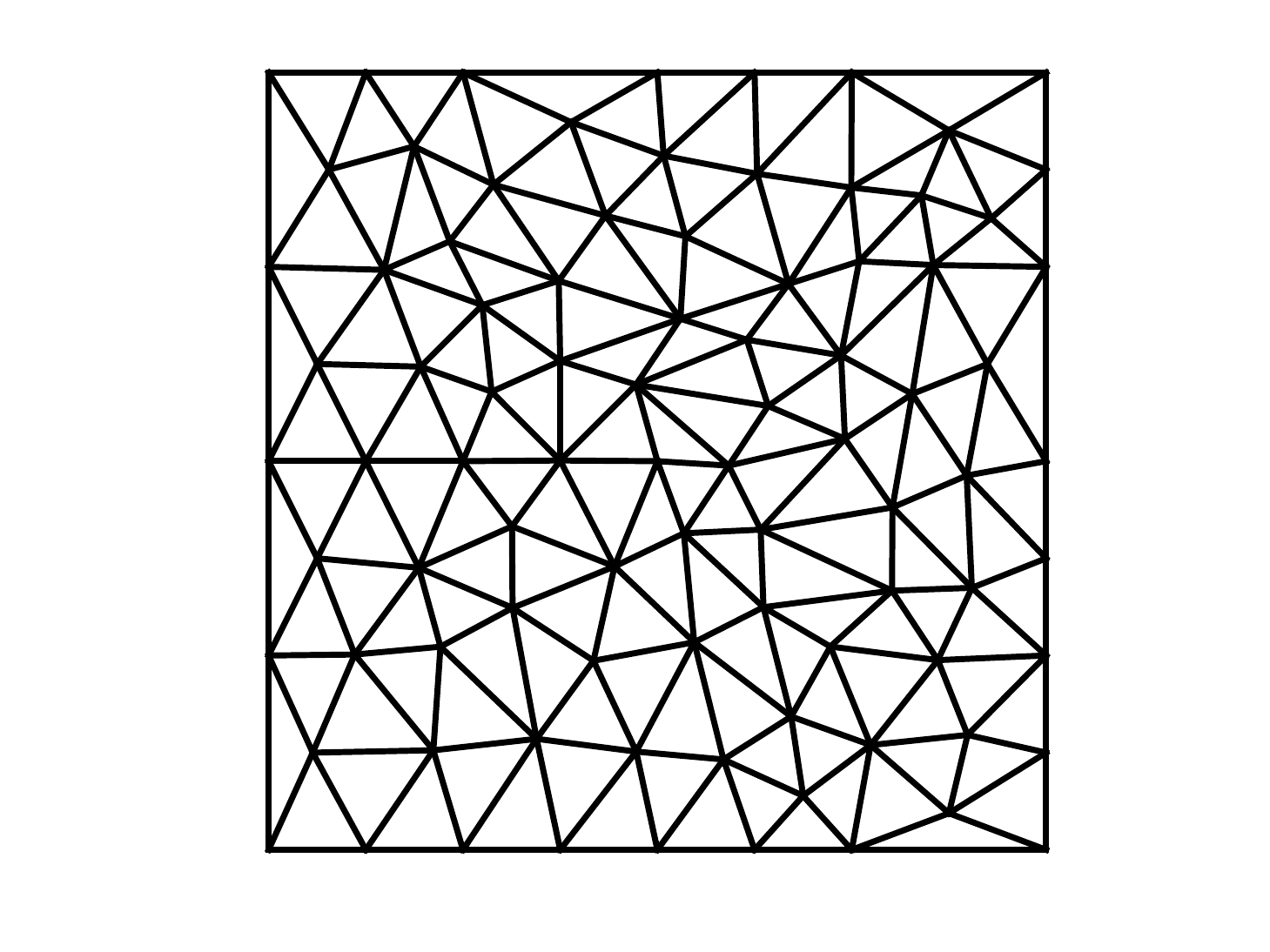}}
	\subfigure[Quad (U)]{\includegraphics[width=0.24\textwidth,trim = 20mm 3mm 25mm 3mm, clip]{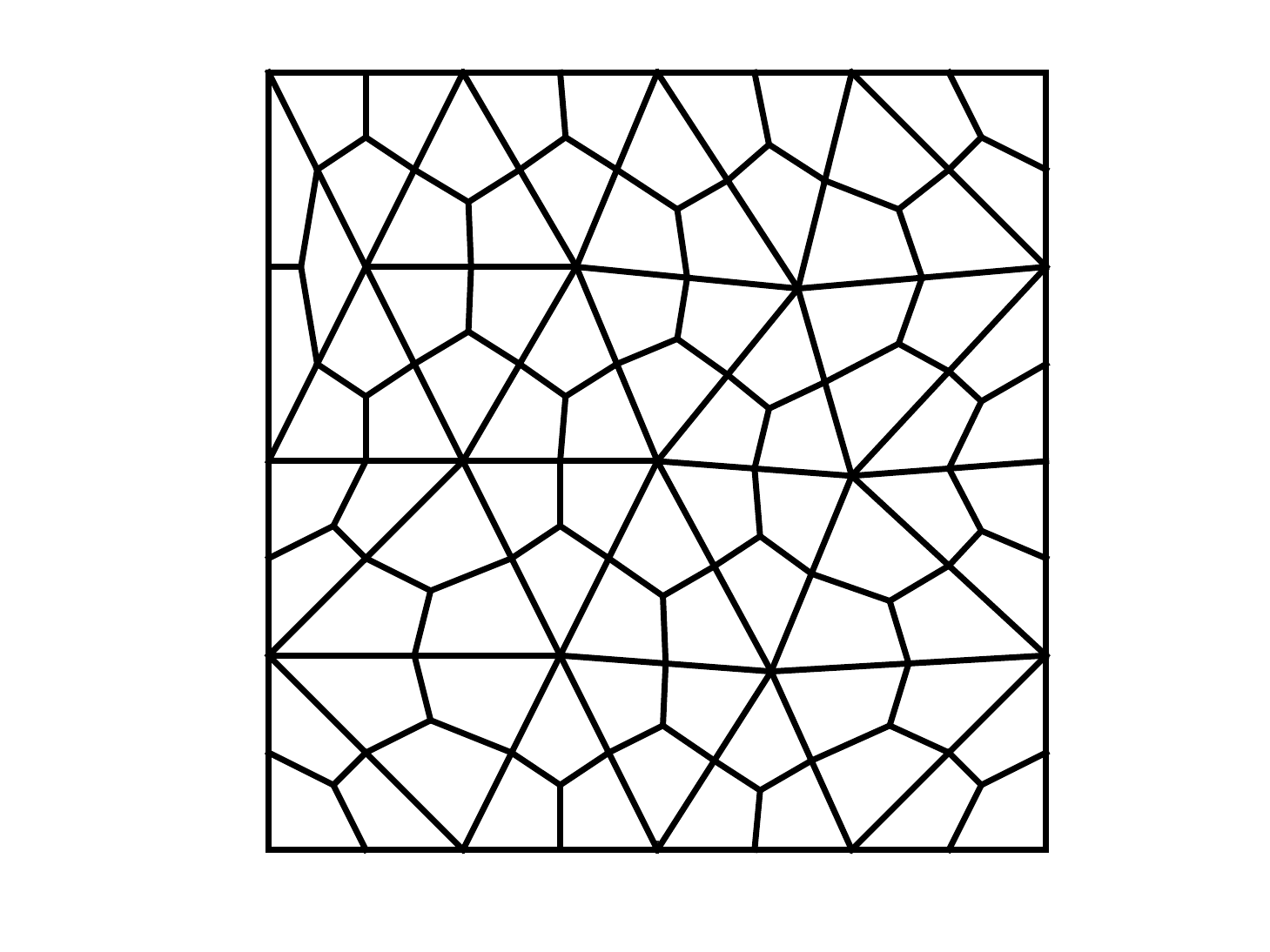}}
	\subfigure[Poly (U)]{\includegraphics[width=0.24\textwidth,trim = 20mm 3mm 25mm 3mm, clip]{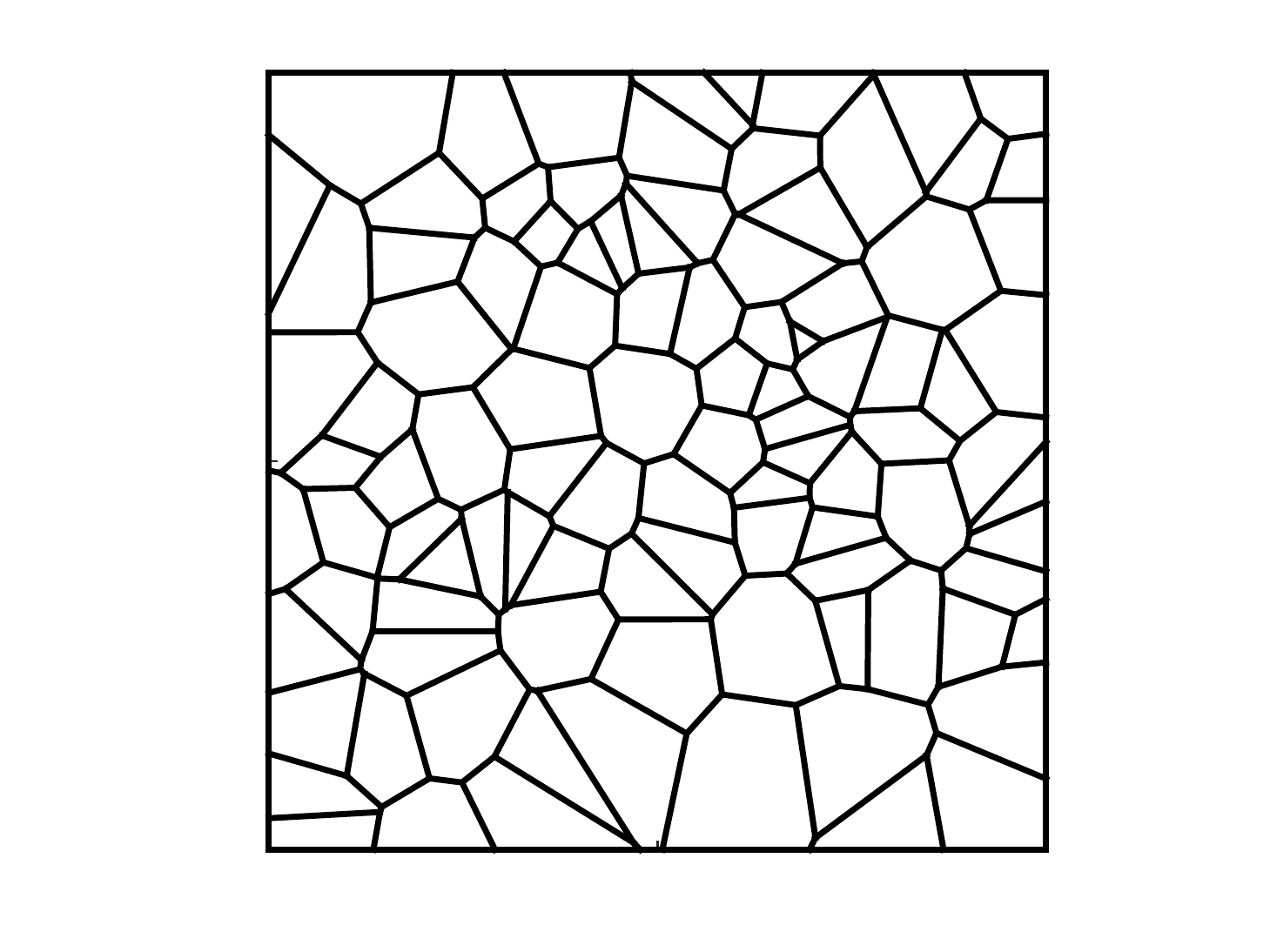}}
	\subfigure[Conc (U)]{\includegraphics[width=0.24\textwidth,trim = 20mm 3mm 25mm 3mm, clip]{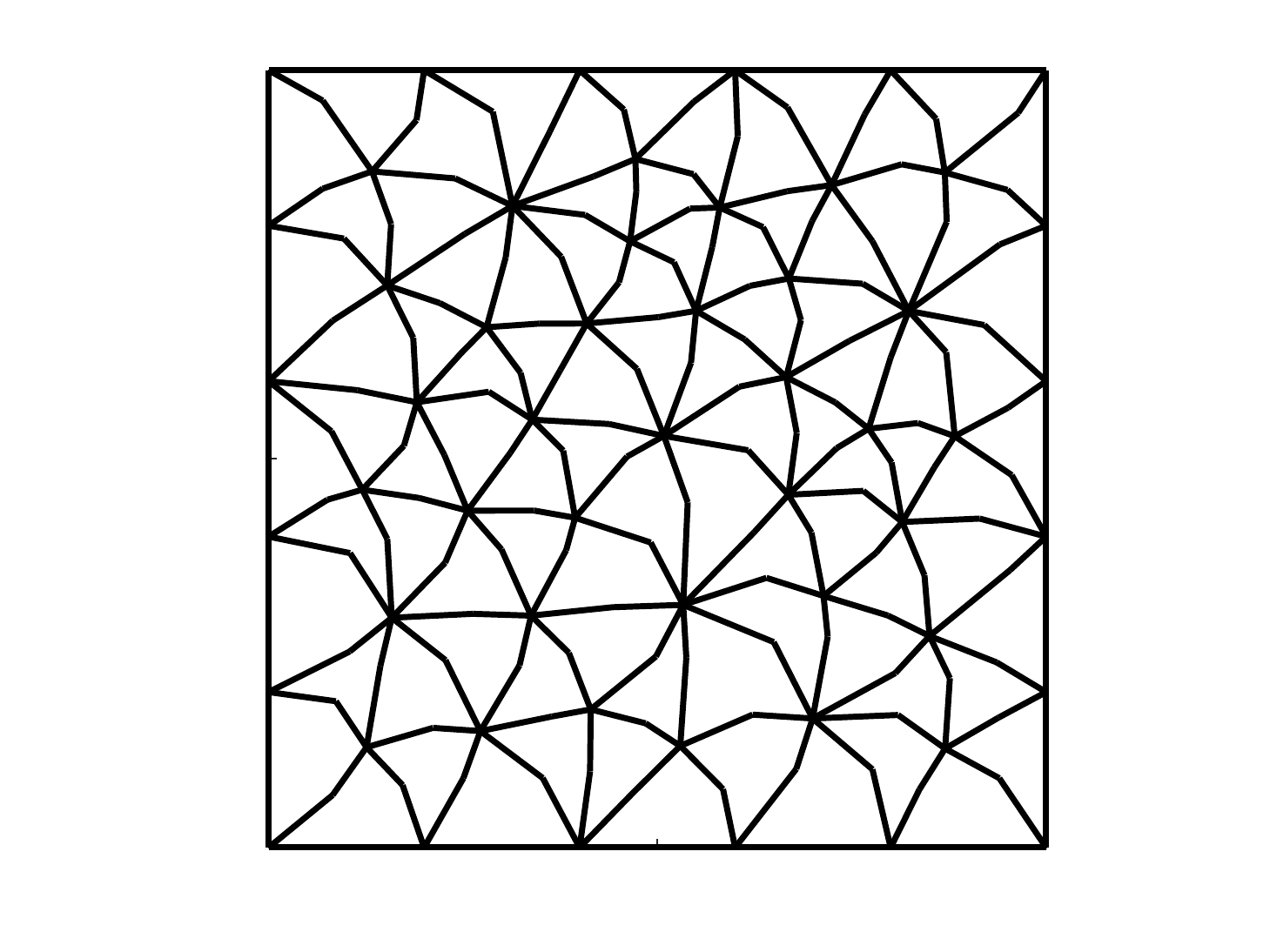}}
	\caption{Overview of the adopted meshes in the numerical tests for the convergence assessment.}
	\label{fig:meshes}
\end{figure}

\subsection{Results for $k=1$}
Figure \ref{fig:resuTestA} reports the $\bar{h}_e-$convergence of the proposed method for Test $a$ when $k$ is equal to 1. The asymptotic convergence rate is approximately equal to $2$ for all the considered error norms and meshes, as expected. The $E_{\bfsigma, \bdiv}$ plots are not reported for this case because such a quantity is captured up to machine precision for all the considered computational grids.

\begin{figure}[h!]
	\centering
	\subfigure[]{\includegraphics[width=0.45\textwidth,trim = 0mm 0mm 0mm 0mm, clip]{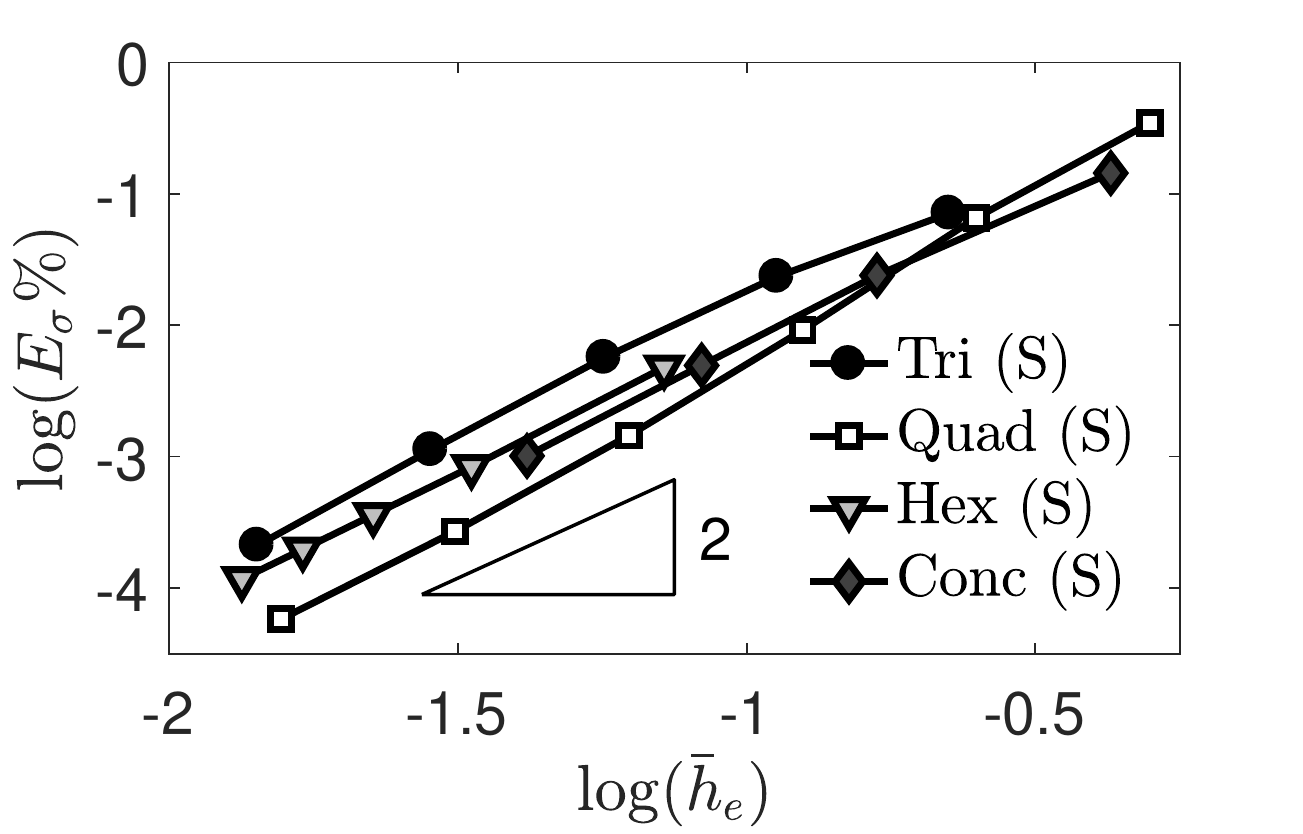}}
	\subfigure[]{\includegraphics[width=0.45\textwidth,trim = 0mm 0mm 0mm 0mm, clip]{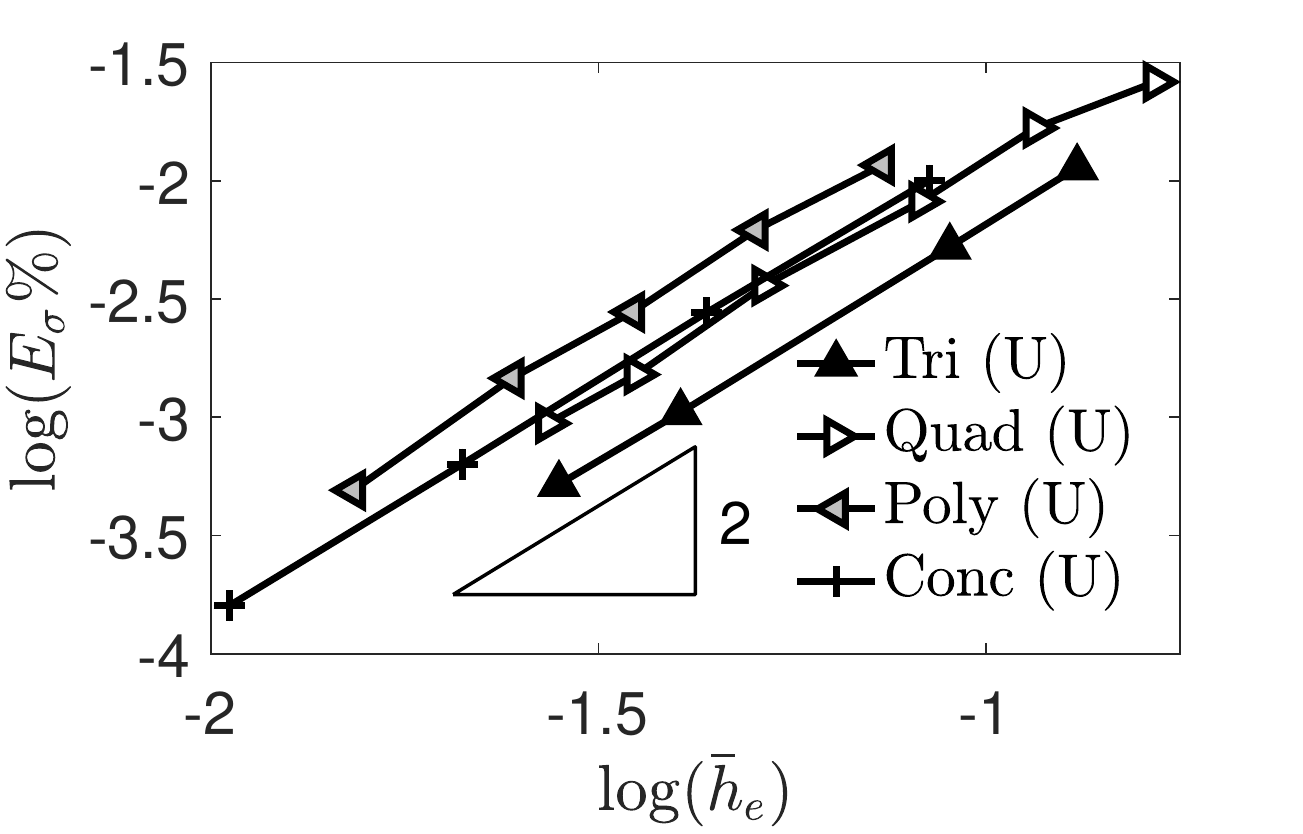}}\\
	\subfigure[]{\includegraphics[width=0.45\textwidth,trim = 0mm 0mm 0mm 0mm, clip]{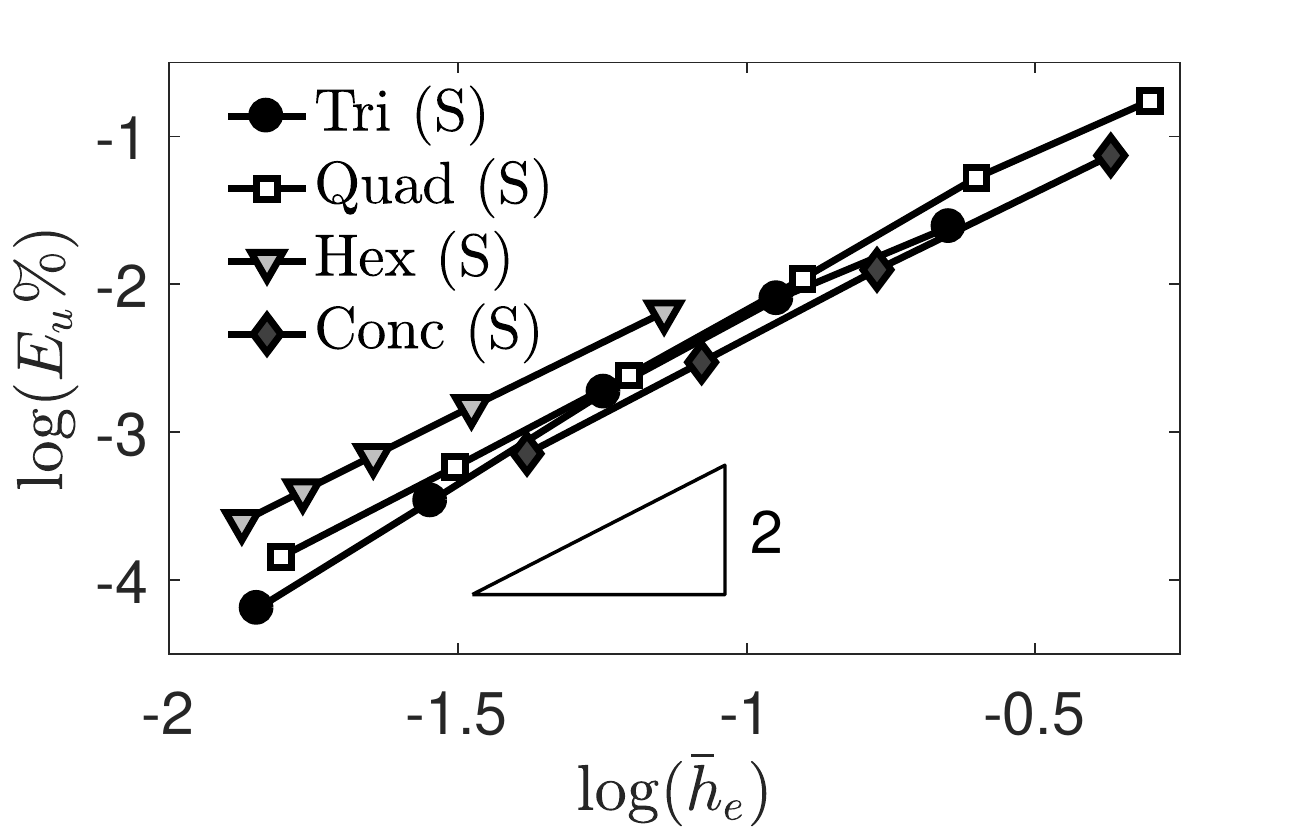}}
	\subfigure[]{\includegraphics[width=0.45\textwidth,trim = 0mm 0mm 0mm 0mm, clip]{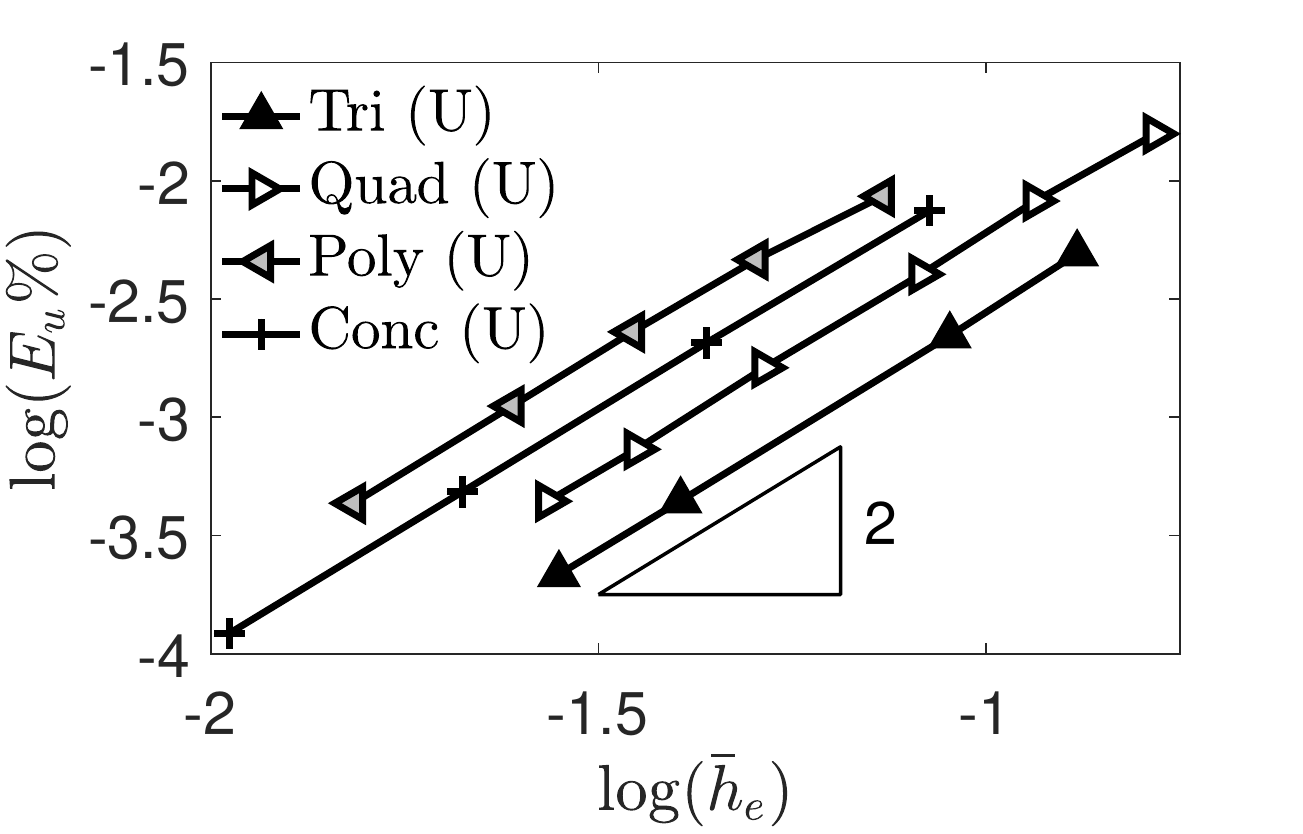}}
	\caption{$\bar{h}_e-$convergence results for Test $a$ on structured and unstructured meshes for $k=1$: (a) and (b) $E_{\bfsigma}$ error norm plots, (c) and (d) $E_{\bbu}$ error norm plots.}
	\label{fig:resuTestA}
\end{figure}

Figures \ref{fig:resuTestB} and \ref{fig:resuTestAvar} report $\bar{h}_e-$convergence for Test $b$ and Test $c$. Asymptotic converge rate is approximately equal to $2$ for all investigated mesh types and error measures. These results highlight the expected optimal performance of the proposed VEM approach and its robustness with respect to the adopted computational grid.

\begin{figure}[h!]
	\centering
	\subfigure[]{\includegraphics[width=0.45\textwidth,trim = 0mm 0mm 0mm 0mm, clip]{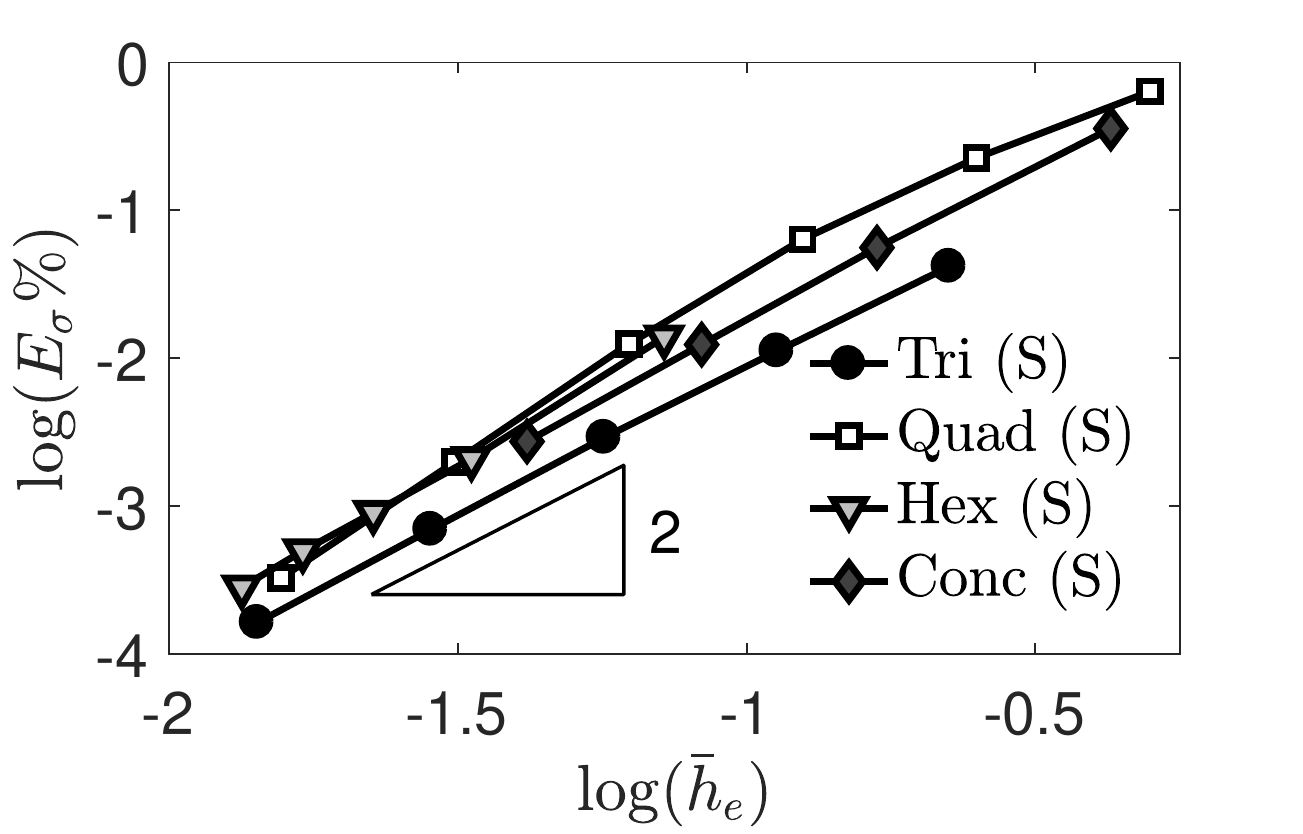}}
	\subfigure[]{\includegraphics[width=0.45\textwidth,trim = 0mm 0mm 0mm 0mm, clip]{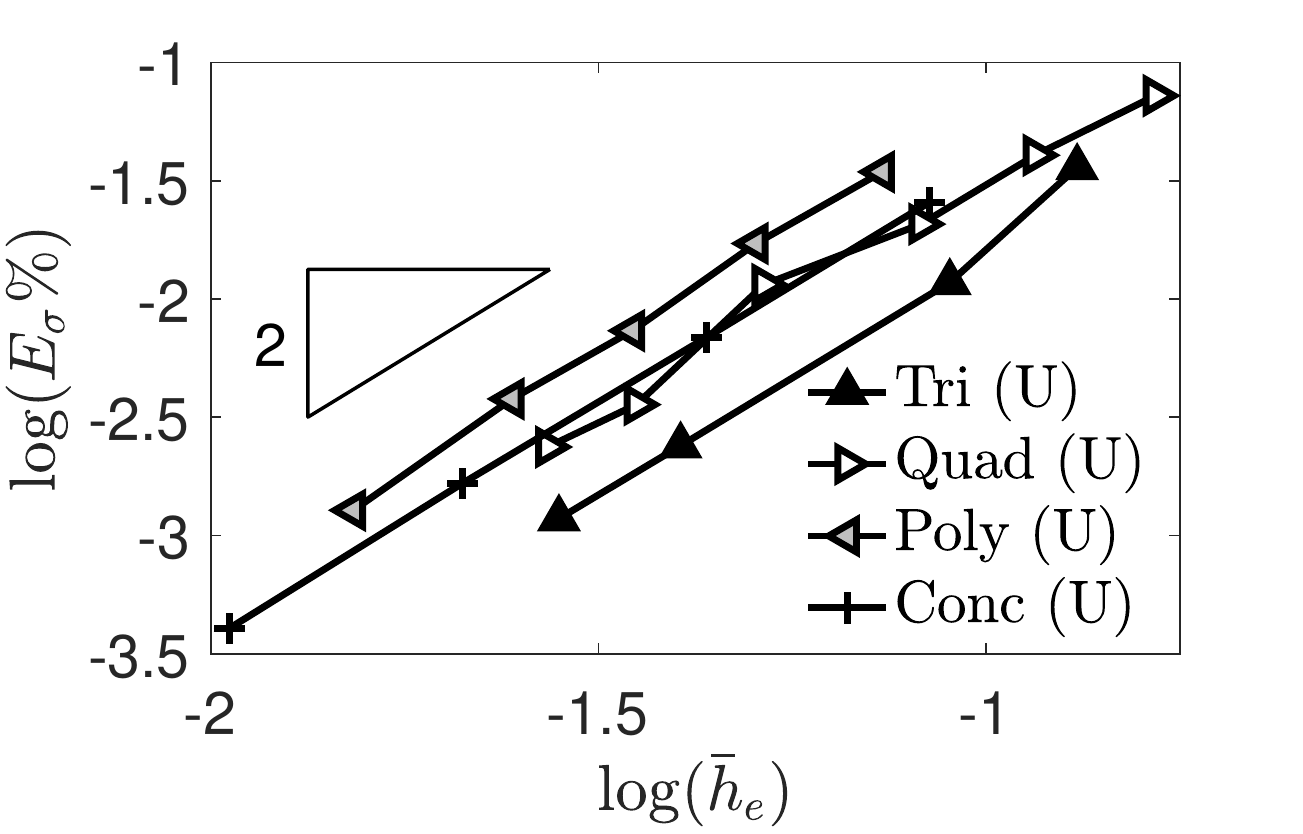}}\\
	\subfigure[]{\includegraphics[width=0.45\textwidth,trim = 0mm 0mm 0mm 0mm, clip]{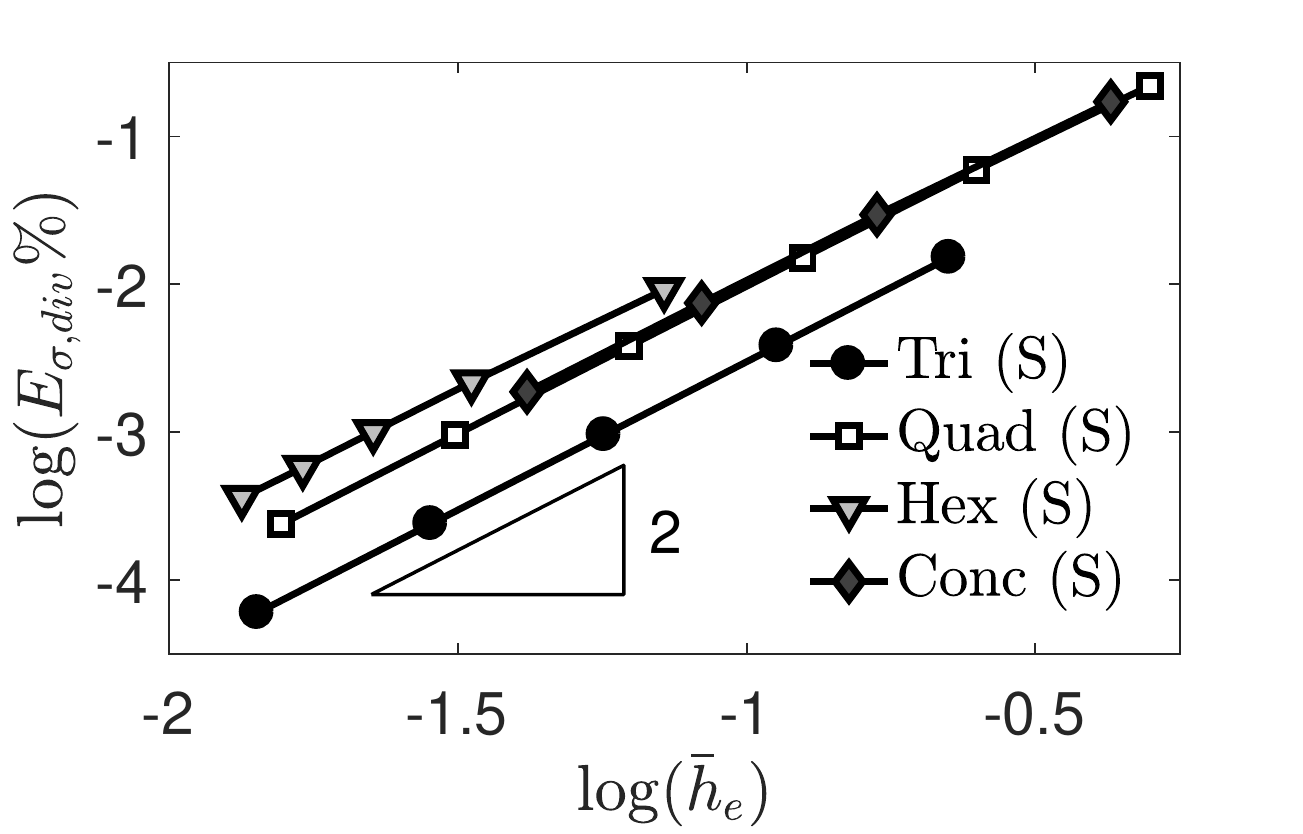}}
	\subfigure[]{\includegraphics[width=0.45\textwidth,trim = 0mm 0mm 0mm 0mm, clip]{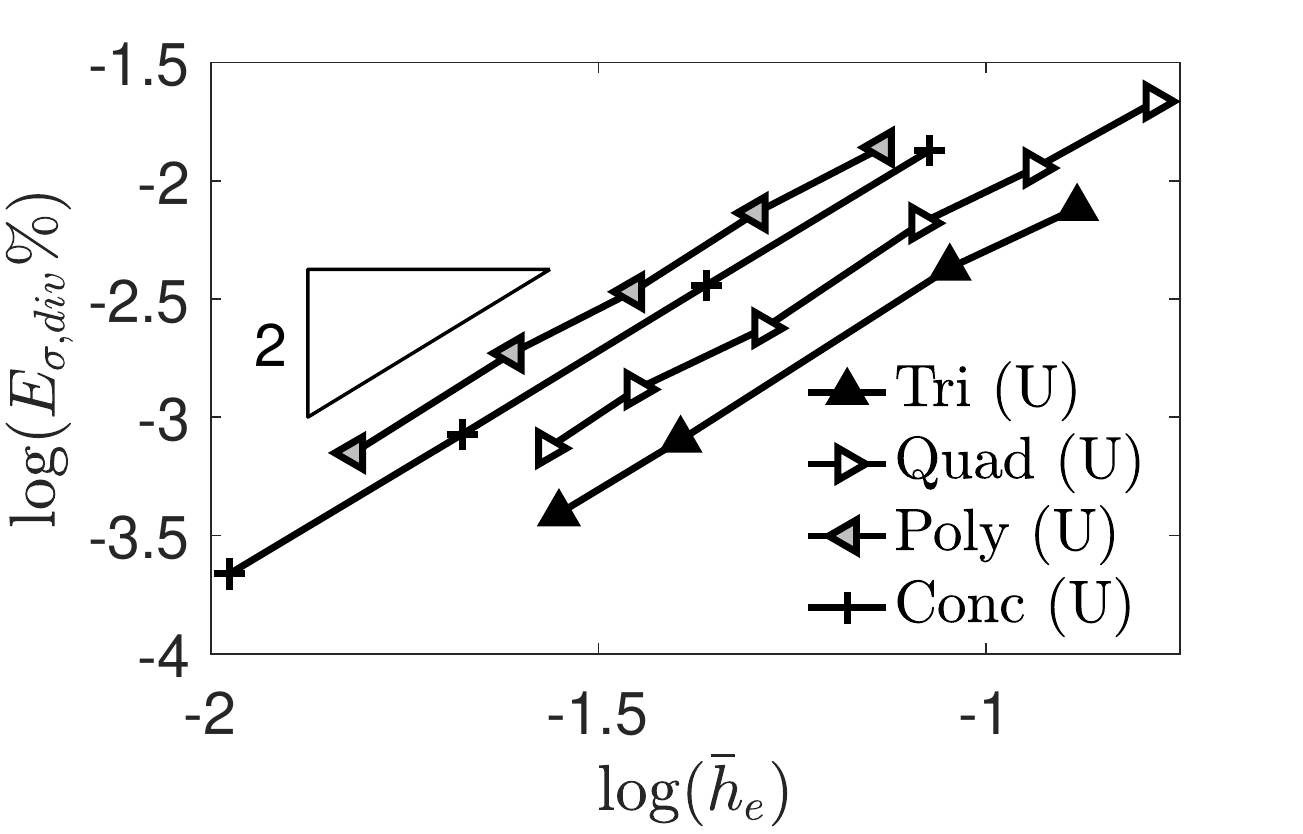}}\\
	\subfigure[]{\includegraphics[width=0.45\textwidth,trim = 0mm 0mm 0mm 0mm, clip]{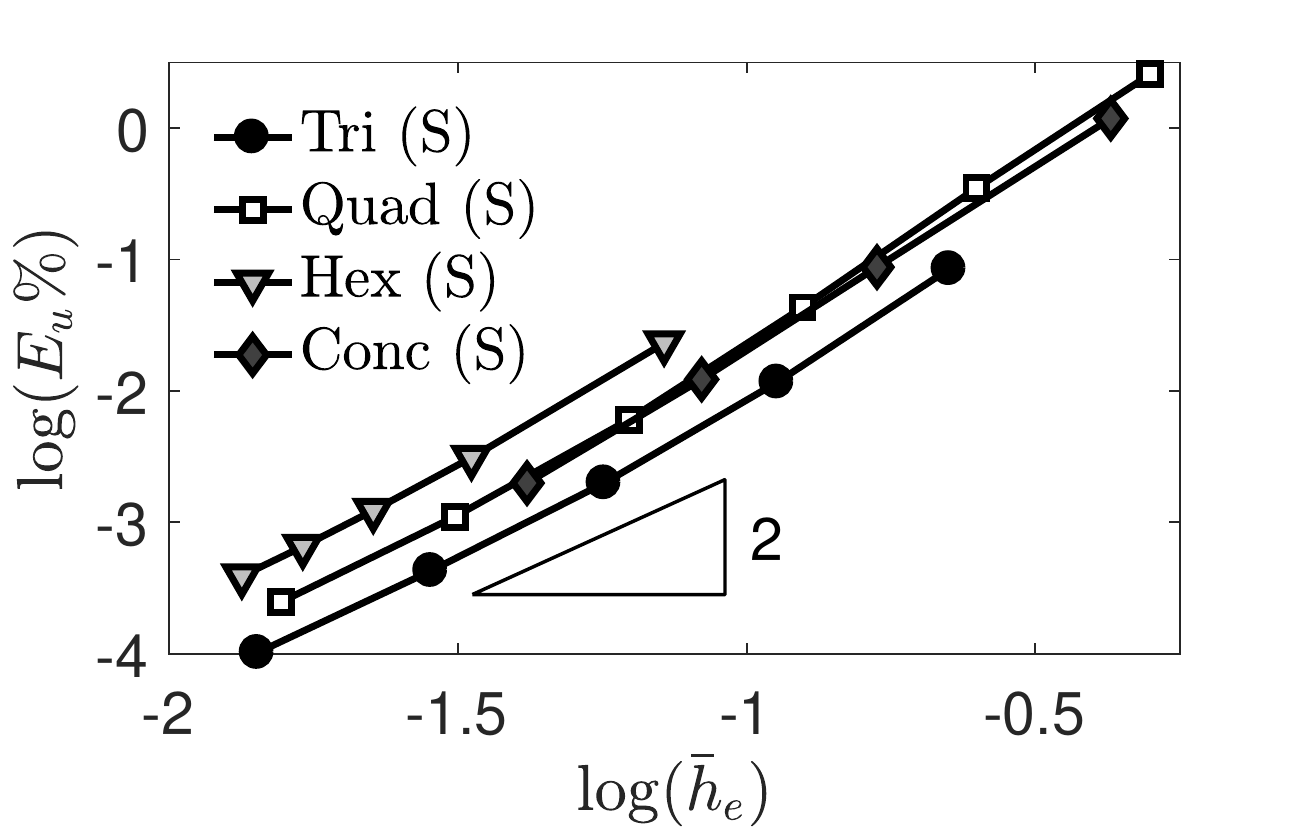}}
	\subfigure[]{\includegraphics[width=0.45\textwidth,trim = 0mm 0mm 0mm 0mm, clip]{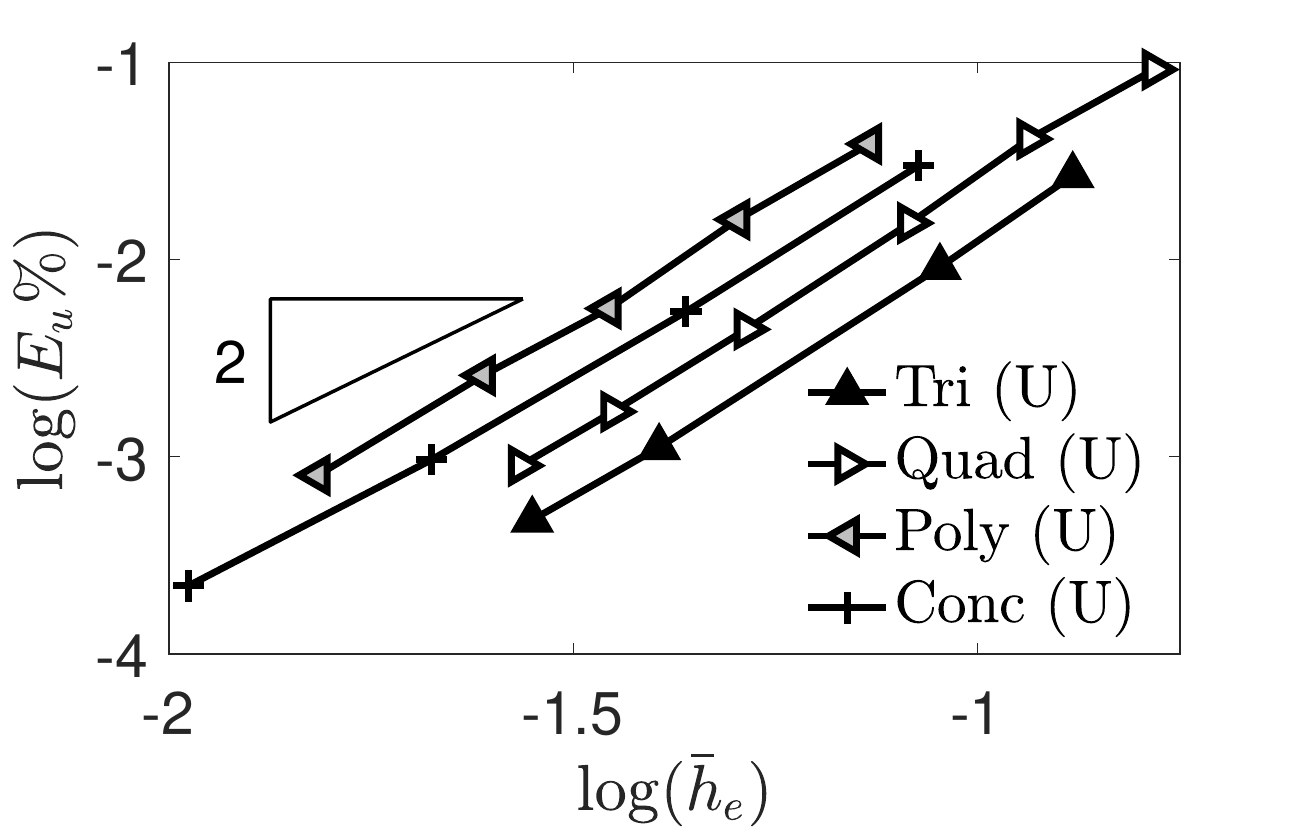}}\\
	\caption{$\bar{h}_e-$convergence results for Test $b$ on structured and unstructured meshes for $k=1$: (a) and (b) $E_{\bfsigma}$ error norm plots, (c) and (d) $E_{\bfsigma, \bdiv}$ error norm plots, (e) and (f) $E_{\bbu}$ error norm plots.}
	\label{fig:resuTestB}
\end{figure}

\begin{figure}[h!]
	\centering
	\subfigure[]{\includegraphics[width=0.45\textwidth,trim = 0mm 0mm 0mm 0mm, clip]{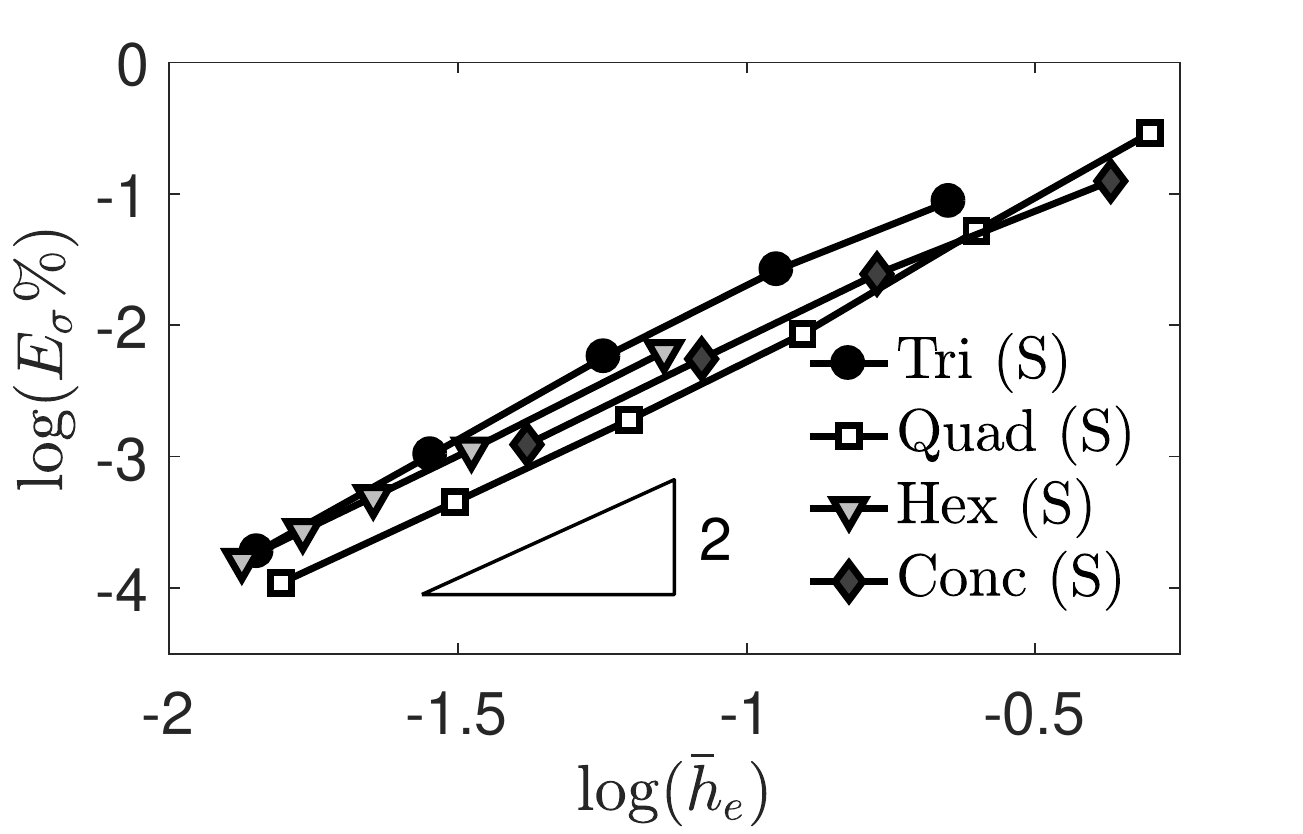}}
	\subfigure[]{\includegraphics[width=0.45\textwidth,trim = 0mm 0mm 0mm 0mm, clip]{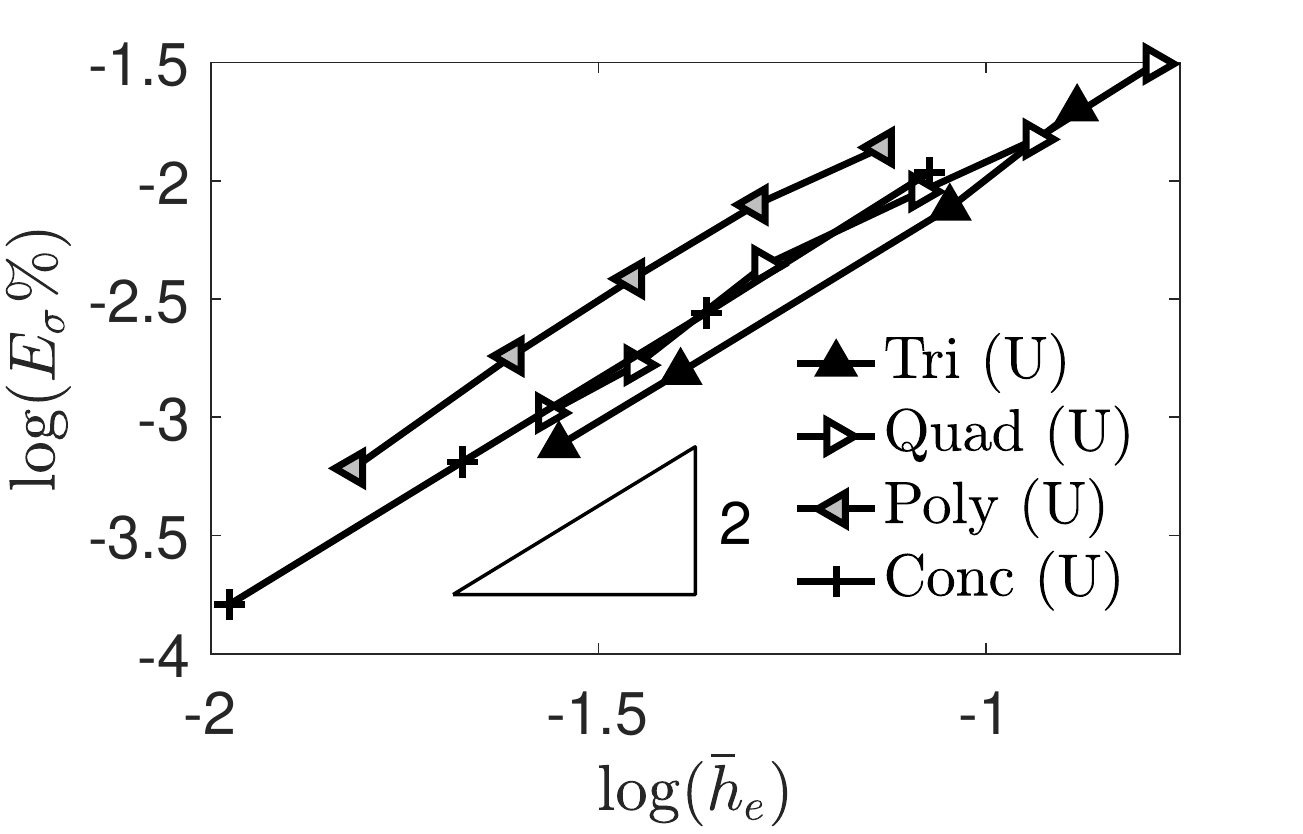}}\\
	\subfigure[]{\includegraphics[width=0.45\textwidth,trim = 0mm 0mm 0mm 0mm, clip]{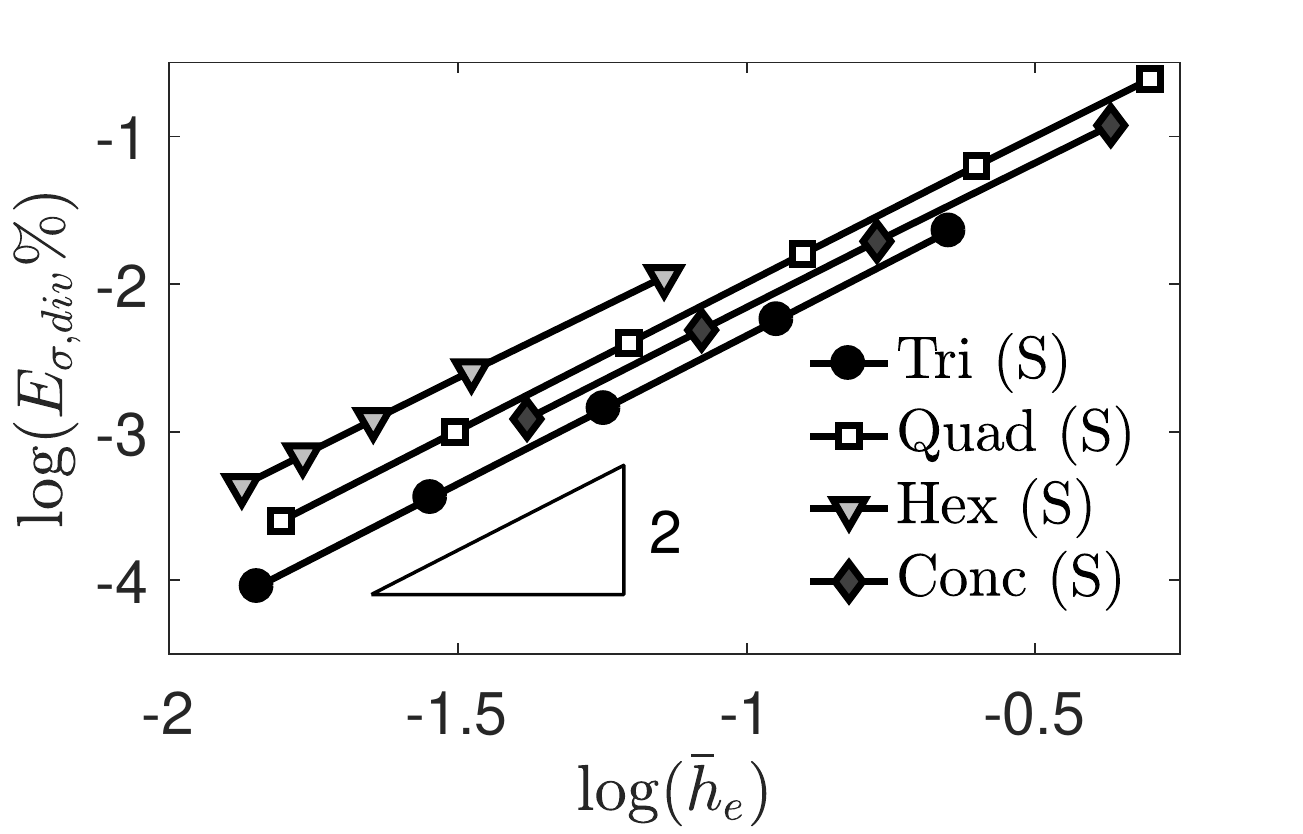}}
	\subfigure[]{\includegraphics[width=0.45\textwidth,trim = 0mm 0mm 0mm 0mm, clip]{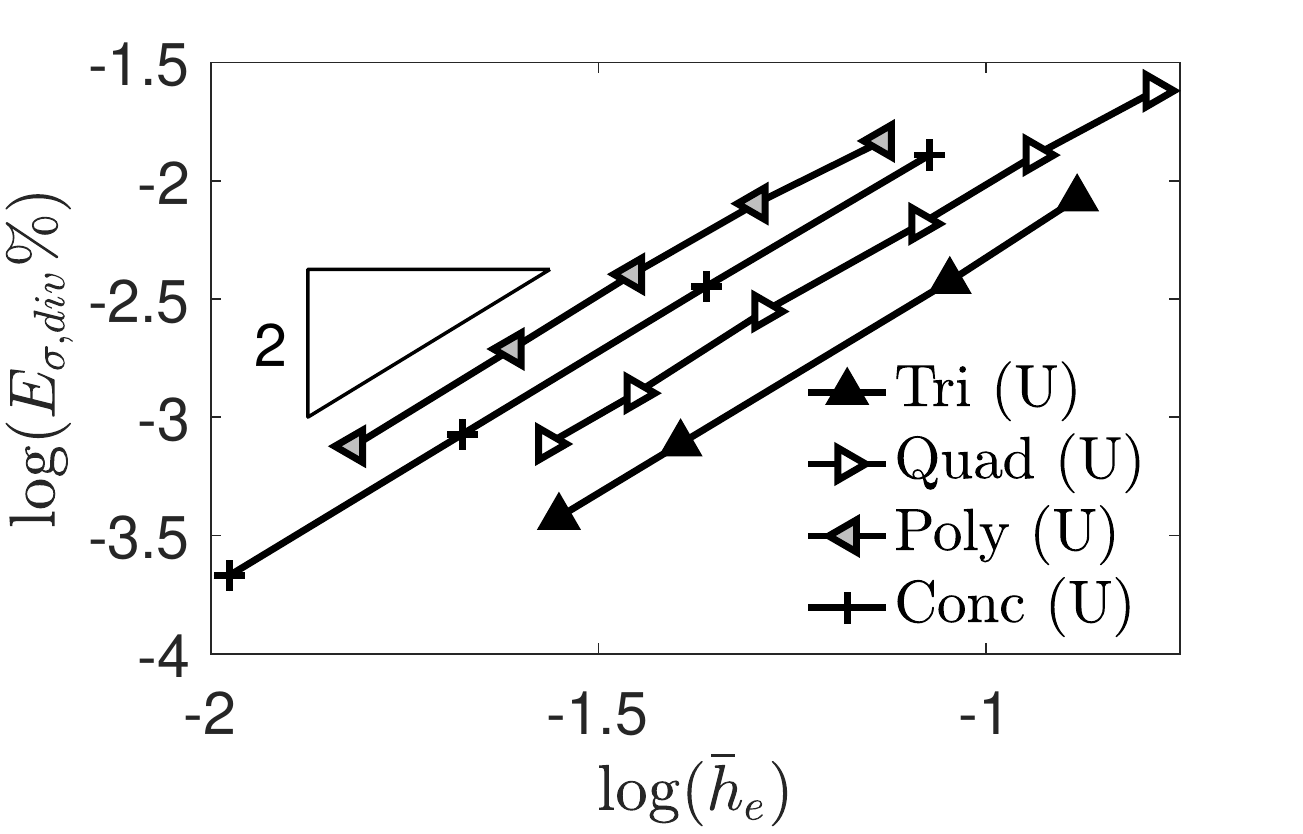}}\\
	\subfigure[]{\includegraphics[width=0.45\textwidth,trim = 0mm 0mm 0mm 0mm, clip]{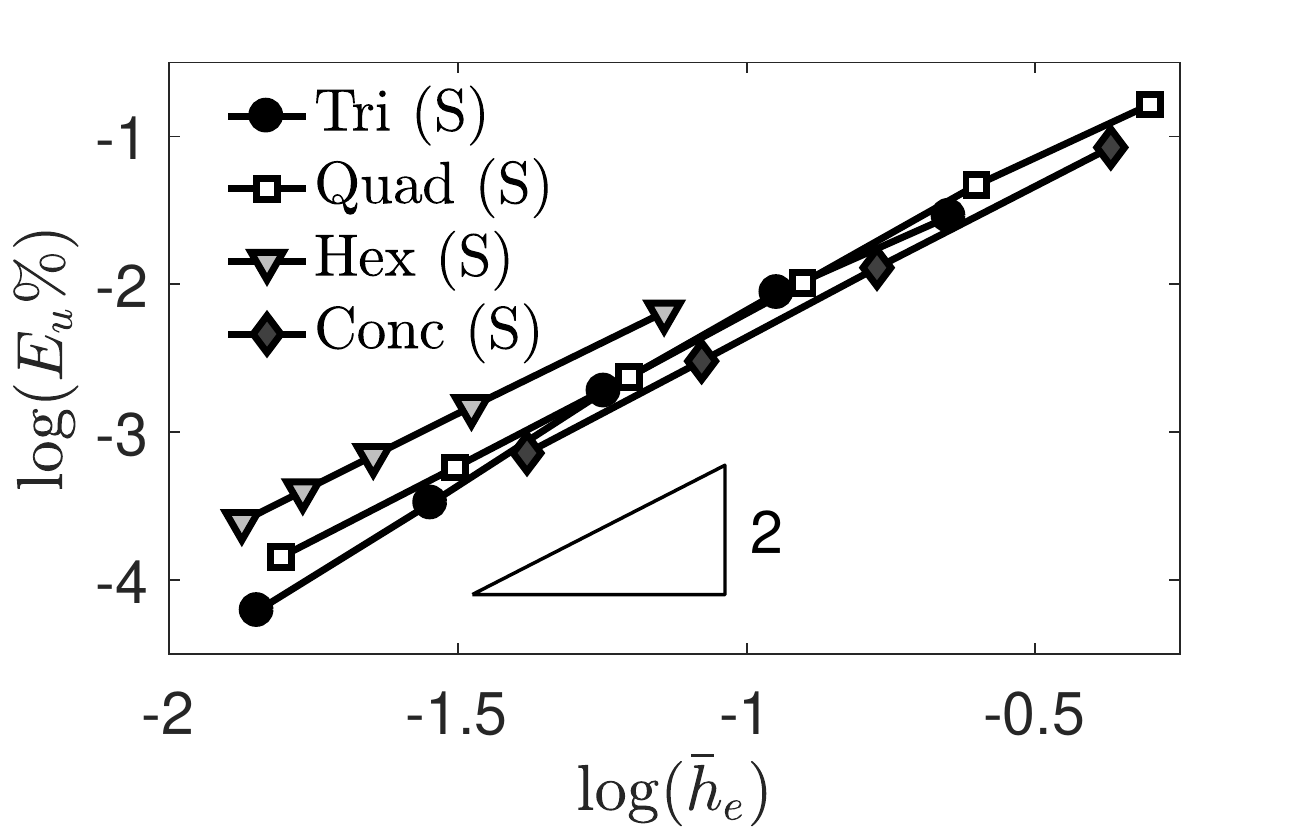}}
	\subfigure[]{\includegraphics[width=0.45\textwidth,trim = 0mm 0mm 0mm 0mm, clip]{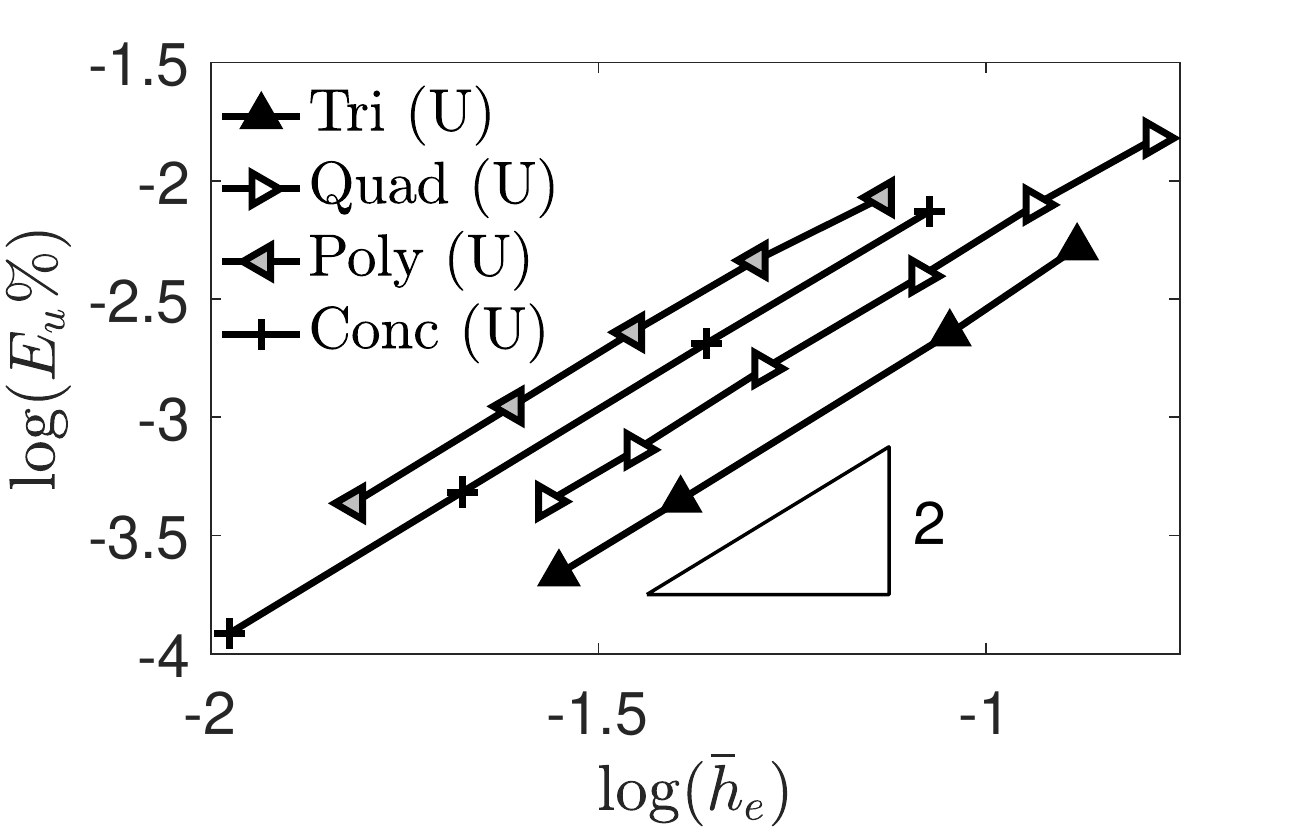}}\\
	\caption{$\bar{h}_e-$convergence results for Test $c$ on structured and unstructured meshes for $k=1$: (a) and (b) $E_{\bfsigma}$ error norm plots, (c) and (d) $E_{\bfsigma, \bdiv}$ error norm plots, (e) and (f) $E_{\bbu}$ error norm plots.}
	\label{fig:resuTestAvar}
\end{figure}

\subsection{Results for $k=2$}
Figure \ref{fig:resuTestA2} reports the $\bar{h}_e-$convergence of the proposed method for Test $a$ when $k$ is equal to 2. The asymptotic convergence rate is approximately equal to $3$ for all the considered error norms and meshes, as expected. In this case the $E_{\bfsigma}$ and $E_{\bfsigma, \bdiv}$ plots are not reported because such quantities are captured up to machine precision for all the considered computational grids.

\begin{figure}[h!]
	\centering
	\subfigure[]{\includegraphics[width=0.45\textwidth,trim = 0mm 0mm 0mm 0mm, clip]{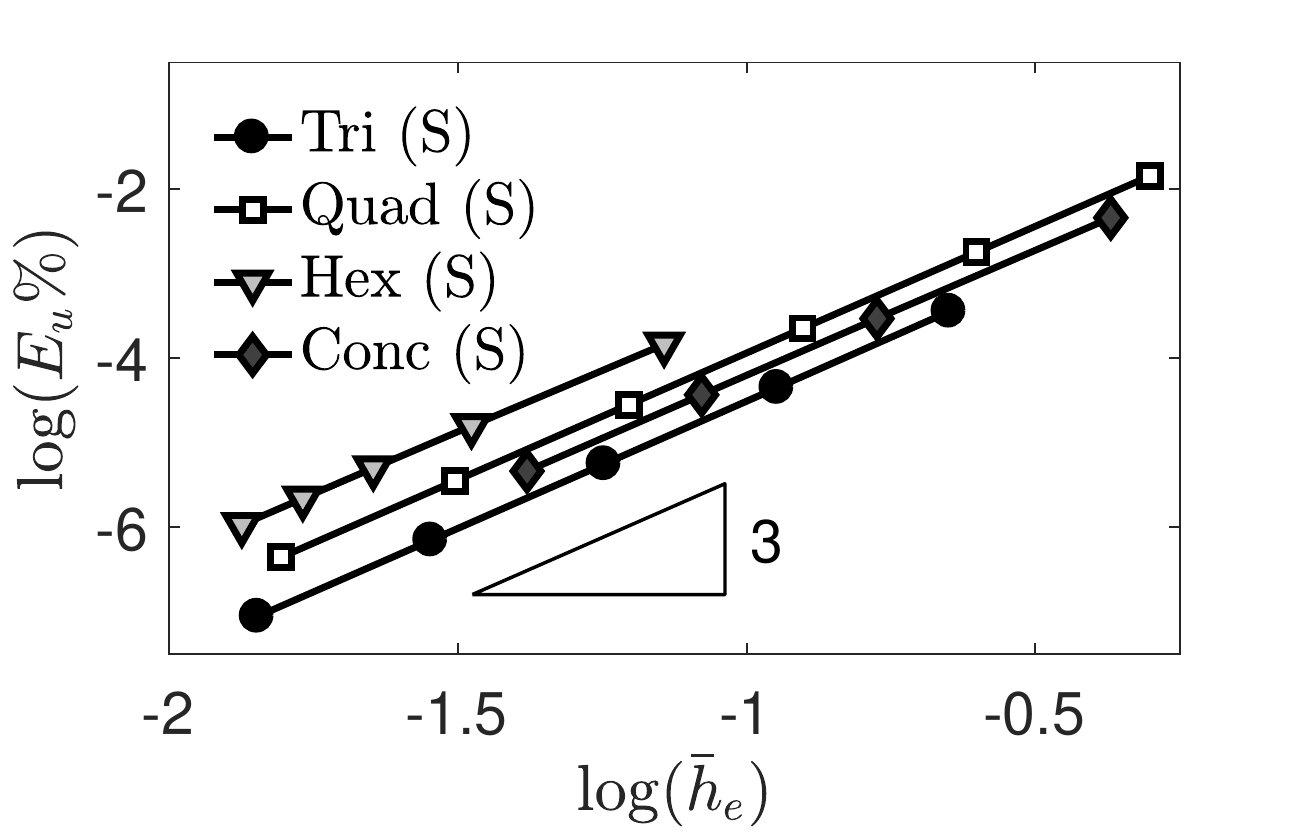}}
	\subfigure[]{\includegraphics[width=0.45\textwidth,trim = 0mm 0mm 0mm 0mm, clip]{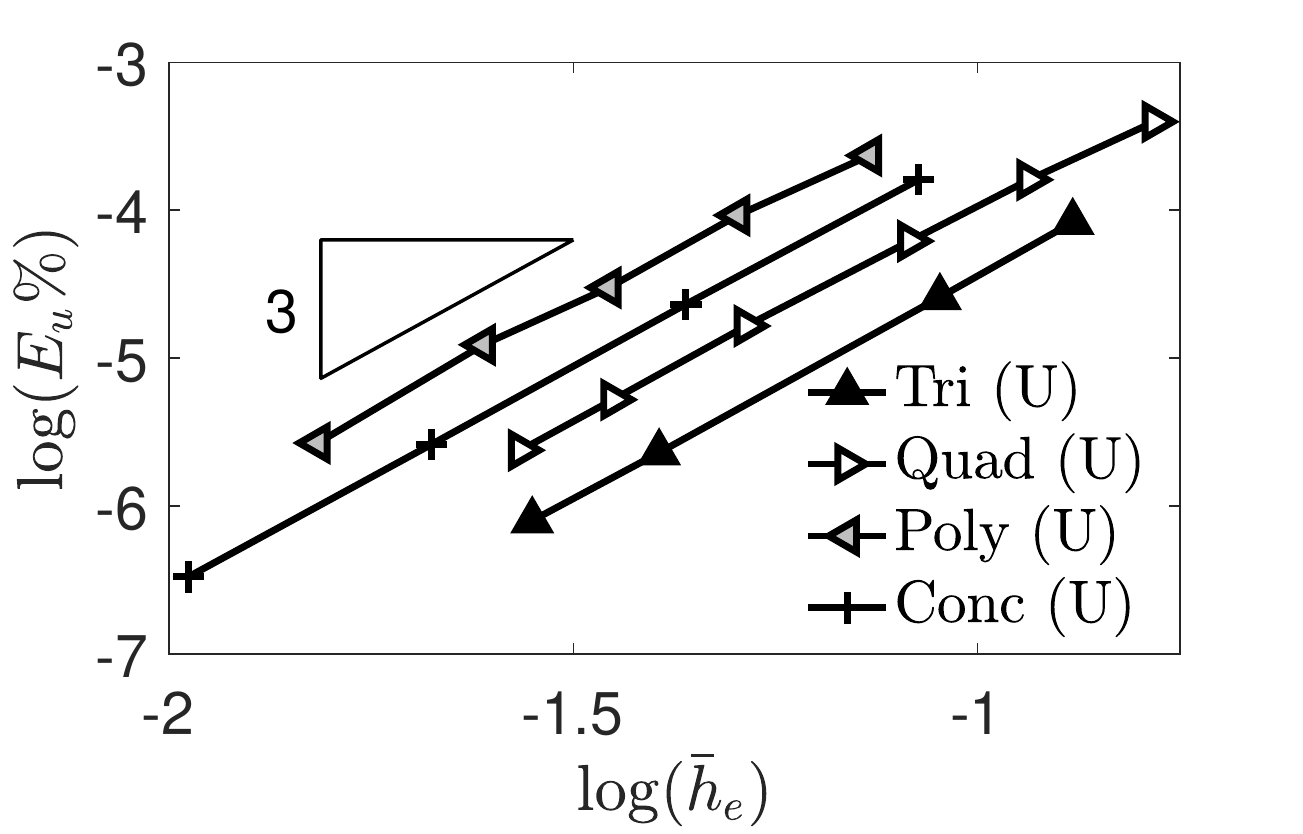}}
	\caption{$\bar{h}_e-$convergence results for Test $a$ on structured and unstructured meshes for $k=2$: (a) and (b) $E_{\bbu}$ error norm plots.}
	\label{fig:resuTestA2}
\end{figure}

Figure \ref{fig:resuTestB2} and \ref{fig:resuTestAvar2} report $\bar{h}_e-$convergence for Test $b$ and Test $c$. Also in this case results confirm the soundness of the proposed approach.

\begin{figure}[h!]
	\centering
	\subfigure[]{\includegraphics[width=0.45\textwidth,trim = 0mm 0mm 0mm 0mm, clip]{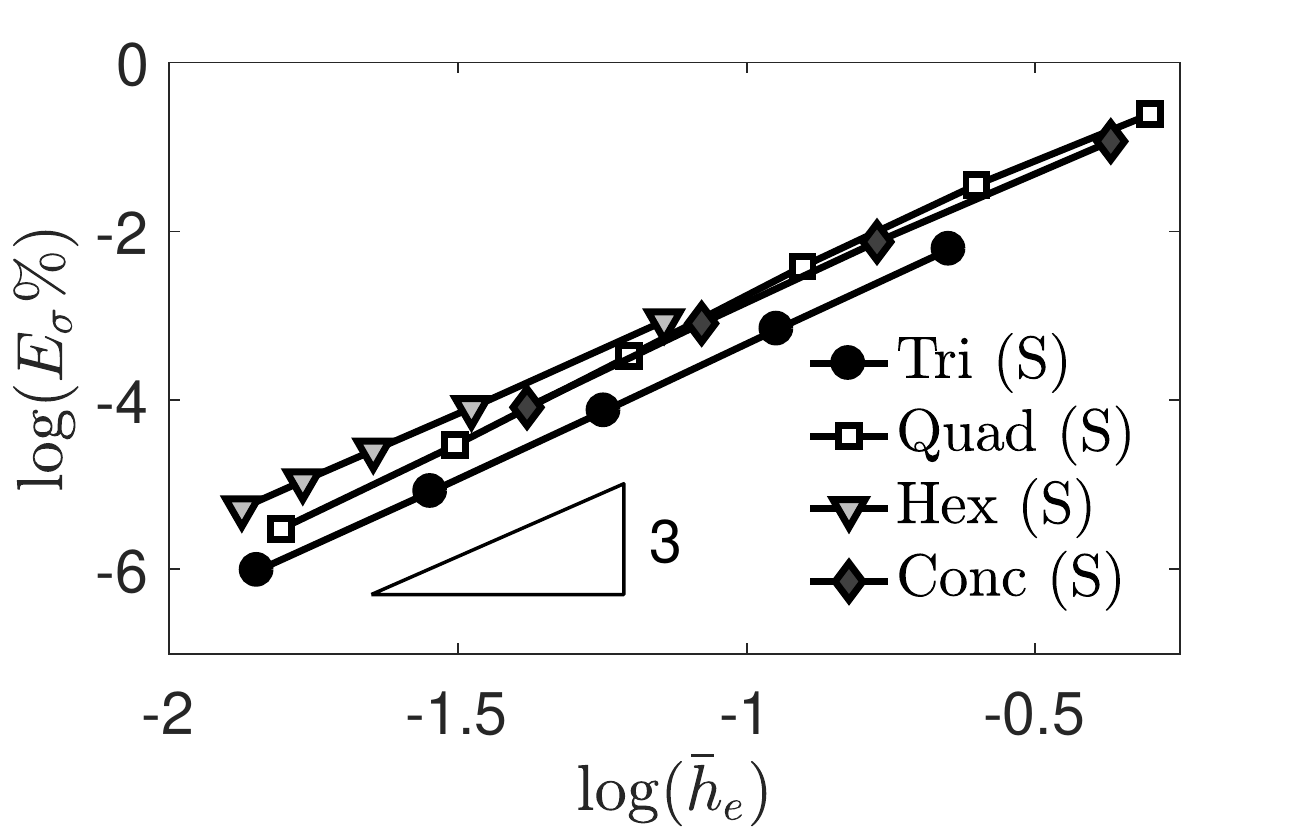}}
	\subfigure[]{\includegraphics[width=0.45\textwidth,trim = 0mm 0mm 0mm 0mm, clip]{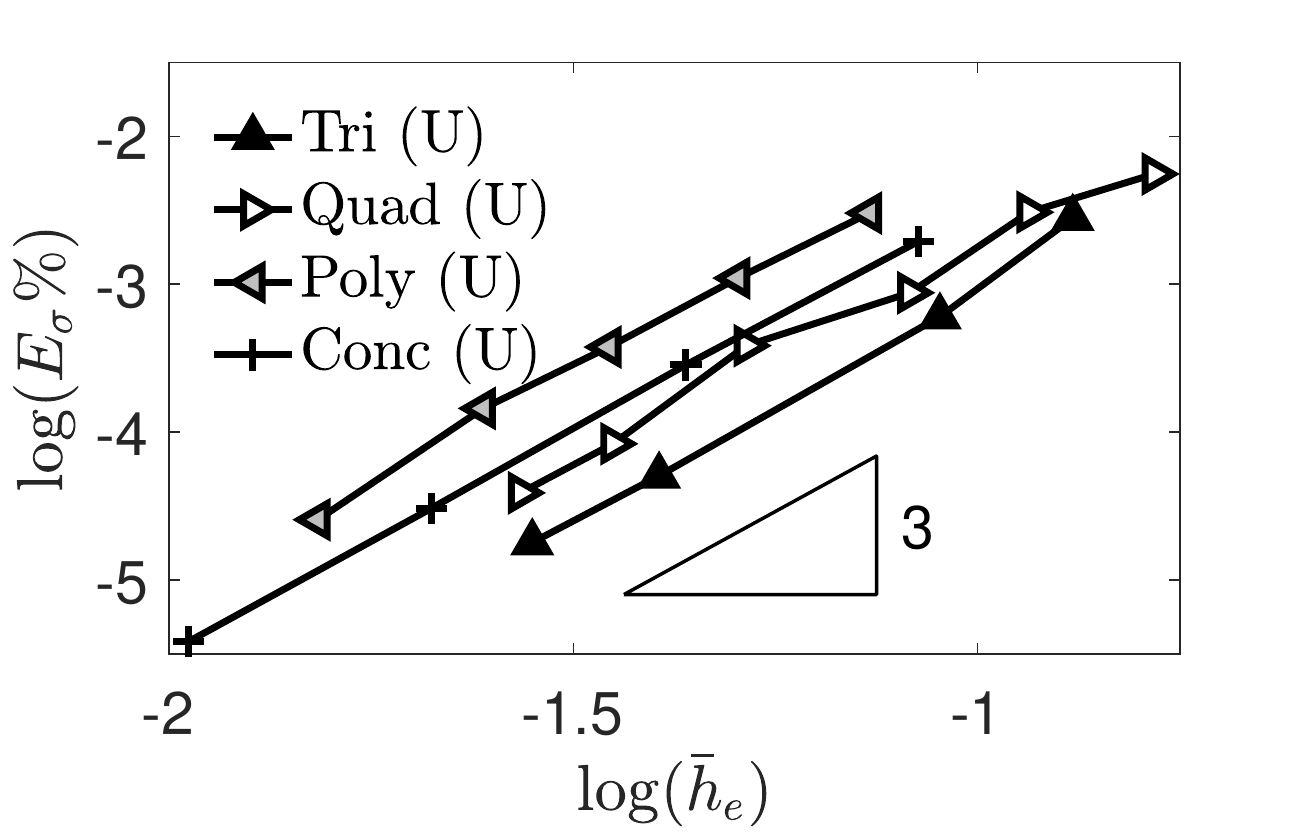}}\\
	\subfigure[]{\includegraphics[width=0.45\textwidth,trim = 0mm 0mm 0mm 0mm, clip]{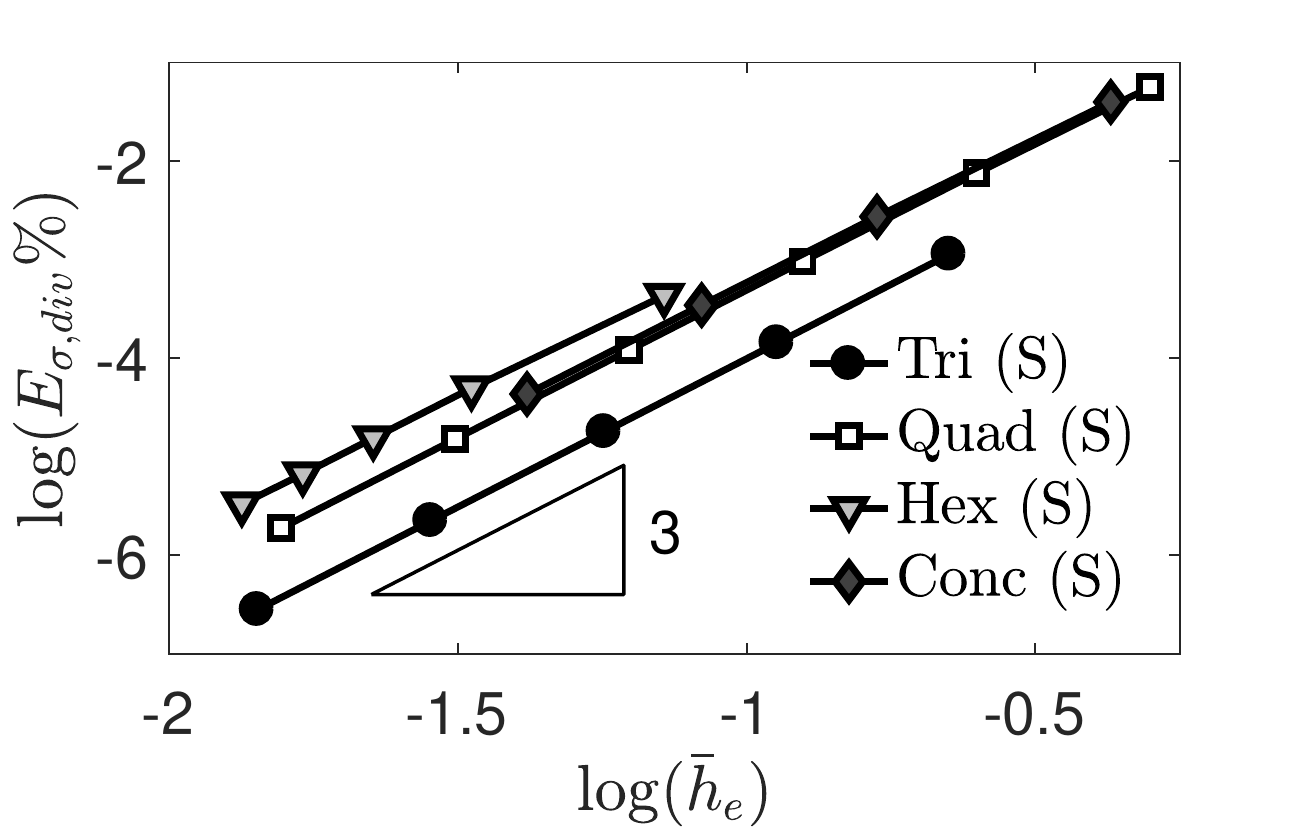}}
	\subfigure[]{\includegraphics[width=0.45\textwidth,trim = 0mm 0mm 0mm 0mm, clip]{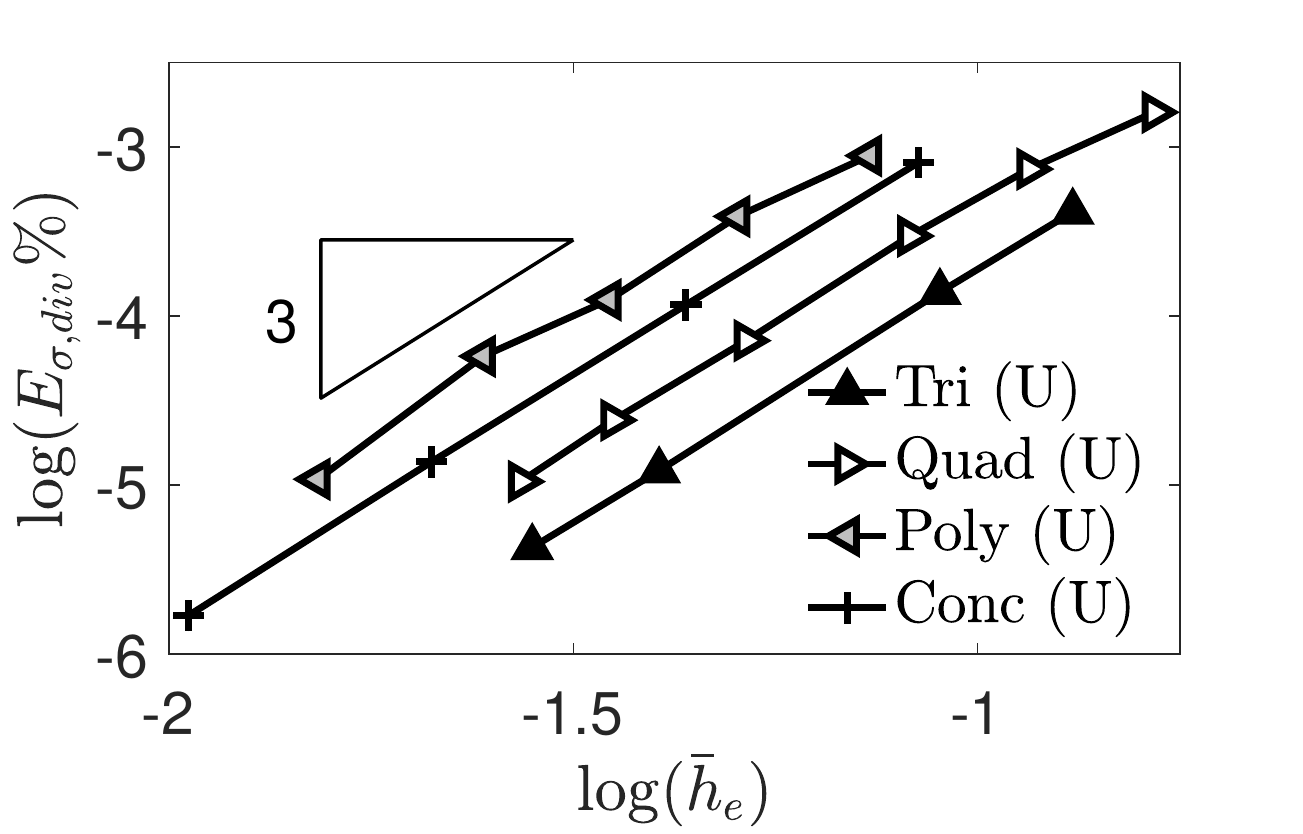}}\\
	\subfigure[]{\includegraphics[width=0.45\textwidth,trim = 0mm 0mm 0mm 0mm, clip]{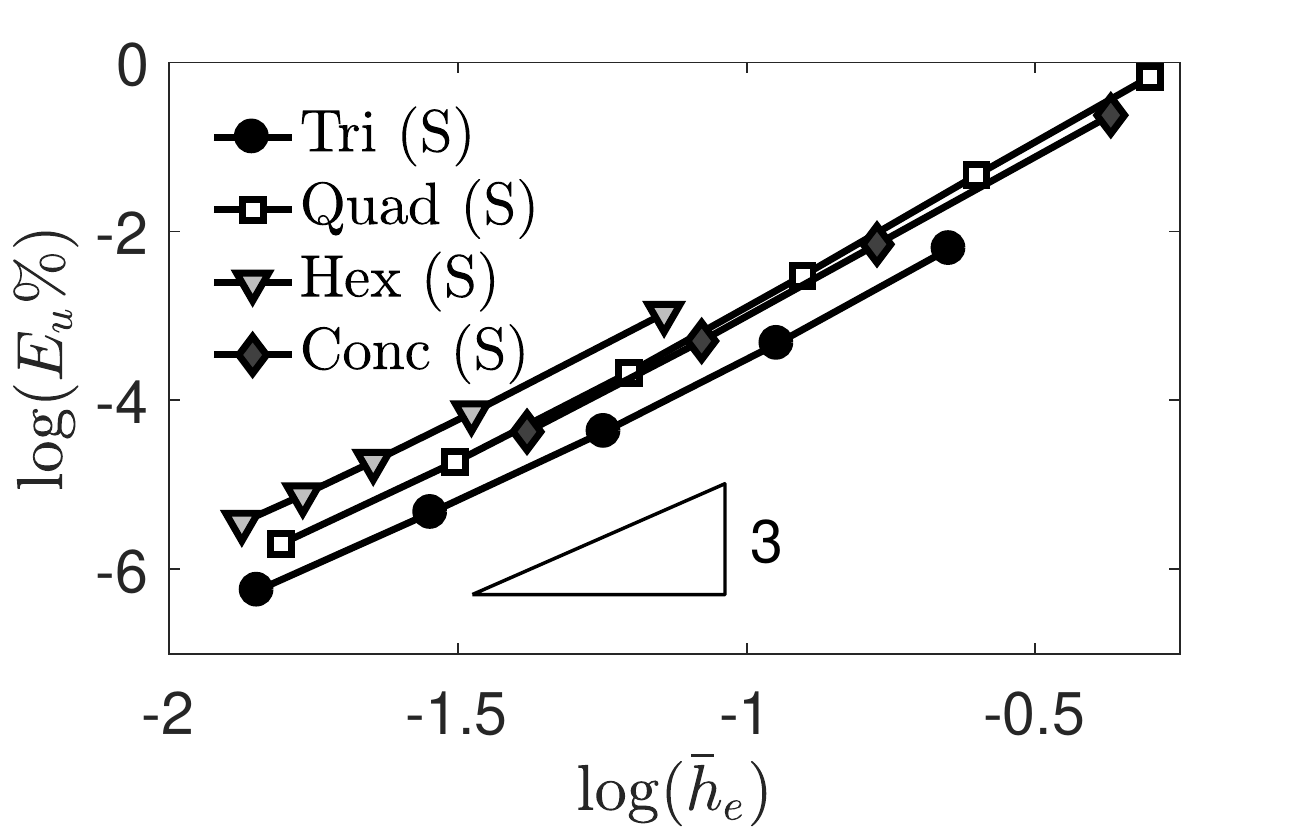}}
	\subfigure[]{\includegraphics[width=0.45\textwidth,trim = 0mm 0mm 0mm 0mm, clip]{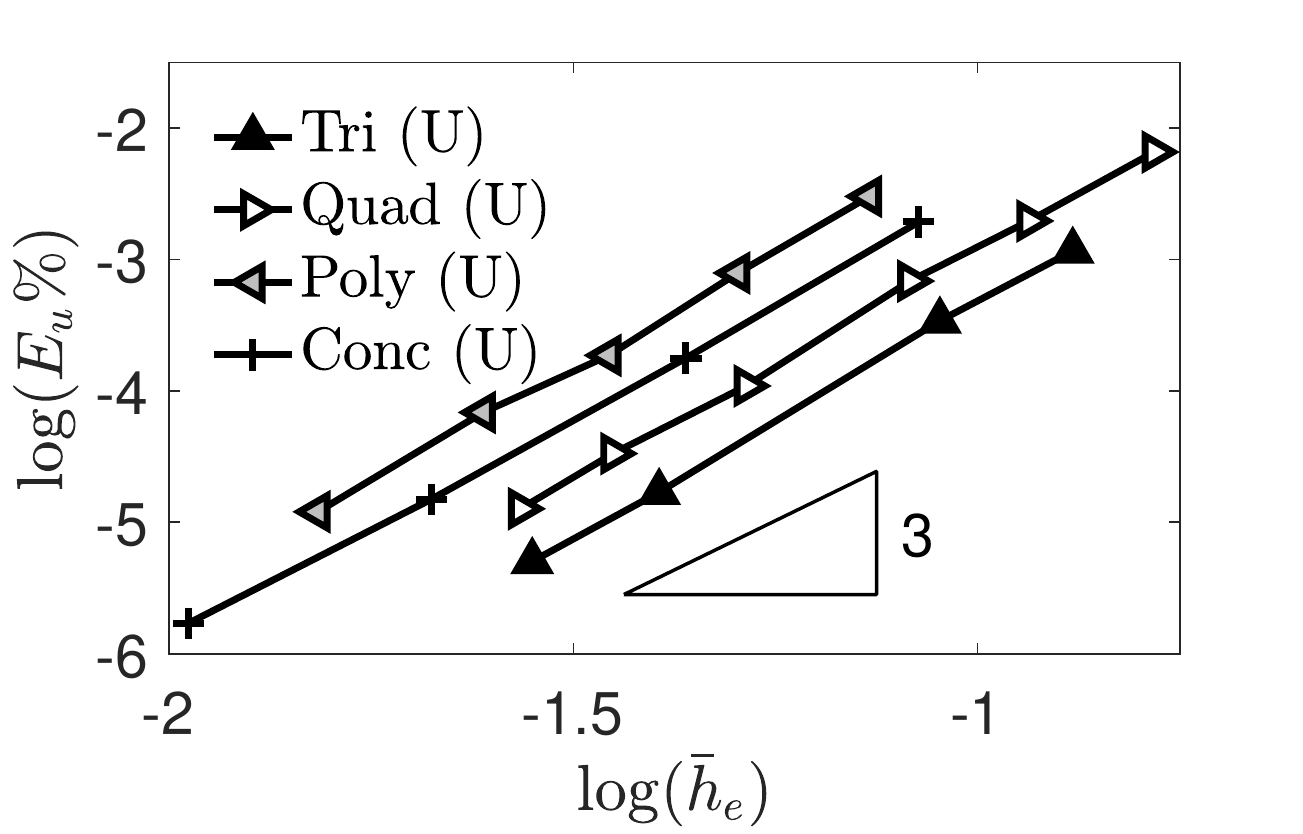}}\\
	\caption{$\bar{h}_e-$convergence results for Test $b$ on structured and unstructured meshes for $k=2$: (a) and (b) $E_{\bfsigma}$ error norm plots, (c) and (d) $E_{\bfsigma, \bdiv}$ error norm plots, (e) and (f) $E_{\bbu}$ error norm plots.}
	\label{fig:resuTestB2}
\end{figure}

\begin{figure}[h!]
	\centering
	\subfigure[]{\includegraphics[width=0.45\textwidth,trim = 0mm 0mm 0mm 0mm, clip]{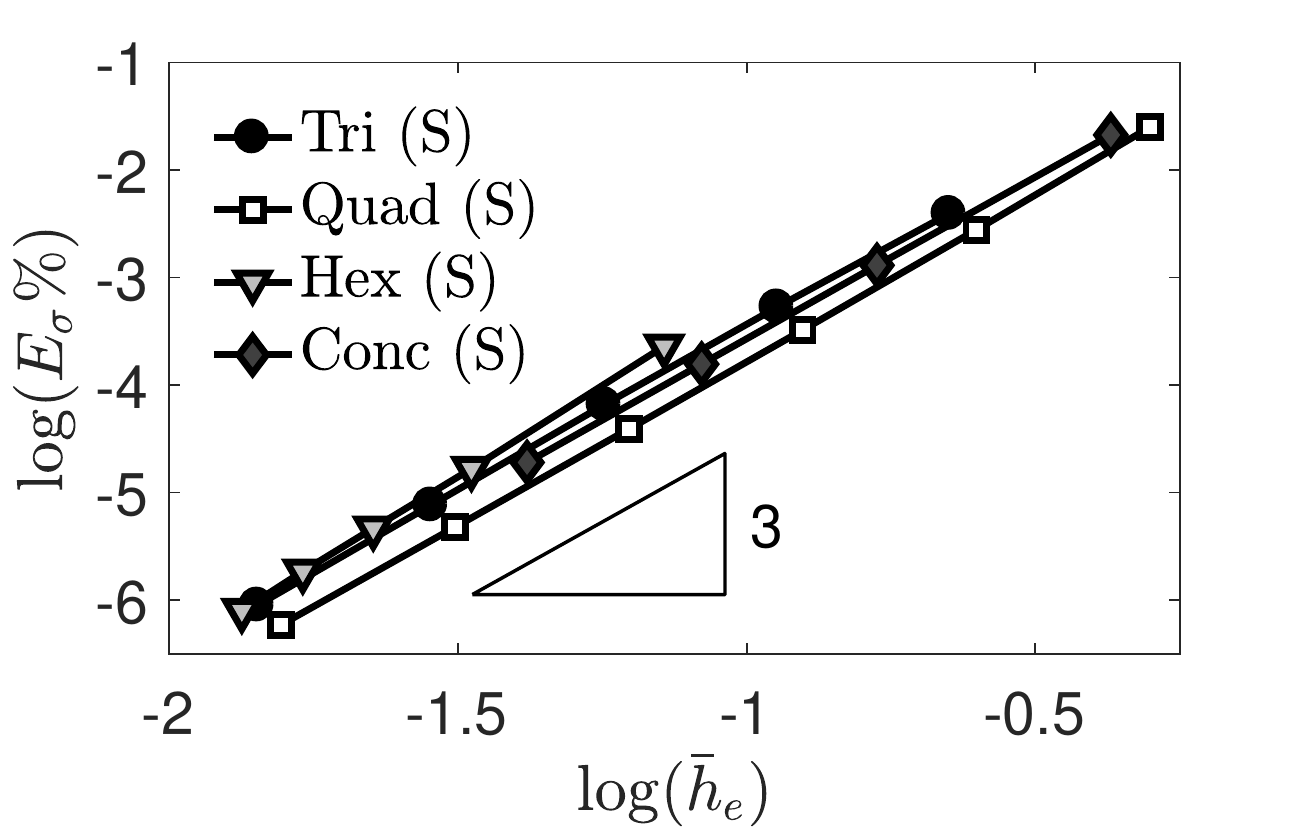}}
	\subfigure[]{\includegraphics[width=0.45\textwidth,trim = 0mm 0mm 0mm 0mm, clip]{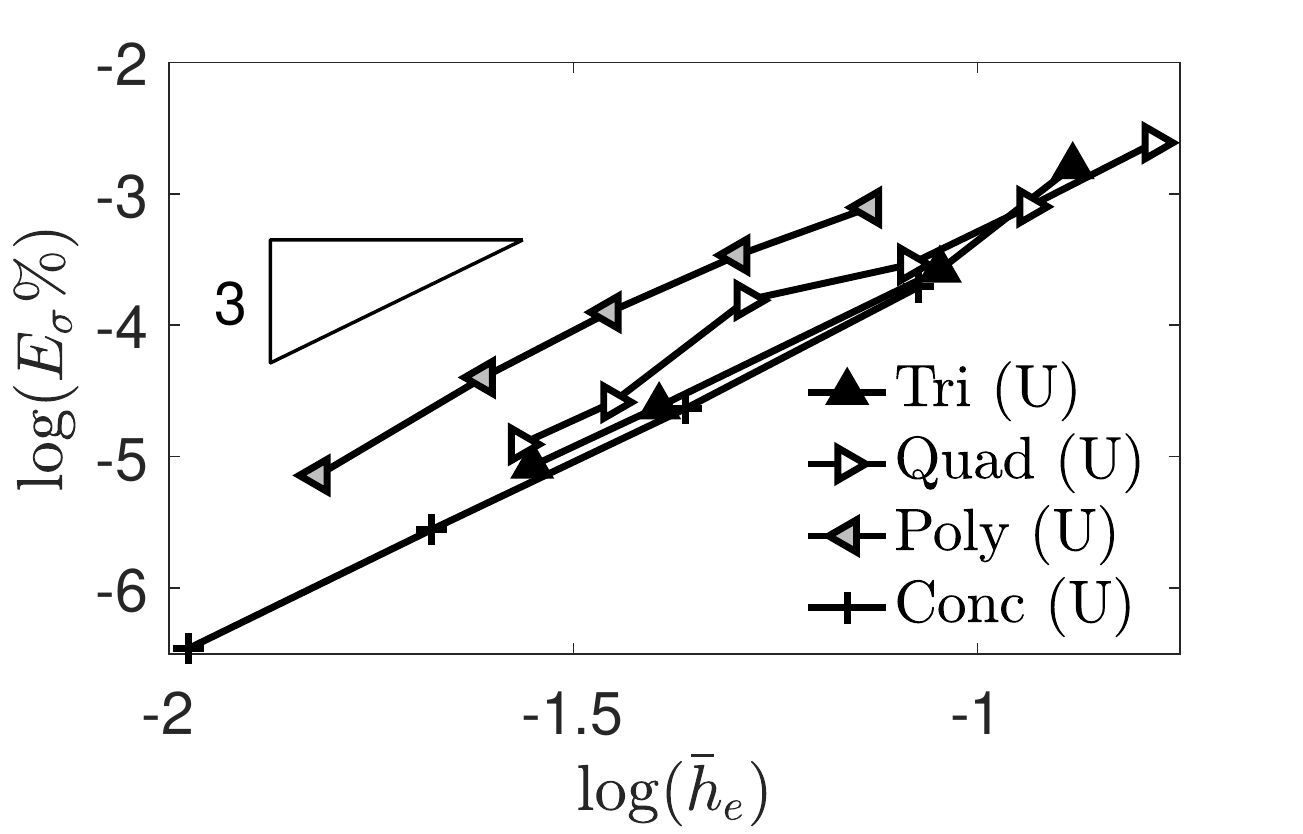}}\\
	\subfigure[]{\includegraphics[width=0.45\textwidth,trim = 0mm 0mm 0mm 0mm, clip]{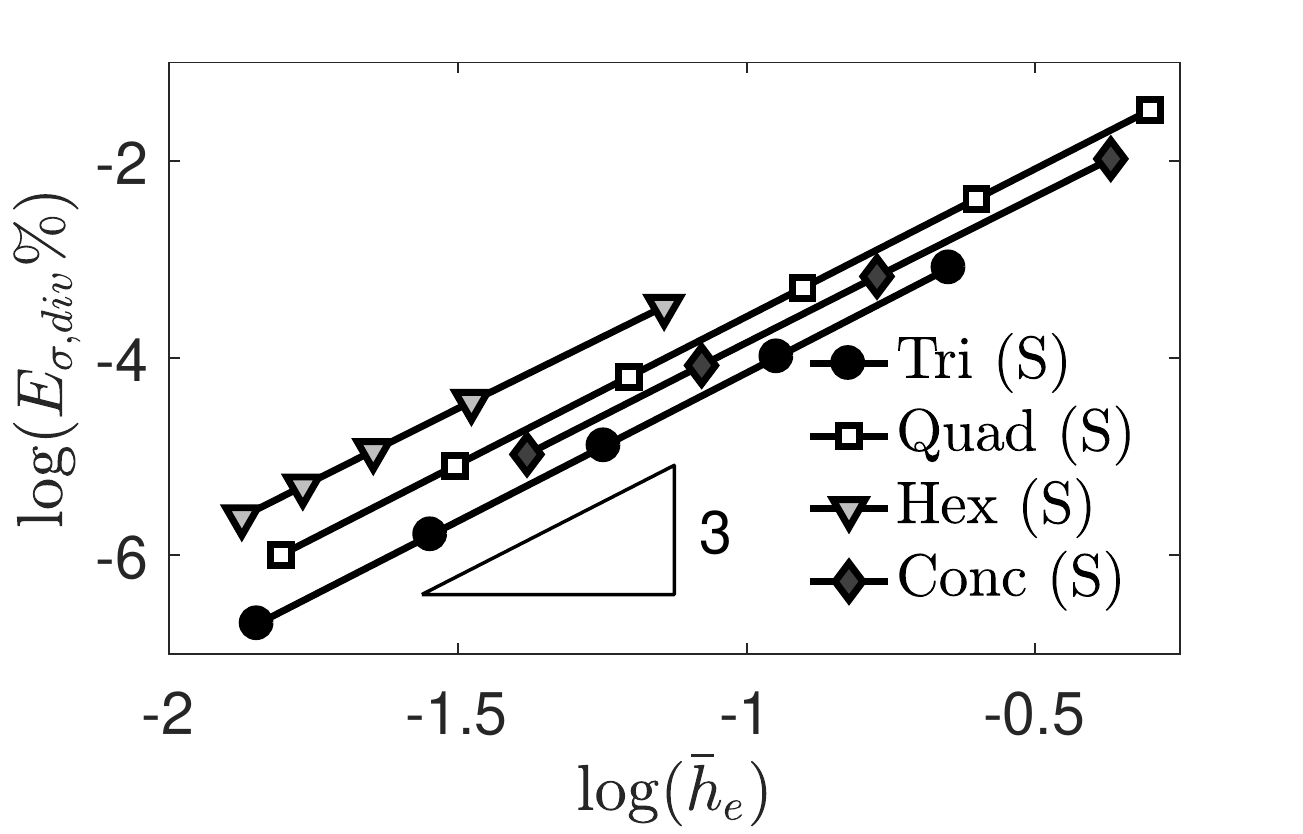}}
	\subfigure[]{\includegraphics[width=0.45\textwidth,trim = 0mm 0mm 0mm 0mm, clip]{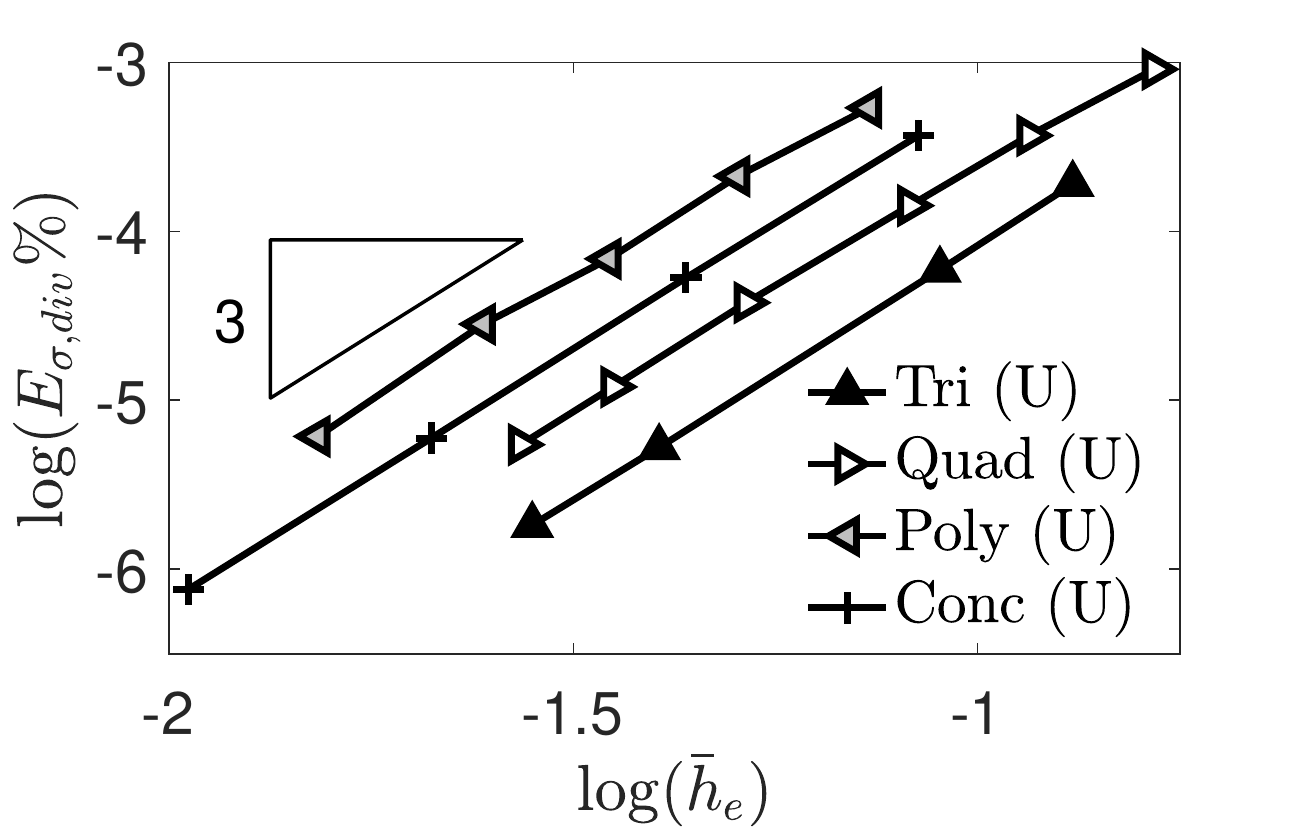}}\\
	\subfigure[]{\includegraphics[width=0.45\textwidth,trim = 0mm 0mm 0mm 0mm, clip]{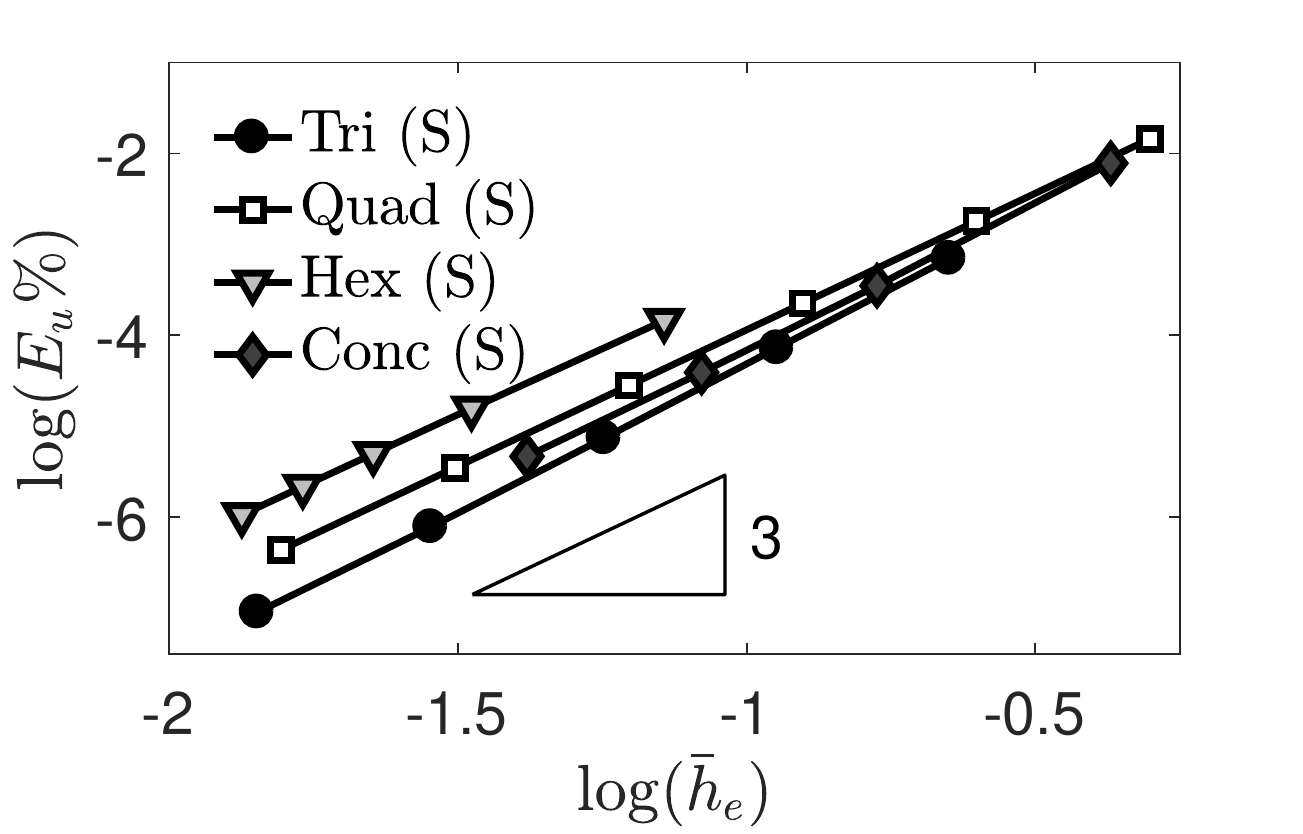}}
	\subfigure[]{\includegraphics[width=0.45\textwidth,trim = 0mm 0mm 0mm 0mm, clip]{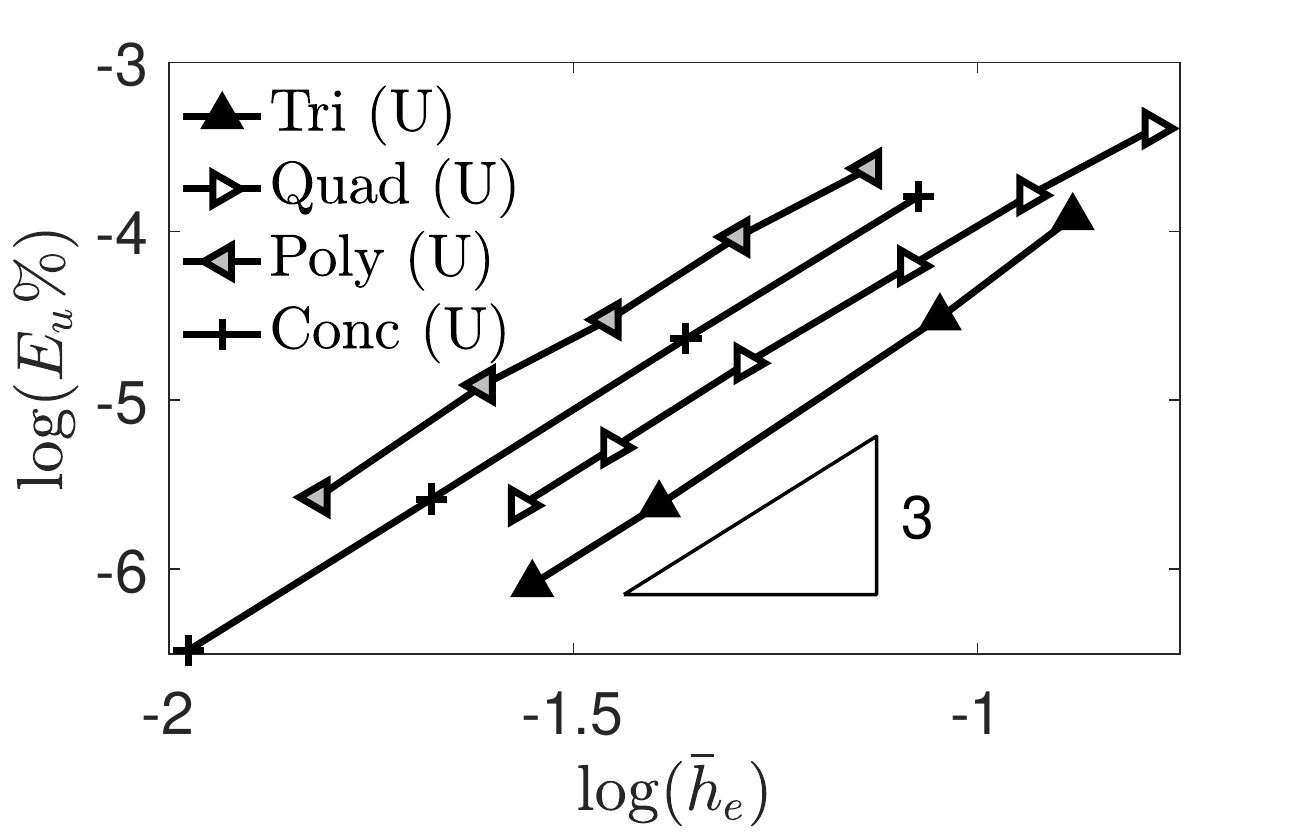}}\\
	\caption{$\bar{h}_e-$convergence results for Test $c$ on structured and unstructured meshes for $k=2$: (a) and (b) $E_{\bfsigma}$ error norm plots, (c) and (d) $E_{\bfsigma, \bdiv}$ error norm plots, (e) and (f) $E_{\bbu}$ error norm plots.}
	\label{fig:resuTestAvar2}
\end{figure}


\section{Conclusions}\label{s:conclusions}

We have presented a family of Virtual Element schemes for the linear elasticity 2D problem, described by the mixed Hellinger-Reissner variational principle. The approximated stresses are {\em a priori} symmetric and the corresponding tractions are continuous across the polygon inter-elements. 
We have proved that our methods are stable and optimal convergent, and we have reported some numerical tests that confirm the theoretical predictions.
A possible interesting evolution of this paper could be the extension of the present approach to the {\em three-dimensional} case.

\medskip

%


\bibliographystyle{amsplain}

\bibliography{general-bibliography,biblio,VEM}


\end{document}